\def\today{\number\day\space\ifcase\month\or   January\or February\or
   March\or April\or May\or June\or   July\or August\or September\or
   October\or November\or December\fi\   \number\year}
\theoremstyle{definition}
\newtheorem{lma}{Lemma}[section]
\newaliascnt{thmCt}{lma}
\newtheorem{thm}[thmCt]{Theorem}
\newaliascnt{corCt}{lma}
\newtheorem{cor}[corCt]{Corollary}
\newaliascnt{cnjCt}{lma}
\newaliascnt{propCt}{lma}
\newtheorem{prop}[propCt]{Proposition}
\newtheorem*{thm*}{Theorem}
\newtheorem*{cor*}{Corollary}
\newtheorem*{prop*}{Proposition}
\newcounter{theoremintro}
\newaliascnt{pgrCt}{lma}
\newaliascnt{dfCt}{lma}
\newtheorem{df}[dfCt]{Definition}
\newaliascnt{remCt}{lma}
\newtheorem{rem}[remCt]{Remark}
\newaliascnt{remsCt}{lma}
\newaliascnt{egCt}{lma}
\newtheorem{eg}[egCt]{Example}
\newaliascnt{egsCt}{lma}
\newaliascnt{exCt}{lma}
\newaliascnt{qstCt}{lma}
\newtheorem{qst}[qstCt]{Question}
\newaliascnt{pbmCt}{lma}
\newtheorem{pbm}[pbmCt]{Problem}
\newaliascnt{notaCt}{lma}
\newtheorem{nota}[notaCt]{Notation}
\newcommand{\beq}{\begin{equation}}
\newcommand{\eeq}{\end{equation}}
\newcommand{\beqa}{\begin{eqnarray*}}
\newcommand{\eeqa}{\end{eqnarray*}}
\newcommand{\bal}{\begin{align*}}
\newcommand{\eal}{\end{align*}}
\newcommand{\bi}{\begin{itemize}}
\newcommand{\ei}{\end{itemize}}
\newcommand{\be}{\begin{enumerate}}
\newcommand{\ee}{\end{enumerate}}
\newcommand{\ep}{\varepsilon}
\newcommand{\Z}{{\mathbb{Z}}}
\newcommand{\C}{{\mathbb{C}}}
\newcommand{\N}{{\mathbb{N}}}
\newcommand{\Hi}{{\mathcal{H}}}
\newcommand{\B}{{\mathcal{B}}}
\newcommand{\U}{{\mathcal{U}}}
\newcommand{\T}{{\mathbb{T}}}
\newcommand{\On}{{\mathcal{O}_n}}
\newcommand{\Onp}{{\mathcal{O}_n^p}}
\newcommand{\Ot}{{\mathcal{O}_2}}
\newcommand{\Otp}{{\mathcal{O}_2^p}}
\newcommand{\id}{{\mathrm{id}}}
\newcommand{\supp}{{\mathrm{supp}}}
\newcommand{\Aut}{{\mathrm{Aut}}}
\newcommand{\ca}{$C^*$-algebra}
\newcommand{\I}{\infty}
\title[]{A modern look at algebras of operators on \texorpdfstring{$L^{\MakeLowercase{p}}$}{Lp}-spaces}
\date{\today}
\thanks{This research was supported
by a Postdoctoral Research Fellowship
from the Humboldt Foundation, by the
Deutsche Forschungsgemeinschaft (SFB 878), and
by a DFG's eigene Stelle.}
\author[Eusebio Gardella]{Eusebio Gardella}
\address{Eusebio Gardella
Mathematisches Institut, Fachbereich Mathematik und Informatik der
Universit\"at M\"unster, Einsteinstrasse 62, 48149 M\"unster, Germany.}
\email{gardella@uni-muenster.de}
\urladdr{www.math.uni-muenster.de/u/gardella/}
\begin{document}

\begin{abstract}
The study of operator algebras on Hilbert spaces, and \ca s in 
particular, is one of the most active areas within Functional 
Analysis. A natural generalization of these is to replace Hilbert 
spaces (which are $L^2$-spaces) with 
$L^p$-spaces, for $p\in [1,\I)$. The study of such algebras 
of operators is notoriously more challenging, due to the very 
complicated geometry of 
$L^p$-spaces by comparison with Hilbert spaces.

We give a modern overview of a research area 
whose beginnings can be traced back to the 50's, and that has seen 
renewed attention in the last decade through the infusion of new 
techniques. The combination of these new ideas with old tools was the
key to answer some long standing questions. Among others, we
provide a description of
all unital contractive homomorphisms between algebras of 
$p$-pseudofunctions of groups. 
\end{abstract}

\maketitle

\tableofcontents

\renewcommand*{\thetheoremintro}{\Alph{theoremintro}}
\section{Introduction}

Given $p\in [1,\infty)$, we say that a Banach algebra $A$ is an \emph{$L^p$-operator algebra}
if it admits an isometric representation on an $L^p$-space. $L^p$-operator algebras have been
historically studied by example, starting with Herz's influential works \cite{Her_theory_1971}
on harmonic analysis on $L^p$-spaces. Given a locally compact group $G$, Herz studied
the Banach algebra $PF_p(G)\subseteq \B(L^p(G))$ generated by the left regular representation,
as well as its weak-$\ast$ closure $PM_p(G)$ and its double commutant $CV_p(G)$. The study of
the structure of these algebras has attracted the attention of a number of mathematicians in
the last decades (see, for example, \cite{Cow_predual_1998}, \cite{NeuRun_column_2009},
\cite{Der_book_2011}, \cite{Der_property_2009}, and \cite{DerFilMon_ideal_2004}), 
particularly in what refers to the
so-called ``convolvers and pseudomeasures'' problem, which asks whether $CV_p(G)=PM_p(G)$ for all
groups $G$ and for all $p\in [1,\I)$. We refer the interested reader to the recent paper \cite{DawSpr_convoluters_2019} 
for an excellent
survey on the problem as well as for a proof that $CV_p(G)=PM_p(G)$ when $G$ has the approximation
property.

$L^p$-operator algebras have recently seen renewed interest, thanks to the infusion of ideas and
techniques from operator algebras, particularly in the works of
Phillips \cite{Phi_analogs_2012, Phi_crossed_2013}. There, Phillips introduced and studied
the $L^p$-analogs $\Onp$ of the Cuntz algebras $\mathcal{O}_n$ from \cite{Cun_simple_1977}
(which are the case $p=2$), and of UHF-algebras.
The work of Phillips motivated other authors to study $L^p$-analogs of well-studied families
of \ca s. These classes include group algebras \cite{Phi_crossed_2013, GarThi_group_2015,
GarThi_representations_2019}; groupoid algebras \cite{GarLup_representations_2017};
crossed products by topological systems \cite{Phi_crossed_2013}; AF-algebras \cite{PhiVio_classification_2017,
GarLup_nonclassifiability_2016}; and graph algebras \cite{CorRod_operator_2017}. In these works,
an $L^p$-operator algebra is obtained from combinatorial or dynamical data, and properties
of the underlying data are related to properties of the algebra. 
Quite surprisingly, the lack of symmetry of the unit ball
of an $L^p$-space for $p\neq 2$ allows one to prove isomorphism results that show a stark
contrast with the case $p=2$.

More recent works have approached the study of $L^p$-operator algebras in a more
abstract and systematic way \cite{GarThi_banach_2015, GarThi_extending_2017, BlePhi_operator_2018}, 
showing that there is an interesting theory waiting to be unveiled, of which only very little is 
currently known.

These notes are an introduction to $L^p$-operator algebras, beginning in Section~2 with what is arguably 
the most fundamental result in the area: Lamperti's theorem (see \autoref{thm:Lamperti}), 
which characterizes the invertible isometries of an $L^p$-space for $p\in [1,\I)\setminus\{2\}$.
In Section~3 we define $L^p$-operator algebras, prove some elementary facts about them, and give 
some basic examples. The next four sections are devoted to the study of three very prominent 
classes of examples: group algebras (Sections~4 and~5); Cuntz and graph algebras (Section~6); and 
crossed products (Section~7). Finally, in Section~8 we discuss a recent result obtained in 
\cite{ChoGarThi_rigidity_2019}: $\mathcal{O}_2^p\otimes \mathcal{O}_2^p$ is not isomorphic to
$\mathcal{O}_2^p$ for $p\in [1,\I)\setminus\{2\}$ (while it is well-known that an isomorphism 
exists for $p=2$; see \cite{Ror_short_1994}). This answers a question of Phillips. 

Most of this work is expository, although the exposition given here 
is quite different from what has appeared elswehere. 
Throughout the document there are several results that have not appeared
in the literature before, we give new proofs of some known results, 
and the content of Section~5 is mostly new.
\newline

\textbf{Acknowledgements:} This manuscript grew out of notes from a 
course given at the \emph{Instituto de 
Matem\'atica y Estad\'istica Rafael Laguardia} of the \emph{Facultad de Ingenier\'ia, 
Universidad de la Rep\'ublica} in Montevideo, Uruguay. 
The author would like to thank all the participants of the course for their valuable
feedback and the stimulating learning atmosphere. 

\section{Lamperti's theorem}

In \cite{Lam_isometries_1958}, 
Lamperti gave a description of the linear isometries of the $L^p$-space of a $\sigma$-finite measure space, for $p\in[1,\infty)$ with $p\neq 2$.
This result had been earlier announced (without proof) by Banach 
for the unit interval with
the Lebesgue measure, and for this reason it is also sometimes referred to
as the ``Banach-Lamperti Theorem''.
In this section, which is based on Sections~2 and~3 of \cite{GarThi_isomorphisms_2018}, we generalize Lamperti's result by characterizing the surjective, 
linear isometries on the $L^p$-space of a localizable measure algebra; see \autoref{thm:Lamperti}.
The generalization from $\sigma$-finite spaces to localizable ones 
will allow us in the next sections to deal with locally compact groups
that are not $\sigma$-compact. 

\begin{df}\label{df:BooAlg}
A \emph{Boolean algebra} is a set $\mathcal{A}$ containing two 
distinguished elements $\emptyset$ and $I$, and with commutative, associative 
operations $\vee$ (disjoint union/orthogonal sum) and $\wedge$ (intersection/multiplication), 
and a notion of complementation $E\mapsto E^c$, satisfying the following properties:
\be
\item Idempotency: $E \vee E= E\wedge E=E$ for all $E\in \mathcal{A}$;
\item Absorption: $E \vee (E\wedge F)= E\wedge (E\wedge F)=E$ for all $E,F\in \mathcal{A}$;
\item Universality: for all $E\in\mathcal{A}$, we have 
\[E\vee \emptyset = E = E\wedge I, \ \ E\wedge \emptyset =\emptyset \ \ \mbox{ and }  
 \ \ E\vee I=I;\]
\item Complementation: $E\vee E^c=I$ and $E\wedge E^c=\emptyset$ for all $E\in\mathcal{A}$.
\ee
A \emph{homomorphism} between Boolean algebras is a function preserving all
the operations and the distinguished sets $\emptyset$ and $I$.

Given $E,F\in \mathcal{A}$, we write $E\leq F$ if $E\wedge F=E$, and we write 
$E\perp F$ if $E\wedge F=\emptyset$. 

We say that $\mathcal{A}$ is \emph{($\sigma$)-complete} 
if every nonempty countable subset of $\mathcal{A}$ has a supremum, 
and every
nonempty (countable) subset of $\mathcal{A}$ has an infimum.
\end{df}

The reader is referred to \cite{Fre_measure_2004} for a thorough treatment of Boolean algebras.
The most important example for the purposes of these notes is the following.

\begin{eg}\label{eg:QuotSigmaAlg}
Let $(X,\Sigma,\mu)$ be a measure space. Set 
$\mathcal{N}=\{E\in \Sigma\colon \mu(E)=0\}$, and let $\mathcal{A}$ denote the 
quotient $\Sigma/\mathcal{N}$. Then $\mathcal{A}$ is a $\sigma$-complete Boolean
algebra, with countable suprema given by union, and countable infima given by
intersection.
\end{eg}

There is a natural notion of measure on a Boolean algebra.

\begin{df}
Let $\mathcal{A}$ be a Boolean algebra. A map $\mu\colon \mathcal{A}\to [0,\I]$
is said to be a \emph{measure} if it satisfies $\mu(\emptyset)=0$ 
and $\mu(\bigvee_{n\in\N} E_n)=\sum_{n\in\N}\mu(E_n)$ whenever the $E_n$ are pairwise orthogonal.
We call a measure $\mu$ 
\emph{semi-finite} if for every $E\in\mathcal{A}$ there exists $F\leq E$
with $0<\mu(F)<\I$.

Finally, we say that the measured algebra $(\mathcal{A},\mu)$ is \emph{localizable}
if $\mathcal{A}$ is $\sigma$-complete and $\mu$ is semi-finite.
\end{df}

\begin{eg}
In \autoref{eg:QuotSigmaAlg}, the map 
$\mathcal{A}\to [0,\I]$ which $\mu$ naturally induces is a measure. 
Localizability can be easily
characterized in terms of the measure space $(X,\Sigma,\mu)$: for every
$E\in\Sigma$ with $0<\mu(E)$, there exists $F\in \Sigma$ with $F\subseteq E$
such that $0<\mu(F)<\I$. 
\end{eg}

Our next goal is to define $L^p$-spaces associated to a measured algebra.
We denote by $\B(\mathbb{R})$ the Boolean algebra of all Borel-measurable
subsets of $\mathbb{R}$.

\begin{df}
Let $\mathcal{A}$ be a Boolean algebra. A \emph{measurable real-valued function}
is a Boolean homomorphism $f\colon \B(\mathbb{R})\to \mathcal{A}$ which preserves
suprema of countable sets. For $t\in \mathbb{R}$, we write 
$\{f>t\}$ for the set $f((t,\I))$.
\end{df}

Note that two functions $f,g\colon \B(\mathbb{R})\to \mathcal{A}$ are equal
if and only if $\{f>t\}=\{g>t\}$ for all $t\in\mathbb{R}$. 

\begin{eg}
In the context of \autoref{eg:QuotSigmaAlg}, a measurable function 
$f\colon X\to\mathbb{R}$ is identified with the homomorphism
$\widetilde{f}\colon \B(\mathbb{R})\to \mathcal{A}$ given by 
$\widetilde{f}(E)=f^{-1}(E)+\mathcal{N}\in \mathcal{A}$.
\end{eg}

The set of all measurable functions on $\mathcal{A}$ is denoted 
$L^0_{\mathbb{R}}(\mathcal{A})$. We set 
$L^0(\mathcal{A})=L^0_{\mathbb{R}}(\mathcal{A})+iL^0_{\mathbb{R}}(\mathcal{A})$.
For $f\in L_{\mathbb{R}}^0(\mathcal{A})$, we define its integral by
\[\int f\ d\mu= \int_0^\I \mu(\{f>t\}) \ dt.\]
For $f\in L^0(\mathcal{A})$ and $p\in [1,\I)$, we set
$\|f\|_p^p=\int |f|^{1/p} d\mu$.

The context of measured Boolean algebras seems to be the most appropriate one to 
do measure theory. The notion of $\sigma$-finiteness for measure spaces is 
technically very useful, but virtually every result for $\sigma$-finite 
measure spaces can be proved in the more general context of localizable 
measures. One instance of this is the Radon-Nikodym theorem; in fact, 
localizability is \emph{characterized} by the validity of the Radon-Nikodym theorem.

\begin{thm}\label{thm:RadonNikodym}
Let $\mathcal{A}$ be a $\sigma$-complete Boolean algebra, and let $\mu$ and 
$\nu$ be measures on $\mathcal{A}$ with $\mu$ localizable. 
Then there exists a unique function 
$\frac{d\nu}{d\mu}\in L^0_{\mathbb{R}}(\mathcal{A})$, called the \emph{Radon-Nikodym
derivative} of $\nu$ with respect to $\mu$, satisfying 
\[\int f\ d\nu = \int f\frac{d\nu}{d\mu} \ d\mu\]
for all $f\in L^1(\nu)$.
\end{thm}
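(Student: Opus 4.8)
The plan is to run von Neumann's Hilbert-space argument in the totally finite case and then bootstrap to the localizable case by exhaustion and gluing. Before doing so I record the hypotheses that make the statement correct: one needs $\nu$ to be absolutely continuous with respect to $\mu$ (that is, $\mu(E)=0$ implies $\nu(E)=0$), for otherwise the displayed identity fails on the part of $\mathcal{A}$ where the two measures are mutually singular; and one needs $\nu$ to be semi-finite, so that the derivative is genuinely a member of $L^0_{\mathbb{R}}(\mathcal{A})$ (finite almost everywhere) rather than forced to take the value $+\infty$. Granting these, \emph{uniqueness} is the easy half. If $h_1,h_2$ both satisfy the identity, I would test it against the indicator of an element $F$ of finite $\mu$- and $\nu$-measure (these lie in $L^1(\nu)$ and, by semi-finiteness of $\mu$ and of $\nu$, are abundant), obtaining $\int_F h_1\,d\mu=\nu(F)=\int_F h_2\,d\mu$; choosing such $F$ below $\{h_1>h_2\}$ forces $\mu(F)=0$, and exhausting $\{h_1>h_2\}$ by such $F$ shows it is $\mu$-null, and symmetrically $h_1=h_2$.

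For existence I would first assume $\mu(I),\nu(I)<\infty$. Set $\lambda=\mu+\nu$ and work in the Hilbert space $L^2(\lambda)$. The functional $f\mapsto\int f\,d\nu$ is bounded there, since $\int|f|\,d\nu\le\int|f|\,d\lambda\le\lambda(I)^{1/2}\|f\|_{L^2(\lambda)}$ by Cauchy--Schwarz, so Riesz representation produces $g\in L^2(\lambda)$ with $\int f\,d\nu=\int fg\,d\lambda$ for all $f\in L^2(\lambda)$. Testing against the indicators of $\{g<0\}$ and of $\{g>1\}$ shows $0\le g\le 1$; rewriting the identity as $\int f(1-g)\,d\nu=\int fg\,d\mu$ and testing against the indicator of $\{g=1\}$ gives $\mu(\{g=1\})=0$, whence $\{g=1\}$ is $\nu$-null by absolute continuity. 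Setting $h=g/(1-g)$ on $\{g<1\}$ yields a candidate derivative. A routine bootstrap---indicators, then simple functions, then nonnegative functions via the monotone convergence built into the layer-cake definition $\int f\,d\mu=\int_0^{\infty}\mu(\{f>t\})\,dt$, then $f=f^+-f^-$---upgrades $\int f\,d\nu=\int fh\,d\mu$ from $f\in L^2(\lambda)$ to all $f\in L^1(\nu)$.

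To reach the general localizable case I would reduce to the finite one. Semi-finiteness of both $\mu$ and $\nu$ makes $\mu+\nu$ semi-finite, so by Zorn's lemma there is a maximal pairwise-orthogonal family $(F_i)_{i\in J}$ with $0<(\mu+\nu)(F_i)<\infty$; maximality together with semi-finiteness forces $\bigvee_{i\in J}F_i=I$. Each $F_i$ carries a totally finite problem, producing a derivative $h_i$ on $F_i$, and by the uniqueness already established these pieces are mutually consistent. I would then assemble them into a single $h\in L^0_{\mathbb{R}}(\mathcal{A})$ by checking that the prescription $\{h>t\}=\bigvee_{i\in J}\{h_i>t\}$, for each $t\in\mathbb{R}$, determines a countable-suprema-preserving Boolean homomorphism $h\colon\B(\mathbb{R})\to\mathcal{A}$; a final summation of the finite-case identities over the $F_i$, justified by monotone convergence, yields $\int f\,d\nu=\int fh\,d\mu$ for every $f\in L^1(\nu)$.

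The main obstacle is precisely this passage from local to global. Both the identity $\bigvee_{i\in J}F_i=I$ and the level sets $\bigvee_{i\in J}\{h_i>t\}$ that define $h$ involve suprema of a family $(F_i)$ that is in general \emph{uncountable}, hence not covered by $\sigma$-completeness; their existence is exactly what the completeness inherent in localizability supplies. This is the crux of the theorem and the reason it genuinely strengthens the classical $\sigma$-finite statement, in which a single countable exhaustion of $I$ makes the assembly automatic and no appeal to completeness beyond the countable is required. All remaining steps---the Hilbert-space construction and the measure-theoretic bootstrapping---are standard, so the weight of the proof rests on organizing the local derivatives into a coherent global one.
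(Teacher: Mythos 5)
Your argument is correct (granting the hypotheses you add), but it follows a genuinely different route from the paper's. The paper constructs the derivative directly and globally by its level sets: for each $t>0$ it takes $D_t$ to be the largest element of $\{E\in\mathcal{A}\colon \nu(E)>t\mu(E)\}$ --- a Hahn-decomposition-style argument for $\nu-t\mu$ --- checks that $t\mapsto D_t$ is order-preserving and order-continuous, and declares $\{\tfrac{d\nu}{d\mu}>t\}=D_t$; completeness of the measure algebra is consumed in forming each $D_t$ as a (possibly uncountable) supremum, and there is no decomposition into finite pieces at all. You instead run von Neumann's $L^2(\mu+\nu)$ Riesz-representation argument on totally finite pieces and then glue along a maximal orthogonal family, so that completeness is consumed only at the final assembly of $\bigvee_i F_i=I$ and $\bigvee_i\{h_i>t\}$. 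Each approach has its merits: the paper's is decomposition-free, exhibits the level sets of the derivative explicitly, and makes visible why full (not merely countable) completeness is the right hypothesis; yours is more modular, with each local piece handled by soft Hilbert-space functional analysis, at the price of the gluing step you correctly identify as the crux. Your observation that the statement as printed needs $\nu$ absolutely continuous with respect to $\mu$ (and $\nu$ semi-finite) to produce a genuine element of $L^0_{\mathbb{R}}(\mathcal{A})$ is well taken: in the paper's own construction a $\mu$-singular part of $\nu$ would sit inside $D_t$ for every $t$, so the resulting ``function'' would take the value $+\infty$ on a non-null element; this hypothesis is tacitly assumed but not stated in the paper.
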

\begin{proof}
We just give a rough idea of the proof.
Given $t\in (0,\I)$, the set
\[\{E\in\mathcal{A}\colon \nu(E)>t\mu(E)\}\]
has a largest element, which we denote by $D_t$. For $t\leq 0$, 
we set $D_t=\emptyset$.
One shows that the 
map $d\colon \mathbb{R}\to \mathcal{A}$ given by $d(t)=D_t$ for 
$t\in\mathbb{R}$, is order-preserving and 
order-continuous. Hence, there exists a function 
$\frac{d\nu}{d\mu}$ satisfying 
$\{\frac{d\nu}{d\mu}>t\}=D_t$ for all $t\in \mathbb{R}$. The 
identity in the statement is first verified for characteristic 
functions, and extended to $L^1(\nu)$ by density. 
\end{proof}

\begin{rem}\label{ex:ChangeVariables}
Let $(\mathcal{A},\mu)$ be a localizable measured Boolean algebra, and let 
$\varphi\in\Aut(\mathcal{A})$. 
Then
\[\int f\ d\mu = \int (\varphi\circ f) \frac{d(\mu\circ\varphi^{-1})}{d\mu} \ d\mu\]
for all $f\in L^0_{\mathbb{R}}(\mathcal{A})$. This identity is known as the ``change
of variables formula''. 
\end{rem}

The problem we will address in the rest of this section is to describe all
isometric isomorphisms (surjective isometries) between $L^p$-spaces, for
$p\in [1,\I)$. For $\ell^p(\{0,1\})$, this is easy to answer.

\begin{eg}
Endow $\{0,1\}$ with the counting measure. Below we give pictures of the 
unit balls of $\ell^p(\{0,1\})$, for $p=1,\I$ (in the real case). 

\begin{tikzpicture}
\draw[thick,->] (-2,0) -- (2,0) node[anchor=north west] {$x$};
\draw[thick,->] (0,-2) -- (0,2) node[anchor=south east] {$y$}; 
\draw (0,1.7) -- (1.7,0) -- (0,-1.7) -- (-1.7,0) -- (0,1.7);
\draw (0,-2.5) node {$\|(x,y)\|_1=1$};
\draw (-0.5,1.7) node {1};
\draw (1.7,-0.5) node {1};

\draw[thick,->] (4.5,0) -- (8.5,0) node[anchor=north west] {$x$};
\draw[thick,->] (6.5,-2) -- (6.5,2) node[anchor=south east] {$y$}; 
\draw (4.8,1.7) -- (8.2,1.7) -- (8.2,-1.7) -- (4.8,-1.7) -- (4.8,1.7);
\draw (6.5,-2.5) node {$\|(x,y)\|_\I=1$};
\draw (6,1.7) node {1};
\draw (8.35,-0.5) node {1};
\end{tikzpicture}

As $p$ goes from 1 to $\I$, the unit balls of the respective $\ell^p$
spaces change their shapes, going from the rhombus ($p=1$) to the square ($p=\I$), 
the case $p=2$ being a circle.\footnote{One 
should imagine the sides of the unit ball of $\ell^1(\{0,1]\})$ 
being ``blown out'' as $p$ grows.} 
The geometric description
of these unit balls reveals that the case $p=2$ has \emph{many}
more symmetries than the other ones. In particular, for $p\neq 2$, it is clear
that $\delta_0$ must be mapped either to a complex multiple of $\delta_0$ or 
to a complex multiple of $\delta_1$, and similarly for $\delta_1$. 
In other words, an invertible isometry in this
case has one of the following forms:
\[\begin{bmatrix}
    \lambda_1 & 0 \\
    0 & \lambda_2
  \end{bmatrix} \ \ \mbox{ or } \ \ \begin{bmatrix}
    0 & \lambda_1 \\
    \lambda_2 & 0
  \end{bmatrix}
\]
for $\lambda_1,\lambda_2\in S^1$. For $p=2$, rotations by angles other than
multiples of $\pi/2$ also give rise to invertible isometries 
(also known as unitary matrices), which are not isometric when regarded as 
maps on $\ell^p(\{0,1\})$ for $p\neq 2$.
\end{eg}

\begin{prop}\label{prop:IsometriesLp}
Let $(\mathcal{A},\mu)$ be a localizable measured Boolean algebra and let $p\in [1,\I)$.
\be
\item Set
\[\mathcal{U}(L^\I(\mu))=\{f\in L^0(\mathcal{A})\colon \{|f|>1\}=\{|f|<1\}=\emptyset\},\]
which is a group under multiplication. Then there is a group homomorphism
\[m\colon \U(L^\I(\mu))\to \mathrm{Isom}(L^p(\mu))\]
given by $m_f(\xi)=f\xi$ for all $f\in \U(L^\I(\mu))$ and all $\xi\in L^p(\mu)$.
\item There is a group homomorphism 
\[u\colon \Aut(\mathcal{A})\to \mathrm{Isom}(L^p(\mu))\]
given by
\[u_\varphi(\xi)=\varphi\circ\xi \left(\frac{d(\mu\circ\varphi^{-1})}{d\mu}\right)^{1/p}\]
for all $\varphi\in\Aut(\mathcal{A})$ and all $\xi\in L^p(\mu)$. 
\item Let $\varphi\in\Aut(\mathcal{A})$ and let $f\in \U(L^\I(\mu))$. Then
\[u_\varphi m_f u_\varphi^{-1}=m_{\varphi\circ f}.\]
In particular, there exists a group homomorphism 
\[\U(L^\I(\mu))\rtimes \Aut(\mathcal{A})\to \mathrm{Isom}(L^p(\mu)).\]
\item Given $f,g\in \U(L^\I(\mu))$ and $\varphi,\psi\in \Aut(\mathcal{A})$, we 
have 
\[\|m_fu_\varphi - m_gu_\psi\|=\max \{\|f-g\|_\I, 2-2\delta_{\varphi,\psi}\}.\]
\ee
\end{prop}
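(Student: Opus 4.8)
The plan is to obtain (1)--(3) by direct computation and then to reduce (4) to a single operator-norm estimate. Part (1) is immediate: if $f\in\U(L^\I(\mu))$ then $|f|=1$, so $|f\xi|^p=|\xi|^p$ and $m_f$ is isometric, while $m_fm_g=m_{fg}$ and $m_f^{-1}=m_{\overline f}$ show that $m$ is a group homomorphism. For (2), write $\rho_\varphi=\frac{d(\mu\circ\varphi^{-1})}{d\mu}$. Using $|\varphi\circ\xi|=\varphi\circ|\xi|$ and the change of variables formula of \autoref{ex:ChangeVariables} applied to $|\xi|^p$, one gets $\|u_\varphi\xi\|_p^p=\int(\varphi\circ|\xi|^p)\,\rho_\varphi\,d\mu=\int|\xi|^p\,d\mu=\|\xi\|_p^p$, so $u_\varphi$ is isometric. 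After using $\varphi\circ(\psi\circ\xi)=(\varphi\psi)\circ\xi$ and that $\varphi$ is multiplicative on $L^0(\mathcal{A})$, the identity $u_\varphi u_\psi=u_{\varphi\psi}$ reduces to the cocycle relation $(\varphi\circ\rho_\psi)\,\rho_\varphi=\rho_{\varphi\psi}$; this is the chain rule for Radon--Nikodym derivatives together with the transformation rule $\frac{d(\lambda\circ\varphi^{-1})}{d(\mu\circ\varphi^{-1})}=\varphi\circ\frac{d\lambda}{d\mu}$, both consequences of \autoref{thm:RadonNikodym} and \autoref{ex:ChangeVariables}. I expect this cocycle identity to be the only non-formal point of (1)--(2).

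For (3) I would evaluate $u_\varphi m_f u_\varphi^{-1}$ on $\xi$ by applying $u_{\varphi^{-1}}$, then $m_f$, then $u_\varphi$; using multiplicativity of $\varphi$ on functions, the relation $\varphi\circ(\varphi^{-1}\circ\xi)=\xi$, and the cancellation $(\varphi\circ\rho_{\varphi^{-1}})\,\rho_\varphi=\rho_{\id}=1$ (the cocycle relation with $\psi=\varphi^{-1}$), everything collapses to $(\varphi\circ f)\,\xi=m_{\varphi\circ f}\xi$. Granting (3), the final assertion is purely formal: the computation $m_fu_\varphi\cdot m_gu_\psi=m_{f\cdot(\varphi\circ g)}\,u_{\varphi\psi}$ is exactly the multiplication rule of $\U(L^\I(\mu))\rtimes\Aut(\mathcal{A})$ for the action $\varphi\cdot g=\varphi\circ g$, so $(f,\varphi)\mapsto m_fu_\varphi$ is a homomorphism.

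The content of the proposition is (4). Since $m_fu_\varphi$ is an invertible isometry, $\|m_fu_\varphi-m_gu_\psi\|=\|\id-(m_fu_\varphi)^{-1}m_gu_\psi\|$, and using (2), (3) and $m_f^{-1}=m_{\overline f}$ one rewrites $(m_fu_\varphi)^{-1}m_gu_\psi=m_hu_\theta$ with $\theta=\varphi^{-1}\psi$ and $h=\varphi^{-1}\circ(\overline f g)\in\U(L^\I(\mu))$; note that $\theta=\id$ precisely when $\varphi=\psi$. If $\varphi=\psi$ then $\id-m_h=m_{1-h}$, whose norm is the operator norm of a multiplication operator on $L^p(\mu)$, namely $\|1-h\|_\I$ (the bound $\le$ is clear, and $\ge$ follows by testing on normalized indicators of finite-measure sets on which $|1-h|$ is nearly its essential supremum, which exist by semi-finiteness). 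Since $\varphi^{-1}$ preserves the $L^\I$-norm and $1-h=\varphi^{-1}\circ(\overline f(f-g))$, this equals $\|f-g\|_\I=\max\{\|f-g\|_\I,0\}$, as required.

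The main obstacle is the lower bound when $\varphi\neq\psi$, i.e.\ proving $\|\id-m_hu_\theta\|=2$ for $\theta\neq\id$; the upper bound $2$ holds since this is a difference of isometries. I would first produce a nonzero $E\in\mathcal{A}$ with $E\perp\theta(E)$: from any $E_0$ with $\theta(E_0)\neq E_0$, at least one of $E_0\SM\theta(E_0)$ or $E_0\SM\theta^{-1}(E_0)$ is nonzero, and either such set $D$ satisfies $\theta(D)\perp D$; by semi-finiteness we may assume $0<\mu(E)<\I$. Testing $\id-m_hu_\theta$ on the normalized indicator of $E$ gives $2^{1/p}$ by disjointness of supports, which already settles $p=1$ but is too weak for $p>1$. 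To reach $2$ I would build a long wandering tower $E,\theta(E),\dots,\theta^{n-1}(E)$ of pairwise disjoint sets, on which $m_hu_\theta$ acts as a weighted shift $S$; choosing coefficients $c_0=1$, $c_k=-w_{k-1}c_{k-1}$ that alternate so as to align $\id$ and $S$ destructively, the unit vector $\xi=\sum c_k\zeta_k$ (with $\zeta_k$ the normalized indicator of $\theta^k(E)$) yields $\|(\id-S)\xi\|_p^p/\|\xi\|_p^p=(1+(n-1)2^p)/n\to 2^p$, so the norm tends to $2$. I expect the two delicate points to be (a) the phase bookkeeping absorbing $h$ and the Radon--Nikodym cocycle so that $m_hu_\theta$ is genuinely a weighted shift along the tower, and (b) securing arbitrarily long wandering towers, where the orbit structure of $\theta$ and the failure of the Hilbertian parallelogram law for $p\neq 2$ are essential; automorphisms admitting only short orbits, such as involutions, are exactly the case where this step needs separate treatment.
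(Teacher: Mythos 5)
Parts (1)--(3) of your proposal, together with the case $\varphi=\psi$ of (4), are correct and essentially the paper's argument: the paper dismisses (1)--(3) as routine, and your cocycle identity $(\varphi\circ\rho_\psi)\rho_\varphi=\rho_{\varphi\psi}$ for the Radon--Nikodym derivatives is exactly the point it leaves implicit. The real issue is the lower bound $2$ in the case $\varphi\neq\psi$, and here you have put your finger on a genuine problem. As you observe, testing on the normalized indicator of a set $F_0$ with $\varphi(F_0)\perp\psi(F_0)$ only yields $2^{1/p}$, because for disjointly supported $\eta_1,\eta_2$ one has $\|\eta_1-\eta_2\|_p^p=\|\eta_1\|_p^p+\|\eta_2\|_p^p$ rather than $\|\eta_1-\eta_2\|_p=\|\eta_1\|_p+\|\eta_2\|_p$; the paper's own proof asserts the latter in its displayed estimate and is therefore incorrect for $p>1$. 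Your wandering-tower repair is the natural next move, but the case you flag as needing ``separate treatment'' --- automorphisms with short orbits --- is not a technicality: it is where the statement itself fails.

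Concretely, take $\mathcal{A}$ to be the power set of $\{0,1\}$ with counting measure, $\varphi$ the flip, $\psi=\mathrm{id}$, $f=(1,1)$ and $g=(1,-1)$. Then
\[
m_fu_\varphi-m_gu_\psi=\begin{bmatrix}0&1\\1&0\end{bmatrix}-\begin{bmatrix}1&0\\0&-1\end{bmatrix}=\begin{bmatrix}-1&1\\1&1\end{bmatrix},
\]
and the norm of this matrix on $\ell^p_2$ equals $2^{\max\{1/p,\,1/p'\}}$: the upper bound follows from Riesz--Thorin interpolation between the exact values $2$ at $p=1,\infty$ and $\sqrt2$ at $p=2$, and the lower bound from testing on $(1,1)$ (for $p\geq 2$) and on $(1,0)$ (for $p\leq 2$). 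For every $p\in(1,\infty)$ this is strictly less than $2$, and also strictly less than $\|f-g\|_\infty=2$, so the equality asserted in part (4) is false as stated for $p>1$, and neither your argument nor the paper's can be completed. What is true, and what your indicator computation already proves, is the lower bound $\|m_fu_\varphi-m_gu_\psi\|\geq 2^{1/p}$ whenever $\varphi\neq\psi$ (with equality of the two sides of (4) holding for $p=1$, and the $\varphi=\psi$ case holding for all $p$). This weaker estimate is all that is needed downstream --- it already gives the discreteness of $G$ inside $\mathrm{Isom}(F^p_\lambda(G))$ in \autoref{thm:UnitaryGpFpG} --- so the right fix is to restate part (4) accordingly rather than to try to force the constant $2$.
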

\begin{proof}
Parts~(1), (2) and (3) are routine. For example, to prove in~(2) that 
$u_\varphi$ is isometric, we let $\xi\in L^p(\mu)$ be given and use \autoref{ex:ChangeVariables} at 
the third step to get
\begin{align*}
\|u_\varphi(\xi)\|_p^p&= \int\left|(\varphi\circ\xi)\left(\frac{d(\mu\circ\varphi^{-1})}{d\mu}\right)^{1/p}\right|^{p}d\mu\\
&=\int|\varphi\circ\xi|^p\frac{d(\mu\circ\varphi^{-1})}{d\mu}\ d\mu\\
&= \int |\xi|^p d\mu\\
&= \|\xi\|_p^p.
\end{align*}

We prove (4). 
Fix $f,g\in\mathcal{U}(L^\infty(\mu))$ and 
$\varphi,\psi\in\Aut(\mathcal{A})$.

Suppose that $\varphi\neq\psi$; we will show that
$\| m_f u_\varphi - m_g u_\psi \|=2$.
Choose a $E\in\mathcal{A}$ with $\varphi(E)\neq\psi(E)$.
If $\varphi(E)\setminus\psi(E)\neq \emptyset$, 
set $F=\varphi^{-1}(\varphi(E)\setminus\psi(E))$.
Otherwise, if $\psi(E)\setminus\varphi(E)\neq \emptyset$, set $F=\psi^{-1}(\psi(E)\setminus\varphi(E))$.
In either case, we have $F\neq \emptyset$ and $\varphi(F)\wedge\psi(F)=0$.
Using that $\mu$ is localizable, choose $F_0\in\mathcal{A}$ with 
$0\neq F_0\leq F$ and $\mu(F_0)<\infty$.
Set $\xi=\tfrac{1}{\mu(F_0)^{1/p}}\mathbbm{1}_{F_0}$, which belongs to $L^p(\mu)$ and satisfies $\|\xi\|_p=1$.
By construction, $(m_f u_\varphi)(\xi)$ and $(m_g u_\psi)(\xi)$ have disjoint supports, whence
\[
\| m_f u_\varphi - m_g u_\psi \|
\geq \| ( m_f u_\varphi - m_g u_\psi)(\xi) \|_p
= \| (m_f u_\varphi)(\xi) \|_p + \|(m_g u_\psi)(\xi) \|_p = 2.
\]
On the other hand, we have $\| m_f u_\varphi - m_g u_\psi \|\leq \| m_f u_\varphi\| + \| m_g u_\psi \|=2$, as desired.

We now assume that $\varphi=\psi$.
Then $\|m_f u_\varphi - m_g u_\psi \|= \|m_f-m_g\|$.
Moreover, $m_f-m_g=m_{f-g}$, and it is straightforward to 
verify that this operator has norm
$\|f-g\|_\infty$, which is at most 2. This finishes the proof.
\end{proof}


The main result of this section, \autoref{thm:Lamperti}, asserts that 
for $p\neq 2$, the only isometries of $L^p(\mu)$ for localizable $\mu$ are 
the ones described in \autoref{prop:IsometriesLp}. In other words,
the homomorphism from part~(3) of \autoref{prop:IsometriesLp} is 
an isomorphism.

We need a preparatory lemma.

\begin{lma}\label{lma:Clarkson}
Let $(\mathcal{A},\mu)$ be a measured Boolean algebra, let $p\in [1,\I)$,
and let $\xi,\eta\in L^p(\mu)$.
\be
\item For $2\leq p$, we have
\[\|\xi+\eta\|_p^p+ \|\xi-\eta\|_p^p \geq 2\left(\|\xi\|_p^p+\|\eta\|_p^p\right),\]
and equality holds for $p\neq 2$ if and only if $\xi\eta=0$.
\item For $p\leq 2$, we have
\[\|\xi+\eta\|_p^p+ \|\xi-\eta\|_p^p \leq 2\left(\|\xi\|_p^p+\|\eta\|_p^p\right),\]
and equality holds for $p\neq 2$ if and only if $\xi\eta=0$.
\item If $p\neq 2$ and $T\colon L^p(\mu)\to L^p(\mu)$ is isometric, then
$T(\xi)T(\eta)=0$ whenever $\xi\eta=0$. (In other words, $T$ is ``disjointness preserving''.)
\ee
\end{lma}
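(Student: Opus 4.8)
The plan is to reduce all three parts to a single pointwise inequality for complex scalars. Since $\|\zeta\|_p^p=\int|\zeta|^p\,d\mu$, parts~(1) and~(2) should follow at once by integrating, against $\mu$, the scalar estimate applied to the values $a=\xi$ and $b=\eta$: for $a,b\in\C$ one wants $|a+b|^p+|a-b|^p\geq 2(|a|^p+|b|^p)$ when $p\geq 2$, with the reverse inequality when $p\leq 2$. Thus the real content is this scalar inequality together with the characterization of its equality case.

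To prove it I would exploit the parallelogram law $|a+b|^2+|a-b|^2=2(|a|^2+|b|^2)$, which holds in $\C$. Setting $q=p/2$, $x=|a+b|^2$, $y=|a-b|^2$ and $\alpha=|a|^2$, $\beta=|b|^2$, this identity reads $x+y=2(\alpha+\beta)$, and the target inequality becomes the two-step chain $x^q+y^q\geq 2\bigl(\tfrac{x+y}{2}\bigr)^q=2(\alpha+\beta)^q\geq 2(\alpha^q+\beta^q)=2(|a|^p+|b|^p)$, valid for $p\geq 2$. The first inequality is Jensen's inequality for $t\mapsto t^q$, which is convex for $q\geq 1$; the second is the superadditivity $(\alpha+\beta)^q\geq\alpha^q+\beta^q$ for $q\geq 1$. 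For $p\leq 2$ one has $q\leq 1$, the map $t\mapsto t^q$ is concave and subadditive, and both inequalities reverse, giving part~(2).

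For the equality statements, with $p\neq 2$ and hence $q\neq 1$, equality in the chain forces equality at both links. Since $t\mapsto t^q$ is \emph{strictly} convex (resp.\ concave) for $q\neq 1$, the first link is an equality only if $x=y$, and since it is strictly super-\ (resp.\ sub-)additive, the second link forces $\alpha=0$ or $\beta=0$, i.e.\ $ab=0$; conversely $ab=0$ makes both links equalities. Hence the scalar equality holds precisely when $ab=0$. Passing back to $L^p$ is then routine: the scalar inequality holds with one fixed direction for the integrands, so equality of the two integrals forces equality in the integrands, and the scalar equality case just established then yields $\xi\eta=0$.

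Part~(3) is a formal consequence of this rigidity. If $\xi\eta=0$, then~(1) or~(2) holds with equality for the pair $(\xi,\eta)$. Because $T$ is a linear isometry, $\|T\xi\pm T\eta\|_p=\|\xi\pm\eta\|_p$ and $\|T\xi\|_p=\|\xi\|_p$, $\|T\eta\|_p=\|\eta\|_p$, so the same equality holds for the pair $(T\xi,T\eta)$; the equality case then gives $T\xi\,T\eta=0$. The one genuinely delicate point is the scalar inequality and its sharp equality condition, and the parallelogram-law substitution is precisely what trivializes it: it converts the problem into the elementary strict convexity and strict super-/subadditivity of $t\mapsto t^{p/2}$, from which both the inequality and the disjointness conclusion fall out.
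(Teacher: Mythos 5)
Your proof is correct and follows essentially the same route as the paper: the displayed chain in the paper's proof is exactly your two-step estimate, namely Jensen's inequality for the convex (resp.\ concave) function $t\mapsto t^{p/2}$ applied through the parallelogram identity $|a+b|^2+|a-b|^2=2(|a|^2+|b|^2)$, followed by super-/subadditivity of $t\mapsto t^{p/2}$, with the equality analysis and the deduction of part~(3) from the isometry identical in both arguments. The only difference is that you spell out what the paper dismisses as ``standard results.''
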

\begin{proof}
(1). Set $\phi(t)=t^p$ for $t\in \mathbb{R}$.
Then $\phi(\sqrt(t))$ is convex, and by standard results, we have
\[\phi^{-1}\left(\frac{\phi(|z+w|)+\phi(|z-w|)}{2}\right)
 \geq (|z|^2+|w|^2)^{1/2}\geq \phi^{-1}(\phi(|z|)+\phi(|w|)),
\]
for all $z,w\in \C$. Moreover, when $\phi$ is strictly convex, equality holds
if and only if $zw=0$. Since $\phi^{-1}$ is increasing, the result follows by
integration.

(2). This is entirely analogous to (1).

(3). Suppose $p\neq 2$. If $\xi\eta=0$, then
\begin{align*}
\|T(\xi)+T(\eta)\|_p^p+\|T(\xi)-T(\eta)\|_p^p&=\|\xi+\eta\|_p^p+ \|\xi-\eta\|_p^p \\
&= 2\left(\|\xi\|_p^p+\|\eta\|_p^p\right)\\
&= 2\left(\|T(\xi)\|_p^p+\|T(\eta)\|_p^p\right). 
\end{align*}
It follows that $T(\xi)T(\eta)=0$, as desired.
\end{proof}

We have now arrived at Lamperti's theorem.

\begin{thm}\label{thm:Lamperti}
Let $(\mathcal{A},\mu)$ be a localizable measured Boolean algebra, 
let $p\in [1,\I)\setminus\{2\}$ and let $T\colon L^p(\mu)\to L^p(\mu)$
be an invertible isometry. Then there exist unique $\varphi\in\Aut(\mathcal{A})$
and $f\in \U(L^\I(\mu))$ such that $T=m_fu_\varphi$. 
\end{thm}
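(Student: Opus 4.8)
The plan is to let the disjointness-preservation of \autoref{lma:Clarkson}(3) do all the heavy lifting: from it I will reconstruct first the set automorphism $\varphi$ and then the multiplier $f$. Uniqueness, by contrast, is immediate. If $m_f u_\varphi = m_g u_\psi$, then \autoref{prop:IsometriesLp}(4) gives $\max\{\|f-g\|_\I,\, 2 - 2\delta_{\varphi,\psi}\} = 0$, which forces $\varphi = \psi$ (otherwise the second entry equals $2$) and then $\|f-g\|_\I = 0$, i.e. $f = g$. So the entire content is the existence statement.

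First I would construct $\varphi$. For $E \in \mathcal{A}$ with $\mu(E) < \I$, the element $\mathbbm{1}_E$ lies in $L^p(\mu)$, and I set $\varphi(E) = \supp(T\mathbbm{1}_E)$, the support being computed in the $\sigma$-complete algebra $\mathcal{A}$ as $\bigvee_n \{|T\mathbbm{1}_E| > 1/n\}$. Since $T$ is isometric, $\varphi(E) = \emptyset$ if and only if $E = \emptyset$. Disjointness preservation gives $\varphi(E) \perp \varphi(F)$ whenever $E \perp F$, and linearity of $T$ together with the disjointness of the two supports yields $\varphi(E \vee F) = \varphi(E) \vee \varphi(F)$ for orthogonal $E,F$; in particular $\varphi$ is order-preserving on finite-measure sets. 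Running the same construction for the (also disjointness-preserving) isometry $T^{-1}$ produces an order-preserving inverse, so $\varphi$ is an order isomorphism on the finite-measure part, and hence preserves $\wedge$ and relative complements there. Using semi-finiteness to exhaust an arbitrary $E \in \mathcal{A}$ by finite-measure subsets, and the completeness of $\mathcal{A}$ to form the relevant suprema, I extend $\varphi$ to all of $\mathcal{A}$; the extended map is bijective, order-preserving, and has an order-preserving inverse, so it lies in $\Aut(\mathcal{A})$.

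Next I peel off $\varphi$ and extract the multiplier. Consider $S = u_{\varphi}^{-1} T$, which is again an invertible, disjointness-preserving isometry of $L^p(\mu)$. Since $u_\varphi$ sends functions supported on $E$ to functions supported on $\varphi(E)$, the operator $S$ now satisfies $\supp(S\mathbbm{1}_E) \leq E$ for every finite-measure $E$. A short computation using $S\mathbbm{1}_F = S\mathbbm{1}_{E \wedge F} + S\mathbbm{1}_{F\SM E}$ and the disjointness of these two supports shows that $S$ commutes with multiplication by $\mathbbm{1}_E$ on indicators, hence on all simple functions. Consequently, on each finite-measure set $E_0$ the operator $S$ agrees with multiplication by the single function $f_0 = S\mathbbm{1}_{E_0}$: indeed for $E \leq E_0$ one has $S\mathbbm{1}_E = \mathbbm{1}_E\, S\mathbbm{1}_{E_0} = f_0\mathbbm{1}_E$, and simple functions are dense in $L^p$ of $E_0$. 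These local symbols are consistent on overlaps, so by semi-finiteness and completeness they patch to a single $f \in L^0(\mathcal{A})$ with $S = m_f$. The isometry condition $\int_E |f|^p\, d\mu = \mu(E)$ for all finite-measure $E$ then forces $|f| = 1$ almost everywhere, that is, $f \in \U(L^\I(\mu))$. Finally $T = u_\varphi m_f$, and \autoref{prop:IsometriesLp}(3) rewrites this as $T = m_{\varphi\circ f}\, u_\varphi$, which is the desired decomposition.

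The main obstacle is the bookkeeping at the two points where I leave the finite-measure world. Disjointness preservation is a statement about genuine vectors of $L^p(\mu)$, so $\varphi$ and the local symbols $f_0$ are directly defined only on finite-measure sets; promoting $\varphi$ to an honest automorphism of all of $\mathcal{A}$ and gluing the $f_0$ into a globally defined $f$ both rely essentially on localizability (semi-finiteness to approximate arbitrary sets from below by sets of finite measure, and completeness to assemble the suprema), and one must verify that no structure---bijectivity and operation-preservation for $\varphi$, and the multiplicative identity $S = m_f$---is lost in the limit. By contrast, the analytic heart of the matter, namely that an invertible isometry is disjointness preserving, is already supplied by \autoref{lma:Clarkson}, and that is exactly where the hypothesis $p \neq 2$ enters.
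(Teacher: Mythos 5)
Your proof is correct and follows essentially the same route as the paper: both extract $\varphi$ as the support map $E\mapsto \{|T(\mathbbm{1}_E)|>0\}$ using the disjointness preservation supplied by part~(3) of \autoref{lma:Clarkson}, and then pin down the multiplier from the isometry condition. The only organizational difference is that the paper (which, for transparency, reduces to a finite measure space) reads off the multiplier directly from $h=T(\mathbbm{1}_X)$ and identifies $|h|^p$ with the Radon--Nikodym derivative, whereas you first compose with $u_\varphi^{-1}$ and show that the remainder is a multiplication operator; this has the minor advantages of working directly in the general localizable setting and of making the uniqueness assertion, which the paper leaves implicit, explicit via part~(4) of \autoref{prop:IsometriesLp}.
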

\begin{proof}
To make the proof more transparent, we will assume that $(\mathcal{A},\mu)$ 
arises as in \autoref{eg:QuotSigmaAlg} from a \emph{finite} measure
space $(X,\Sigma,\mu)$. Given $E\in \mathcal{A}=\Sigma/\mathcal{N}$, 
set
\[\varphi(E)=\{|T(\mathbbm{1}_E)|>0\}.\]
If $E,F\in\mathcal{A}$ are disjoint, then so are $\varphi(E)$ and 
$\varphi(F)$ by part~(3) of \autoref{lma:Clarkson}. In particular, 
$\varphi(X-E)=\varphi(X)-\varphi(E)$, so $\varphi$ preserves complements and
is thus a homomorphism from $\mathcal{A}$ to itself. Since $T$ is invertible, so 
is $\varphi$. 

Using that $\mu(X)<\I$, set $h=T(\mathbbm{1}_X)$. 

We claim that $T(\xi)=(\xi\circ\varphi)h$ for all $\xi\in L^p(\mu)$. 
To check this, 
suppose first that $\xi=\mathbbm{1}_E$ for some $E\in\mathcal{A}$. Note that
\[h=T(\mathbbm{1}_E)+T(\mathbbm{1}_{E^c}),\]
and that the supports of the functions on the right-hand side are disjoint.
In particular, $h$ agrees with $T(\mathbbm{1}_E)$ on the support of $T(\mathbbm{1}_E)$, which
is $\varphi(E)$. Thus,
\[T(\mathbbm{1}_E)=h\mathbbm{1}_{\varphi(E)}=h(\mathbbm{1}_E\circ \varphi).\]
This verifies the claim for indicator functions, and thus for step functions.
Since these are dense in $L^p(\mu)$, the claim follows.

It remains to identify $h$. For $E\in\mathcal{A}$, we have
\[\mu(E)=\|\mathbbm{1}_E\|_p^p=\|T(\mathbbm{1}_E)\|_p^p=\int |h|^p \mathbbm{1}_{\varphi(E)}^p d\mu=\int_{\varphi(E)} |h|^p d\mu.\]
On the other hand, 
\[\mu(E)=(\mu\circ\varphi^{-1})(\varphi(E))=\int_{\varphi(E)}\frac{d(\mu\circ\varphi^{-1})}{d\mu} \ d\mu.\]
It follows that $|h|^p=\frac{d(\mu\circ\varphi^{-1})}{d\mu}$, so there exists
$f\in \U(L^\I(\mu))$ such that 
$h=f\left(\frac{d(\mu\circ\varphi^{-1})}{d\mu}\right)^{1/p}$. This finishes the proof.
\end{proof}

\begin{rem}\label{rem:liftTransf}
Suppose that $(\mathcal{A},\mu)$ arises from a measure
space $(X,\Sigma,\mu)$ as in \autoref{eg:QuotSigmaAlg}.
In some cases, it is possible to ``lift'' the automorphism $\varphi$ to a
bi-measurable bijective transformation $T\colon X\to X$ satisfying 
\[\mu(E)=0 \Leftrightarrow \mu(T(E))=0 \Leftrightarrow \mu(T^{-1}(E))=0. 
\]
This is always the case, for example, when $\mu$ is an atomic measure, 
in which case one even has $\mu(T(E))=\mu(E)$ for all $E\in\Sigma$.
\end{rem}

\begin{cor}
Let $(\mathcal{A},\mu)$ be a localizable measure algebra, let $p,q\in [1,\I)$
with $p\neq q$, and let $T\colon L^p(\mu)\cap L^q(\mu)\to L^p(\mu)\cap L^q(\mu)$
be a linear map that extends to isometric surjections $L^p(\mu)\to L^p(\mu)$ and 
$L^q(\mu)\to L^q(\mu)$. Then there exist $h\in \U(L^\I(\mu))$ and 
$\varphi\in\Aut(\mathcal{A})$ with $\mu\circ\varphi=\mu$ and 
\[T(\xi)=h(\varphi\circ\xi)\]
for all $\xi\in L^p(\mu)$.
\end{cor}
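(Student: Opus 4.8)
The plan is to reduce everything to \autoref{thm:Lamperti}, which applies because the hypothesis $p\neq q$ forces at least one of the two exponents to differ from $2$. Write $r$ for such an exponent ($r\in\{p,q\}$, $r\neq 2$) and $r'$ for the other one. Applying \autoref{thm:Lamperti} to the invertible isometry $T\colon L^r(\mu)\to L^r(\mu)$ produces $\varphi\in\Aut(\mathcal{A})$ and $f\in\U(L^\I(\mu))$ with $T\xi=f\,(\varphi\circ\xi)\,w^{1/r}$ for all $\xi\in L^r(\mu)$, where $w=\frac{d(\mu\circ\varphi^{-1})}{d\mu}$ (cf.\ \autoref{prop:IsometriesLp}(2)). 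Setting $h:=f$, the entire content of the corollary reduces to the single extra assertion that $\varphi$ is measure preserving, i.e.\ $w=1$ almost everywhere; for then $\mu\circ\varphi=\mu$ and the displayed formula for $T$ follows.

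To extract $w=1$ I would exploit that the \emph{same} linear map $T$ is isometric for both $r$ and $r'$, and test it on indicators. Fix $E\in\mathcal{A}$ with $\mu(E)<\I$, so that $\mathbbm{1}_E\in L^p(\mu)\cap L^q(\mu)$. Since $|f|\equiv 1$, the Lamperti formula gives $\|T\mathbbm{1}_E\|_r^r=\int_{\varphi(E)} w\,d\mu$ and $\|T\mathbbm{1}_E\|_{r'}^{r'}=\int_{\varphi(E)} w^{r'/r}\,d\mu$, while isometry in both exponents forces each of these to equal $\mu(E)$. Hence
\[ \int_{\varphi(E)} w^{r'/r}\,d\mu = \int_{\varphi(E)} w\,d\mu = \mu(E) \qquad\text{for every } E \text{ with } \mu(E)<\I. \]
Since $\mu(E)=(\mu\circ\varphi^{-1})(\varphi(E))=\int_{\varphi(E)} w\,d\mu$ by \autoref{thm:RadonNikodym}, and $\varphi$ is an automorphism, the elements $\varphi(E)$ occurring here are exactly the $F\in\mathcal{A}$ on which $w$ is integrable. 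Thus $w^{r'/r}$ and $w$ have equal integrals over every such $F$.

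Deducing $w=1$ a.e.\ from this integral identity is the crux of the argument. First, $\mu\circ\varphi^{-1}$ and $\mu$ are mutually absolutely continuous (an automorphism of $\mathcal{A}$, together with its inverse, sends nonzero elements to nonzero elements, and in the measure algebra a zero element is precisely a null one), so $w>0$ a.e. The key elementary fact is that $t^{r'/r}=t$ for $t>0$ holds if and only if $t=1$, valid precisely because $r\neq r'$: on $\{w>1\}$ the function $w^{r'/r}-w$ has a single nonzero sign, and on $\{w<1\}$ it has the opposite sign. Using semi-finiteness of $\mu$ to carve out finite-measure subsets $F$ of these level sets on which $w$ is in addition bounded (so that $w$ is integrable on $F$, making $F$ admissible above), the vanishing of $\int_F(w^{r'/r}-w)\,d\mu$ forces $\mu(\{w>1\})=\mu(\{w<1\})=0$. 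Hence $w=1$ a.e., $\mu\circ\varphi=\mu$, and $T\xi=h(\varphi\circ\xi)$ on $L^r(\mu)$.

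Finally, if $r=p$ we are done. If instead $p=2=r'$, I would promote the identity from $L^r$ to $L^p$ by continuity: both $T$ (the $L^p$-extension) and the isometry $m_h u_\varphi$ of $L^p(\mu)$ furnished by \autoref{prop:IsometriesLp} are bounded on $L^p(\mu)$ and agree on the simple functions with finite-measure support, which lie in $L^p(\mu)\cap L^q(\mu)$ and are dense in $L^p(\mu)$. Therefore $T\xi=h(\varphi\circ\xi)$ for all $\xi\in L^p(\mu)$, completing the proof. I expect the only genuinely delicate point to be the measure-theoretic step deducing $w=1$, where the semi-finiteness reduction to bounded, finite-measure pieces must be handled with some care.
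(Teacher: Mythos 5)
Your proof is correct and follows essentially the same route as the paper: reduce to the exponent different from $2$, apply \autoref{thm:Lamperti} there, and then use the isometry in the second exponent to force the Radon--Nikodym derivative to equal $1$. The paper dismisses that last step as ``easily checked,'' whereas you supply the verification (the sign argument for $w^{r'/r}-w$ on the level sets of $w$, together with semi-finiteness) and the density argument needed when $p=2$; both additions are sound.
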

\begin{proof}
Without loss of generality, we may assume that $p\neq 2$. 
By \autoref{thm:Lamperti}, there exist $h\in \mathcal{U}(L^\I(\mu))$
and $\varphi\in\mathrm{Aut}(\mathcal{A})$ such that
\[T(\xi)=h(\varphi\circ\xi)\left(\frac{d(\mu\circ\varphi^{-1})}{d\mu}\right)^{1/p}\]
for all $\xi\in L^p(\mu)$. 
It is easily checked that $\|T(\eta)\|_q=\|\eta\|_q$ for all 
$\eta\in L^p(\mu)\cap L^q(\mu)$ if and only 
if $\frac{d(\mu\circ\varphi^{-1})}{d\mu}=1$, which is equivalent to 
$\mu\circ\varphi=\mu$. This finishes the proof.
\end{proof}

\section{\texorpdfstring{$L^p$}{Lp}-operator algebras: basic examples}
Recall that a \emph{Banach algebra} is a complex algebra $A$ with a Banach space
structure, satisfying $\|ab\|\leq \|a\|\|b\|$ for all $a,b\in A$. When $A$ has a
multiplicative unit,
we moreover demand that $\|1\|=1$.

\begin{df}
Let $A$ be a Banach algebra. We say that $A$ is an \emph{$L^p$-operator algebra}
if there exist an $L^p$-space $E$ and an isometric homomorphism
$A\to \B(E)$. 

A \emph{representation} of $A$ (on an $L^p$-space $E$) is a contractive
homomorphism $\varphi\colon A\to \B(E)$. We say that $\varphi$ is \emph{non-degenerate}
if $\mathrm{span}\{\varphi(a)\xi\colon a\in A,\xi\in E\}$ is dense in $E$.
\end{df}

We make some comments on why we restrict to $p\in [1,\I)$. First, for $p<1$, the 
vector space $L^p(\mu)$ is not normed (and its dual space is in fact trivial). On the other hand,
for $p=\I$ we do not have a Lamperti-type theorem that allows us to represent the invertible
isometries spatially (neither do we for $p=2$, but in this case there are adjoints). For 
$p\in (1,\I)$, the fact that an $L^p$-space is reflexive is sometimes quite useful and produces
legitimate differences with the case $p=1$; see, for example, \autoref{thm:GpAmen}.

\begin{rem} 
For $p=2$, an $L^2$-operator algebra is a not necessarily self-adjoint operator algebra. 
\end{rem}

\begin{eg}\label{eg:Mn}
If $E$ is an $L^p$-space, then $\B(E)$ is trivially an $L^p$-operator algebra. 
When $E=\ell^p(\{1,\ldots,n\})$, then $\B(E)$ is algebraically isomorphic to $M_n$,
and we denote the resulting Banach algebra by $M_n^p$. \end{eg}

The norm described above is not the only $L^p$-operator 
norm on $M_n$ (even for $p=2$): 

\begin{eg} If $s\in M_n^p$ is an invertible
operator, one can define a new $L^p$-operator norm $\|\cdot\|_s$ on $M_n$ by setting $\|x\|_s=\|sxs^{-1}\|$. This norm is in general different from the 
one on $M_n^p$.
\end{eg}

\begin{eg}
Let $X$ be a locally compact topological space. Then $C_0(X)$ is an $L^p$-operator
algebra. In the case that there exists a regular Borel measure $\mu$ on $X$, one 
can represent $C_0(X)$ isometrically on $L^p(\mu)$ via multiplication operators.
(Such a measure does not always exist, but one can always find a ``separating'' 
family of such measures and take the direct sum of the resulting
representations by multiplication.)
\end{eg}

It is very convenient to work with non-degenerate representations. 
However, such representations don't always exist.

\begin{eg}
Let $\C_0$ be the Banach algebra whose underlying Banach space is $\C$, endowed
with the trivial product: $ab=0$ for all $a,b\in\C$. Then $\C_0$ is an $L^p$-operator
algebra for all $p\in [1,\I)$, since it is isometrically isomorphic to the upper-triangular matrices 
in $M_2^p$. However, $\C_0$ does not admit non-degenerate representations.
\end{eg}

For $L^p$-operator algebras with contractive approximate identities, one can always
``cut-down'' a given representation to obtain a non-degenerate one. This is much more
subtle than in the Hilbert space case (where one just co-restricts to the essential
range, which is automatically a Hilbert space), 
since there are subspaces of an $L^p$-space 
which are not themselves $L^p$-spaces. The case of unital algebras is much easier 
to prove.

\begin{prop}
Let $A$ be a unital $L^p$-operator algebra. Then $A$ admits an isometric, unital
representation $\varphi\colon A\to \B(E)$ on an $L^p$-space $E$.  
\end{prop}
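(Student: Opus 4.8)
The plan is to start from the isometric representation guaranteed by the hypothesis and to ``cut it down'' by the idempotent coming from the unit. Since $A$ is a unital $L^p$-operator algebra, fix an $L^p$-space $E_0$ and an isometric homomorphism $\psi\colon A\to \B(E_0)$. The element $e=\psi(1)$ satisfies $e^2=\psi(1)^2=\psi(1)=e$, so it is an idempotent, and $\|e\|=\|\psi(1)\|=\|1\|=1$ because $\psi$ is isometric and $\|1\|=1$ in a unital Banach algebra. Thus $e$ is a norm-one idempotent (a contractive projection) on $E_0$.

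Next I would restrict everything to the range $F=\range(e)=eE_0$, a closed subspace of $E_0$ on which $e$ acts as the identity. For every $a\in A$ one has $\psi(a)=\psi(1a1)=e\psi(a)e$, which shows that $\psi(a)$ annihilates $\ker(e)$ and maps $F$ into $F$. Hence $\varphi(a):=\psi(a)|_F$ defines a homomorphism $\varphi\colon A\to \B(F)$ with $\varphi(1)=e|_F=\id_F$, so $\varphi$ is unital. To see that $\varphi$ is isometric, observe that for $\xi\in F$ we have $\varphi(a)\xi=\psi(a)\xi$, whence $\|\varphi(a)\|\le\|\psi(a)\|=\|a\|$; conversely, writing an arbitrary $\xi\in E_0$ as $\xi=e\xi+(1-e)\xi$ and using $\psi(a)(1-e)\xi=e\psi(a)(e-e^2)\xi=0$ gives $\psi(a)\xi=\varphi(a)(e\xi)$, so $\|\psi(a)\xi\|\le\|\varphi(a)\|\,\|e\xi\|\le\|\varphi(a)\|\,\|\xi\|$ since $\|e\|=1$. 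Therefore $\|\varphi(a)\|=\|a\|$ for all $a$.

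The remaining, and genuinely substantial, point is that $F$ is itself (isometrically isomorphic to) an $L^p$-space. Here I would invoke the classical structure theorem for contractively complemented subspaces of $L^p$-spaces: the range of a norm-one idempotent on an $L^p$-space is isometrically isomorphic to an $L^p$-space for every $p\in[1,\I)$. Granting this, choose an isometric isomorphism $V\colon F\to E$ onto an $L^p$-space $E$ and replace $\varphi$ by $a\mapsto V\varphi(a)V^{-1}$; this is an isometric unital representation of $A$ on $E$, as required.

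I expect the main obstacle to be precisely the input in the last paragraph: that a contractively complemented subspace of an $L^p$-space is again an $L^p$-space. This is a nontrivial classical result, and it is also the reason the non-unital case is harder, since there one must handle a net of idempotents approximating the identity rather than a single honest idempotent, and control the range of the resulting limit projection. In the unital case the existence of the genuine norm-one idempotent $\psi(1)$ reduces the entire statement to a direct application of this theorem.
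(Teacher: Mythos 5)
Your proof is correct and follows essentially the same route as the paper: both cut down the given isometric representation by the contractive idempotent $e=\psi(1)$ and invoke Tzafriri's theorem (that the range of a norm-one idempotent on an $L^p$-space is again an $L^p$-space) to identify the essential subspace. You supply more detail than the paper on why the compression remains isometric, and you correctly isolate the contractive-projection theorem as the one genuinely nontrivial external input.
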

\begin{proof}
Let $F$ be an $L^p$-space and let $\psi\colon A\to \B(F)$ be an isometric 
representation. Set $e=\varphi(1)$, which is a contractive idempotent. By the main
result of \cite{Tza_remarks_1969}, the image $E=e(F)$ of $e$ is an $L^p$-space.
Define $\varphi\colon A\to \B(E)$ by $\varphi(a)(\xi)=\psi(a)(\xi)$ for all $a\in A$
and all $\xi\in E\subseteq F$. Then $\varphi$ is an isometric representation.
\end{proof}

For general algebras with a contractive approximate identity, one shows that there
is a contractive idempotent from the ambient $L^p$-space to the essential range, 
which implies that the essential range is an $L^p$-space; see \cite{GarThi_extending_2017}.

The study of $L^p$-operator algebras is generally much more complicated than that
of ($L^2$-)operator algebras, largely due to the complicated geometry of $L^p$-spaces.
Even for algebras that ``look like'' \ca s, many of the most fundamental facts about
\ca s fail. To mention a few:
\bi
\item There is so far no abstract characterization of $L^p$-operator algebras among all
Banach algebras, or a canonical way of obtaining a representation on an $L^p$-space 
for a given $L^p$-operator algebra;
\item $L^p$-operator norms are not unique; in particular, a homomorphism between
$L^p$-operator algebras does not necessarily have closed range, and an injective
homomorphism is not necessarily isometric.
\item For $p\neq 2$, not every quotient of an $L^p$-operator algebra can be
represented on an $L^p$-space; see \cite{GarThi_quotients_2016}. 
There is also no known characterization of which ideals give 
$L^p$-operator quotients.
\ei

We do not yet have a general theory, and it has been very productive to study concrete
families of $L^p$-operator algebras, typically (but not always) 
constructed from some topological/algebraic data.
In the following sections, we will introduce some of the most studied classes of
examples.

\section{Group algebras acting on \texorpdfstring{$L^p$}{Lp}-spaces}
Let $G$ be a locally compact group, and let $\mu$ denote its Haar measure. 
For functions $f$ and $g$ defined on $G$, their \emph{convolution} is defined as
\[(f\ast g)(s)=\int_G f(t)g(t^{-1}s)\ d\mu(t)\]
for all $s\in G$, whenever the integral is finite.
When $f\in L^1(G)$ and $g\in L^p(G)$, the convolution $f\ast g$
belongs to $L^p(G)$, and $\|f\ast g\|_p\leq \|f\|_1\|g\|_p$. 
In particular, $L^1(G)$ is a Banach algebra under convolution, and it is 
unital if and only if $G$ is discrete. We denote by
$\lambda_p\colon L^1(G)\to \B(L^p(G))$ the action by left convolution (also called
the \emph{left regular representation}). 

It is a standard fact in abstract harmonic analysis that unital, 
contractive representations of $L^1(G)$ on arbitrary Banach spaces
are in one-to-one correspondence with isometric representations of $G$,
via the integrated form. The case of discrete
groups is particularly easy to prove:

\begin{prop}\label{ex:ReprL1G}
Let $G$ be a discrete group, and let $E$ be any Banach space. 
Given a unital, contractive homomorphism $\varphi\colon \ell^1(G)\to \B(E)$, 
let $u_\varphi\colon G\to \mathrm{Isom}(E)$ be given by $u_\varphi(g)=\varphi(\delta_g)$ for $g\in G$. Conversely, given $u\colon G\to\mathrm{Isom}(E)$,
let $\varphi_u\colon \ell^1(G)\to \B(E)$ be the bounded linear map 
determined by $\varphi_u(\delta_g)=u_g$ for all $g\in G$.
\be\item The map $u_\varphi$ is an isometric representation of $G$ on $E$
(that is, $u_\varphi(g)$ 
is an invertible isometry of $E$ for all $g\in G$, and $u_\varphi(gh)=u_\varphi(g)u_\varphi(h)$ for all $g,h\in G$.)
\item The map $\varphi_u$ extends to a well-defined algebra homomorphism, which is unital and contractive.
\item We have $\varphi_{u_\varphi}=\varphi$ and $u_{\varphi_u}=u$. 
\ee
\end{prop}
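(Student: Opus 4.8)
The plan is to reduce everything to the single algebraic identity $\delta_g \ast \delta_h = \delta_{gh}$, which holds in $\ell^1(G)$ for a discrete group, together with the fact that $\delta_e$ is the multiplicative unit; all the remaining content is then the $\ell^1$-estimate and a density argument.

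First I would prove (1). Multiplicativity is immediate: for $g,h\in G$,
\[
u_\varphi(gh) = \varphi(\delta_{gh}) = \varphi(\delta_g \ast \delta_h) = \varphi(\delta_g)\varphi(\delta_h) = u_\varphi(g)u_\varphi(h),
\]
using that $\varphi$ is a homomorphism. Since $\varphi$ is unital and $\delta_e$ is the unit of $\ell^1(G)$, we get $u_\varphi(e)=\id_E$, and hence $u_\varphi(g)u_\varphi(g^{-1})=u_\varphi(g^{-1})u_\varphi(g)=\id_E$, so each $u_\varphi(g)$ is invertible with inverse $u_\varphi(g^{-1})$. The only point requiring a small argument is that $u_\varphi(g)$ is an \emph{isometry}: since $\varphi$ is contractive and $\|\delta_g\|_1=1$, both $\|u_\varphi(g)\|\le 1$ and $\|u_\varphi(g)^{-1}\|=\|u_\varphi(g^{-1})\|\le 1$; writing $\|\xi\|=\|u_\varphi(g)^{-1}u_\varphi(g)\xi\|\le \|u_\varphi(g)\xi\|\le \|\xi\|$ forces $\|u_\varphi(g)\xi\|=\|\xi\|$ for all $\xi\in E$.

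Next I would prove (2). The map $\varphi_u$ should be defined on $f=\sum_{g\in G}f(g)\delta_g\in\ell^1(G)$ by the norm-convergent series $\varphi_u(f)=\sum_{g\in G}f(g)u_g$; this series converges in $\B(E)$ because each $u_g$ is an isometry, so $\sum_g\|f(g)u_g\|=\sum_g|f(g)|=\|f\|_1<\I$, which simultaneously yields $\|\varphi_u(f)\|\le\|f\|_1$ and hence contractivity. Unitality follows from $u_e=\id_E$, which in turn holds because $u_e=u_{ee}=u_e^2$ is an idempotent invertible isometry. For multiplicativity, I would first check $\varphi_u(\delta_g\ast\delta_h)=u_{gh}=u_gu_h=\varphi_u(\delta_g)\varphi_u(\delta_h)$ on the basis elements, extend bilinearly to the dense subspace of finitely supported functions, and then pass to all of $\ell^1(G)$ using continuity of convolution $\ell^1(G)\times\ell^1(G)\to\ell^1(G)$, continuity of $\varphi_u$, and joint continuity of multiplication in $\B(E)$.

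Finally, (3) is a formal consequence: $\varphi_{u_\varphi}$ and $\varphi$ are both bounded linear maps that agree on every $\delta_g$ (by definition $\varphi_{u_\varphi}(\delta_g)=u_\varphi(g)=\varphi(\delta_g)$), and since $\{\delta_g\}_{g\in G}$ spans a dense subspace of $\ell^1(G)$ they coincide; the identity $u_{\varphi_u}=u$ is immediate from $u_{\varphi_u}(g)=\varphi_u(\delta_g)=u_g$. None of the steps is genuinely hard; the only places demanding care are the two-sided contractivity argument giving the isometry property in (1) and the interchange of the defining series with multiplication in (2), both of which are controlled by the uniform $\ell^1$-bound.
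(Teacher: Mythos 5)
Your proof is correct and follows essentially the same route as the paper's: the identity $\delta_g\ast\delta_h=\delta_{gh}$ drives multiplicativity, the two-sided contractivity squeeze gives the isometry property in (1), and the $\ell^1$-estimate plus density (or, equivalently, your absolutely convergent series) handles (2) and (3). The only differences are cosmetic — you define $\varphi_u$ directly by a norm-convergent sum rather than extending from $c_c(G)$, and you spell out unitality and multiplicativity of $\varphi_u$, which the paper leaves implicit.
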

\begin{proof}
(1). To show that $u_\varphi$ is a group homomorphism, let $g,h\in G$.
Then
\[u_\varphi(gh)=\varphi(\delta_{gh})=\varphi(\delta_g\ast\delta_h)=
 \varphi(\delta_g)\varphi(\delta_h)=u_\varphi(g)u_\varphi(h).
\]

Fix $g\in G$. We now show that $u_\varphi(g)$ is an invertible isometry.
It is clearly invertible, with inverse given by $u_\varphi(g^{-1})$.
On the other hand, since $\varphi$ is contractive, we have 
$\|u_\varphi(g)\|\leq 1$. Since this also applies to $g^{-1}$, we get
\[\|\xi\|=\|u_\varphi(g^{-1})(u_\varphi(g)(\xi))\|\leq \|u_\varphi(g)(\xi)\|\leq \|\xi\|\]
for all $\xi\in E$. It follows that $\|u_\varphi(g)(\xi)\|=\|\xi\|$ and 
thus $u_\varphi(g)$ is an isometry.

(2). Given $f\in c_c(G)\subseteq \ell^1(G)$, written as a finite linear
combination $f=\sum\limits_{g\in \supp(f)} a_g\delta_g$, we have
$\varphi_u(f)=\sum\limits_{g\in \supp(f)} a_gu_g$ and thus
\[\|\varphi_u(f)\|\leq \sum\limits_{g\in \supp(f)} |a_g|\|u_g\|= \sum\limits_{g\in \supp(f)} |a_g|=\|f\|_1.\]
Since $c_c(G)$ is dense in $\ell^1(G)$, it follows that $\varphi_u\colon c_c(G)\to \B(E)$ extends to a well-defined unital contractive representation $\varphi_u\colon \ell^1(G)\to \B(E)$.

(3). Given $f\in c_c(G)\subseteq \ell^1(G)$ written as a finite linear
combination $f=\sum\limits_{g\in \supp(f)} a_g\delta_g$, we have
\[\varphi_{u_\varphi}(f)=\sum\limits_{g\in \supp(f)} a_gu_\varphi(g)=
 \sum\limits_{g\in \supp(f)} a_g\varphi(\delta_g)=\varphi(f).
\]
Since $c_c(G)$ is dense in $\ell^1(G)$, we deduce that $\varphi_{u_\varphi}=\varphi$. Finally, given $g\in G$ we have
\[u_{\varphi_u}(g)=\varphi_u(\delta_g)=u_g,\]
so $u_{\varphi_u}=u$, as desired.
\end{proof}

\begin{eg}\label{eg:LeftReg}
In the context of the previous proposition, take 
$E=\ell^p(G)$, and let $\texttt{Lt}\colon G\to\ell^p(G)$ be
given by $\texttt{Lt}_g(\xi)(h)=\xi(g^{-1}h)$ 
for all $g,h\in G$ and all $\xi\in E$.
Then $\varphi_{\texttt{Lt}}=\lambda_p$.
\end{eg}
\begin{proof}
By part~(3) of \autoref{ex:ReprL1G}, it suffices to check that 
$u_{\lambda_p}=\texttt{Lt}$. Given $g\in G$ and $\xi\in \ell^p(G)$,
we have
\[u_{\lambda_p}(g)(\xi)=\lambda_p(\delta_g)(\xi)=\delta_g\ast\xi.\]
Hence, for $h\in G$ we have
\[u_{\lambda_p}(g)(\xi)(h)=(\delta_g\ast\xi)(h)=\int_G\delta_g(s)\xi(s^{-1}h)\ ds=\xi(g^{-1}h).\]
We conclude that $u_{\lambda_p}=\texttt{Lt}$, as desired.
\end{proof}

The following definitions are due to Herz \cite{Her_theory_1971}.

\begin{df}\label{df:GroupLpOpAlgs}
Let $G$ be a locally compact group.
\be
\item 
We define the \emph{algebra of $p$-pseudofunctions} $PF_p(G)$ of $G$, 
to be 
\[PF_p(G)=\overline{\lambda_p(L^1(G))}^{\|\cdot\|}\subseteq \B(L^p(G)).\]
\item 
We define the \emph{algebra of $p$-pseudomeasure} $PM_p(G)$ of $G$ 
to be 
\[PM_p(G)=\overline{\lambda_p(L^1(G))}^{\mathrm{WOT}}\subseteq \B(L^p(G)).\]
\item 
We define the \emph{algebra of $p$-convolvers} $CV_p(G)$ of $G$ 
to be 
\[CV_p(G)=\lambda_p(L^1(G))''\subseteq \B(L^p(G)).\]
\ee\end{df}

In general, we have $PF_p(G)\subseteq PM_p(G)\subseteq CV_p(G)$.

\begin{nota}
In these notes, we will usually denote the algebra of $p$-pseudo\-functions
on $G$, defined in (1)~above, by $F^p_\lambda(G)$. This algebra is also sometimes called the 
``reduced group $L^p$-operator algebra''.
\end{nota}

There is another very important 
$L^p$-operator algebra associated to a locally compact group, 
whose construction is due to
Phillips \cite{Phi_crossed_2013}.

\begin{df} \label{df:FullGroupLp}
Let $G$ be a locally compact group and let $p\in [1,\I)$. We define its 
\emph{full group $L^p$-operator algebra} $F^p(G)$ to be the completion of
$L^1(G)$ in the norm 
\[\|f\|_{F^p(G)}=\sup\{\|\varphi(f)\|\colon \varphi\colon L^1(G)\to \B(E) \mbox{ contractive
 representation}\},\]
for $f\in L^1(G)$. (Where $E$ ranges over all possible $L^p$-spaces.)
\end{df}

It is not entirely obvious from the definition that $F^p(G)$ is indeed an $L^p$-operator algebra
(unlike for the algebras defined in \autoref{df:GroupLpOpAlgs}, which are explicitly constructed 
as a Banach subalgebras of $\B(L^p(G))$).
For this, one needs to produce an isometric representation on some $L^p$-space.

\begin{prop}
Let $G$ be a locally compact group and let $p\in [1,\I)$. Then $F^p(G)$
is an $L^p$-operator algebra.
\end{prop}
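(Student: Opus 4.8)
The plan is to realize the universal norm $\|\cdot\|_{F^p(G)}$ as the operator norm coming from a \emph{single} contractive representation on a single $L^p$-space, built as an $\ell^p$-direct sum of a sufficiently large family of representations. The key structural input is that an $\ell^p$-direct sum $\bigoplus_{i\in I}^p E_i$ of $L^p$-spaces is again an $L^p$-space (namely $L^p$ of the disjoint union of the underlying measured algebras), together with the elementary fact that for operators $T_i\in\B(E_i)$ with $\sup_i\|T_i\|<\I$, the diagonal operator $\bigoplus_i T_i$ on $\bigoplus_i^p E_i$ has norm exactly $\sup_i\|T_i\|$.

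The one genuine difficulty is that the defining supremum in \autoref{df:FullGroupLp} ranges over a proper class of representations, so one cannot naively form the direct sum over all of them; the first task is to reduce to a \emph{set}. Fix the cardinal $\kappa=\max(\aleph_0,\mathrm{dens}(L^1(G)))$. I would show that it suffices to take the supremum over contractive representations on $L^p$-spaces of density character at most $\kappa$: given any contractive representation $\psi\colon L^1(G)\to\B(F)$, any $f\in L^1(G)$, and any $\varepsilon>0$, choose a unit vector $\xi\in F$ with $\|\psi(f)\xi\|>\|\psi(f)\|-\varepsilon$, and capture the orbit $\{\psi(a)\xi\colon a\in L^1(G)\}$ inside a closed sub-$L^p$-space of $F$ of density character at most $\kappa$ carrying a compatible contractive representation. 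Up to isometric isomorphism there is only a set of $L^p$-spaces of density character $\leq\kappa$, and on each there is only a set of contractive representations of $L^1(G)$, so the admissible representations form a set $\{\varphi_i\colon L^1(G)\to\B(E_i)\}_{i\in I}$ with $\|f\|_{F^p(G)}=\sup_{i\in I}\|\varphi_i(f)\|$ for every $f\in L^1(G)$.

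The main obstacle is precisely this density-character reduction, because it must be carried out within the category of $L^p$-spaces: a generic closed subspace of an $L^p$-space is \emph{not} an $L^p$-space, so one cannot simply restrict $\psi$ to the closed linear span of an orbit. One must instead pass to the closed sublattice (band) generated by a family of at most $\kappa$ vectors containing the relevant orbit, which \emph{is} an $L^p$-space of density character at most $\kappa$, and argue that the witnessing value $\|\psi(f)\xi\|$ survives on such a piece. This is the technical heart of the argument; everything else is formal.

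With the set $\{\varphi_i\}_{i\in I}$ in hand, I would set $E=\bigoplus_{i\in I}^p E_i$, which is an $L^p$-space by the input above, and let $\varphi=\bigoplus_{i\in I}\varphi_i\colon L^1(G)\to\B(E)$ act diagonally. Since each $\varphi_i$ is contractive, $\varphi$ is contractive, and the norm formula for diagonal operators gives $\|\varphi(f)\|=\sup_{i\in I}\|\varphi_i(f)\|=\|f\|_{F^p(G)}$ for all $f\in L^1(G)$. Hence $\varphi$ extends to an isometric homomorphism $F^p(G)\to\B(E)$ on the $L^p$-space $E$, exhibiting $F^p(G)$ as an $L^p$-operator algebra.
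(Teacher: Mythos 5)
Your proof is correct, but you have made the key step much harder than it needs to be. You and the paper share the same endgame: form an $\ell^p$-direct sum of a \emph{set} of contractive representations, use that such a direct sum is again an $L^p$-space and that a diagonal operator has norm $\sup_i\|T_i\|$, and conclude that the resulting single representation is isometric for $\|\cdot\|_{F^p(G)}$. Where you diverge is in how the defining supremum over a proper class is reduced to a set. You run a L\"owenheim--Skolem-type reduction to $L^p$-spaces of bounded density character, which forces you into the band/sublattice machinery (and note that the band generated by the single orbit $\{\psi(a)\xi\colon a\in L^1(G)\}$ need not be $\psi$-invariant, so your ``closed sub-$L^p$-space carrying a compatible representation'' requires an iterated closing-off: alternately enlarge to a $\psi$-invariant subspace and to a band, and take the closure of the union). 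The paper sidesteps all of this: for each pair $(f,n)\in L^1(G)\times\N$ it simply \emph{chooses} one contractive representation $\varphi_{f,n}$ with $\|\varphi_{f,n}(f)\|\geq\|f\|_{F^p(G)}-\tfrac{1}{n}$, and sums over the index set $L^1(G)\times\N$. Since a near-optimal witness for each $f$ is all that is needed to recover the supremum in \autoref{df:FullGroupLp}, no uniform bound on the size of the representing $L^p$-spaces is required, and the ``technical heart'' of your argument disappears entirely. What your approach buys in exchange is a genuinely reusable fact (every contractive representation of $L^1(G)$ on an $L^p$-space is controlled, vector by vector, by its restrictions to invariant bands of density character at most $\mathrm{dens}(L^1(G))$), but for the proposition at hand it is unnecessary overhead.
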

\begin{proof}
For $f\in L^1(G)$ and $n\in\N$, let 
$\varphi_{f,n}\colon L^1(G)\to \B(E_{f,n})$ be a contractive representation
satisfying 
\[\|\varphi_{f,n}\|\geq \|f\|_{F^p(G)}-\frac{1}{n}.\]
Set $E=\bigoplus\limits_{f\in L^1(G)}\bigoplus\limits_{n\in\N} E_{f,n}$
and let $\varphi\colon L^1(G)\to \B(E)$ denote the ``diagonal'' representation.
Then $E$ is an $L^p$-space. Moreover, for $f\in L^1(G)$ and $n\in\N$, one has 
\[\|\varphi(f)\|=\sup_{m\in\N}\sup_{g\in L^1(G)}\|\varphi_{g,m}(f)\|\geq \|\varphi_{f,n}(f)\|\geq \|f\|_{F^p(G)}-\frac{1}{n}.\]
In particular, $\|\varphi(f)\|\geq \|f\|_{F^p(G)}$. Since $\varphi$ is a contractive
representation of $L^1(G)$ on some $L^p$-space, we also have 
$\|\varphi(f)\|\leq \|f\|_{F^p(G)}$, and hence 
$\|\varphi(f)\|= \|f\|_{F^p(G)}$. It follows that the norm-closure of 
$\varphi(L^1(G))$ in $\B(E)$ is isometrically isomorphic to $F^p(G)$, and thus 
$F^p(G)$ is an $L^p$-operator algebra.
\end{proof}

\begin{rem}
When $G$ is discrete, $F^p(G)$ is the universal $L^p$-operator algebra
generated by invertible isometries $u_s$, for $s\in G$, satisfying
$u_su_t=u_{st}$. 
\end{rem}

Since $\lambda_p\colon L^1(G)\to\B(L^p(G))$ is a contractive representation, it
follows that $\|\cdot\|_{F^p_\lambda(G)}\leq \|\cdot \|_{F^p(G)}$. In other words,
the identity map 
\[\id\colon (L^1(G),\|\cdot\|_{F^p_\lambda(G)})\to (L^1(G),\|\cdot\|_{F^p(G)})\]
is contractive, and it extends to a contractive map 
$\kappa_p\colon F^p(G)\to F^p_\lambda(G)$ between their completions, which 
has dense range since it contains $L^1(G)$. 

The cases $p=1$ and $p=2$ of the algebras in 
\autoref{df:GroupLpOpAlgs} and \autoref{df:FullGroupLp}
are easy to describe. For the identification of $F^2(G)$, we will need
the following easy fact:

\begin{lma}\label{ex:UnitariesBH}
Let $\Hi$ be a Hilbert space and let $u\in\B(\Hi)$. Show that $u$ is a 
unitary if and only if $u$ is invertible and $\|u\|=\|u^{-1}\|=1$.
\end{lma}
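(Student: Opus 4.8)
The plan is to prove the two implications separately, noting that the forward direction is immediate while the converse is where the Hilbert space structure does the real work.

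For the easy direction, I would start from the assumption that $u$ is unitary, i.e. $u^*u = uu^* = 1$. This immediately exhibits $u^*$ as a two-sided inverse, so $u$ is invertible with $u^{-1} = u^*$. To compute the norms, I would use $\|u\xi\|^2 = \langle u^*u\xi, \xi\rangle = \|\xi\|^2$, which shows that $u$ is an isometry and hence $\|u\| = 1$; applying the same to $u^{-1} = u^*$ (which is again unitary) gives $\|u^{-1}\| = 1$.

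For the converse, I would first observe that the hypotheses $\|u\| = \|u^{-1}\| = 1$ force $u$ to be an isometry. The key computation is the chain
\[\|\xi\| = \|u^{-1}u\xi\| \leq \|u^{-1}\|\,\|u\xi\| = \|u\xi\| \leq \|u\|\,\|\xi\| = \|\xi\|\]
for every $\xi \in \Hi$, which pins every inequality to an equality and yields $\|u\xi\| = \|\xi\|$. Next I would upgrade this norm identity to the operator identity $u^*u = 1$: since $\langle u^*u\xi, \xi\rangle = \|u\xi\|^2 = \|\xi\|^2 = \langle \xi, \xi\rangle$ for all $\xi$, the self-adjoint operator $u^*u - 1$ has vanishing quadratic form, so by the polarization identity it is zero. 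Finally, multiplying $u^*u = 1$ on the right by $u^{-1}$ gives $u^* = u^{-1}$, whence $uu^* = uu^{-1} = 1$ as well, and $u$ is unitary.

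The genuinely essential step is the passage from ``$u$ preserves norms'' to ``$u^*u = 1$'', which relies entirely on polarization and hence on the inner product. This is exactly the tool that is unavailable for $L^p$-spaces with $p \neq 2$, and its absence is the reason why one needs the much harder Lamperti theorem (\autoref{thm:Lamperti}) to describe invertible isometries in that setting, rather than the soft argument available here.
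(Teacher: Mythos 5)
Your proof is correct and follows essentially the same route as the paper: the chain $\|\xi\|=\|u^{-1}u\xi\|\leq\|u\xi\|\leq\|\xi\|$ showing that $u$ is an isometry is exactly the paper's argument. You additionally spell out the polarization step upgrading ``surjective isometry'' to ``unitary'' and include the trivial forward implication, both of which the paper leaves implicit.
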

\begin{proof}
Let $\xi\in \Hi$. Then
\[\|\xi\|=\|u^{-1}(u(\xi))\|\leq \|u(\xi)\|\leq \|\xi\|,\]
since both $u$ and $u^{-1}$ are contractive. It follows that $\|u(\xi)\|=\|\xi\|$ and thus $u$ is an isometry. Since it is surjective, it follows
that it is a unitary.
\end{proof}

\begin{prop}\label{lma:p1p2}
Let $G$ be a locally compact group.
\bi\item When $p=1$, we get $F_\lambda^1(G)=F^1(G)=L^1(G)$ and $PM_1(G)=CV_1(G)=M(G)$.
\item When $p=2$, we get $F_\lambda^2(G)=C^*_\lambda(G)$, $F^2(G)=C^*(G)$ and $PM_2(G)=CV_2(G)=W^*(G)$.
\ei
\end{prop}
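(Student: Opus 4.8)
The plan is to handle the four identifications according to their nature: the two norm-closed algebras $F^\bullet_\lambda$ and $F^\bullet$ reduce to the isometry of the regular representation (for $p=1$) and to the standard correspondence between contractive Hilbert-space representations and unitary group representations (for $p=2$), while the two weak-closure algebras $PM_\bullet$ and $CV_\bullet$ reduce to the von Neumann bicommutant theorem (for $p=2$) and to Wendel's multiplier theorem (for $p=1$).

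First I would treat $p=1$. The estimate $\|f\ast g\|_1\le\|f\|_1\|g\|_1$ gives $\|\lambda_1(f)\|\le\|f\|_1$, and applying $\lambda_1(f)$ to a contractive approximate identity $(e_i)$ of $L^1(G)$, for which $\|f\ast e_i\|_1\to\|f\|_1$, yields the reverse inequality; hence $\lambda_1$ is \emph{isometric} and $F^1_\lambda(G)\cong L^1(G)$. For the full algebra, $\lambda_1$ itself is an admissible contractive representation, so $\|f\|_{F^1(G)}\ge\|\lambda_1(f)\|=\|f\|_1$; conversely every contractive representation arises, via the integrated form (the locally compact version of \autoref{ex:ReprL1G}), from an isometric representation $u$ of $G$, and $\|\varphi_u(f)\|\le\int_G|f(s)|\,\|u_s\|\,d\mu(s)=\|f\|_1$. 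Thus $\|f\|_{F^1(G)}=\|f\|_1$ and $F^1(G)=L^1(G)$.

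Next, for $p=2$ both norm-closed identifications are nearly formal. The closure $\overline{\lambda_2(L^1(G))}$ in $\B(L^2(G))$ is self-adjoint because $\lambda_2$ intertwines the standard involution of $L^1(G)$ with the adjoint, i.e.\ $\lambda_2(f)^\ast=\lambda_2(f^\ast)$; being a norm-closed self-adjoint subalgebra of $\B(L^2(G))$, it is a \ca, and by definition it is $C^*_\lambda(G)$. For $F^2(G)=C^*(G)$, the key is \autoref{ex:UnitariesBH}: on a Hilbert space an invertible isometry is unitary, so an isometric representation of $G$ on a Hilbert space is unitary, and via the integrated form (reducing to the non-degenerate case using the contractive approximate identity) the contractive representations of $L^1(G)$ on Hilbert spaces are exactly the $\ast$-representations. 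The supremum defining $\|\cdot\|_{F^2(G)}$ therefore equals the maximal $C^*$-norm, giving $F^2(G)=C^*(G)$.

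Finally, the weak-closure algebras. At $p=2$, $\lambda_2(L^1(G))$ is a self-adjoint subalgebra of $\B(L^2(G))$, so the von Neumann bicommutant theorem identifies its WOT-closure with its double commutant; both are by definition $W^*(G)$, whence $PM_2(G)=CV_2(G)=W^*(G)$. At $p=1$ neither an adjoint nor a bicommutant theorem is available, and this is where the real content lies. One represents $M(G)$ isometrically on $L^1(G)$ by left convolution, extending $\lambda_1$, and observes $M(G)\subseteq PM_1(G)$ by approximating $\mu$ by $\mu\ast e_i\in L^1(G)$; combined with the general inclusion $PM_1(G)\subseteq CV_1(G)$, it suffices to show $CV_1(G)=\lambda_1(L^1(G))''\subseteq M(G)$. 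Since the right translations lie in $\lambda_1(L^1(G))'$, this amounts to Wendel's theorem that the bounded operators on $L^1(G)$ commuting with right translations are exactly convolution by a bounded measure, and I expect it to be the main obstacle: unlike the other identifications it is not a soft functional-analytic manipulation but rests on the fine structure of $L^1(G)$, passing from a multiplier to a weak-$\ast$ limit of measures via the approximate identity.
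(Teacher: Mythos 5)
Your argument is correct and, on every point the paper actually proves, it follows the same route: the contractive approximate identity gives isometry of $\lambda_1$ and the sandwich $\|\cdot\|_{F^1_\lambda(G)}\leq\|\cdot\|_{F^1(G)}\leq\|\cdot\|_1$ handles $F^1(G)$; the correspondence of \autoref{ex:ReprL1G} together with \autoref{ex:UnitariesBH} identifies $F^2(G)$ with $C^*(G)$; and the bicommutant theorem gives $CV_2(G)=W^*(G)$. The one place you genuinely diverge is $PM_1(G)=CV_1(G)=M(G)$: the paper omits this, asserting the proofs are ``analogous'' to the $F^1$ case, whereas you correctly observe that they are not analogous at all --- the inclusion $CV_1(G)\subseteq M(G)$ rests on Wendel's multiplier theorem (operators on $L^1(G)$ commuting with right translations are left convolutions by bounded measures), which is a structural fact about $L^1(G)$ rather than a soft norm estimate. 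Your chain $M(G)\subseteq PM_1(G)\subseteq CV_1(G)\subseteq\{\texttt{Rt}_s\}'=M(G)$, with the first inclusion via strong convergence of $\mu\ast e_i\to\mu$ on $L^1(G)$, is the standard and correct way to fill this gap, and identifying where the real content sits is a genuine improvement on the exposition here.
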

\begin{proof}
(1). Recall that $L^1(G)$ has a contractive approximate identity $(f_n)_{n\in\N}$.
Given $f\in L^1(G)$ and $\ep>0$, 
find $n\in\N$ such that $\|f\ast f_n\|_1\geq \|f\|_1-\ep$. Then
\[\|f\|_1\geq \|\lambda_1(f)\|_{\B(L^1(G))}\geq \frac{\|f\ast f_n\|_1}{\|f_n\|_1}\geq \|f\|_1-\ep.\]
Since $\ep>0$ is arbitrary, it follows that $\|\lambda_1(f)\|_{\B(L^1(G))}=\|f\|_1$
and thus $F_\lambda^1(G)=L^1(G)$. 

Since $\|\cdot\|_{F^1_\lambda(G)}\leq \|\cdot\|_{F^1(G)}\leq \|\cdot\|_{1}$, 
it follows that $\|\cdot\|_{F^1(G)}=\|\cdot\|_1$ and hence $F^1(G)=L^1(G)$ as well.
We omit the proofs of the identities $PM_1(G)=CV_1(G)=M(G)$, which are analogous.

(2). The identities $F_\lambda^2(G)=C^*_\lambda(G)$ and $PM_p(G)=W^*(G)$ are true by definition, and $CV_2(G)=W^*(G)$ by the double-commutant theorem. 
The identity $F^2(G)=C^*(G)$ follows from \autoref{ex:ReprL1G} and 
\autoref{ex:UnitariesBH}.
\end{proof}

In view of the previous proposition, we often regard the group algebras from
\autoref{df:GroupLpOpAlgs}, for different values of $p$, as a continuously 
varying family of Banach algebras that deform $L^1(G)$ or $M(G)$ into 
$C^*_\lambda(G)$, $C^*(G)$, or $W^*(G)$. 

The fact that $F_\lambda^1(G)$ and $F^1(G)$ agree is misleading, since, for other 
values of $p$, this happens if and only if $G$ is amenable. (Recall that a
group $G$ is \emph{amenable} if for every $\ep>0$ and for every compact subset
$K\subseteq G$, there exists a compact subset $F\subseteq G$ such that 
$\mu(FK\triangle F)<\ep \mu(F)$.) The following is Theorem~3.20 in~\cite{GarThi_group_2015},
and it was also independently proved by Phillips.

\begin{thm}\label{thm:GpAmen}
Let $G$ be a locally compact group and let $p>1$. Then the canonical map 
$\kappa_p\colon F^p(G)\to F^p_\lambda(G)$ is 
an isometric isomorphism if and only if $G$ is amenable. 
\end{thm}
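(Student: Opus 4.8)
The plan is to prove the two implications separately, using throughout that $\|\cdot\|_{F^p_\lambda(G)}\leq\|\cdot\|_{F^p(G)}$ always holds, so that $\kappa_p$ is isometric precisely when the reverse inequality $\|f\|_{F^p(G)}\leq\|\lambda_p(f)\|$ holds for all $f\in L^1(G)$. In other words, the theorem amounts to showing that the left regular representation norm-dominates every contractive representation exactly when $G$ is amenable.

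For the direction ``$G$ amenable $\Rightarrow$ $\kappa_p$ isometric'', I would argue that amenability lets one approximately absorb any representation into the regular one. Fix a contractive representation $\varphi\colon L^1(G)\to\B(E)$ on an $L^p$-space $E$; by (the locally compact analogue of) \autoref{ex:ReprL1G} it integrates an isometric representation $u\colon G\to\mathrm{Isom}(E)$. For a F\o lner set $F\subseteq G$ I define $V_F\colon E\to L^p(G,E)\cong L^p(G)\otimes_p E$ by
\[(V_F\xi)(g)=|F|^{-1/p}\,\mathbbm{1}_F(g)\,u(g^{-1})\xi.\]
Since each $u(g^{-1})$ is isometric and $F$ is normalized, $V_F$ is an isometry, and a direct computation gives
\[\|V_F u(h)-(\texttt{Lt}_h\otimes\id_E)V_F\|\leq\left(\tfrac{|hF\triangle F|}{|F|}\right)^{1/p},\]
which tends to $0$ along the F\o lner net, uniformly for $h$ in compact sets. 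Hence for $f\in C_c(G)$ one has $V_F\varphi(f)\approx(\lambda_p(f)\otimes\id_E)V_F$, and since tensoring with the identity does not change operator norms on $L^p$-spaces, $\|\lambda_p(f)\otimes\id_E\|=\|\lambda_p(f)\|$; as $V_F$ is isometric, passing to the limit yields $\|\varphi(f)\|\leq\|\lambda_p(f)\|$. Taking the supremum over all $\varphi$ and using density of $C_c(G)$ gives $\|f\|_{F^p(G)}\leq\|\lambda_p(f)\|$, so $\kappa_p$ is isometric.

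For the converse ``$\kappa_p$ isometric $\Rightarrow$ $G$ amenable'', I would feed $\kappa_p$ the simplest possible representation. The trivial representation yields a character $\tau\colon L^1(G)\to\C$, $\tau(f)=\int_G f\,d\mu$, a contractive representation on the one-dimensional $L^p$-space. If $\kappa_p$ is isometric then $\|f\|_{F^p(G)}=\|\lambda_p(f)\|$, so $|\int_G f\,d\mu|\leq\|\lambda_p(f)\|$ for all $f$, and restricting to $f\geq 0$ forces $\|\lambda_p(f)\|=\|f\|_1$. Testing this on a probability density $f=|K|^{-1}\mathbbm{1}_K$ produces unit vectors $\xi\in L^p(G)$ with $\|f\ast\xi\|_p$ arbitrarily close to $\|\xi\|_p=1$; but $f\ast\xi$ is an average of the isometric translates $\texttt{Lt}_t\xi$, and the uniform convexity of $L^p$ for $p\in(1,\I)$, quantified by the Clarkson inequalities of \autoref{lma:Clarkson}, forces these translates to be mutually close. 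This delivers Reiter's almost-invariant vectors, hence amenability, and this step is exactly where the hypothesis $p>1$ enters.

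The main obstacle is the converse's extraction of genuine F\o lner--Reiter vectors: one must promote ``the average of the translates $\texttt{Lt}_t\xi$ over $t\in K$ has nearly full norm'' into uniform approximate invariance $\|\texttt{Lt}_s\xi-\xi\|_p<\ep$ for all $s\in K$, keeping the estimates uniform over $K$ and carrying out the modular-function bookkeeping in the non-discrete case. The uniform convexity modulus supplied by \autoref{lma:Clarkson} is the quantitative input that makes this precise, and it also explains the failure at $p=1$, where $L^1$ is not uniformly convex and indeed $F^1_\lambda(G)=F^1(G)$ for every group.
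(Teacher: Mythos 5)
The paper does not actually prove this theorem: it is quoted from Theorem~3.20 of \cite{GarThi_group_2015} (also proved independently by Phillips), so there is no in-paper argument to compare against. Judged on its own, your outline is the standard and correct route. The forward direction is a genuine proof modulo routine details: the F\o lner intertwiner $V_F$ is isometric, the commutator estimate $\|V_F u(h)-(\texttt{Lt}_h\otimes\id_E)V_F\|\leq(\mu(hF\triangle F)/\mu(F))^{1/p}$ checks out, and the fact $\|\lambda_p(f)\otimes\id_E\|=\|\lambda_p(f)\|$ is exactly the amplification lemma the paper already invokes in the proof of \autoref{prop:Sbgps}. Two small points you should make explicit: you must first cut a possibly degenerate contractive representation down to its essential subspace (using the contractive approximate identity of $L^1(G)$ and Tzafriri's theorem, as in Section~3) before integrating it to a strongly continuous isometric group representation; and the paper's F\o lner condition is a right-translate one, so a symmetrization of $K$ is needed to control $hF\triangle F$ for $h\in K$.

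The converse is where the real work sits, and you have located it correctly but not closed it. From the trivial representation you correctly get $\|\lambda_p(f)\|=\|f\|_1$ for all $0\leq f\in L^1(G)$, and uniform convexity (the $p\leq 2$ half of \autoref{lma:Clarkson} is not quite the right tool here --- Clarkson's inequalities as stated give a disjointness criterion, whereas what you need is a quantitative modulus of convexity valid for all $p\in(1,\I)$) yields that the translates $\texttt{Lt}_t\xi$, $t\in K$, are mutually close only for \emph{almost every} pair $(t,t')$, i.e.\ for most $s\in K^{-1}K$ in a pushed-forward-measure sense. Promoting this to Reiter's property $P_p$ uniformly over a prescribed compact set is not a formality, and the route you sketch does not obviously terminate. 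The clean classical fix is to bypass uniform invariance entirely: set $\phi=|\xi|^p$, a probability density in $L^1(G)$, use the elementary inequality $\|\texttt{Lt}_t\phi-\phi\|_1\leq p\,\|\texttt{Lt}_t\xi-\xi\|_p$ (from $|a^p-b^p|\leq p|a-b|(a^{p-1}+b^{p-1})$ and H\"older) to transfer the averaged almost-invariance to $L^1$, and then take a weak-$\ast$ cluster point of the net $(\phi_i)$ in $L^\I(G)^*$ to produce a topologically invariant mean (Day's argument), which only requires invariance in this averaged/weak sense. With that substitution your proposal becomes a complete proof; your closing remark that $p>1$ enters precisely through uniform convexity, consistently with $F^1(G)=F^1_\lambda(G)$ from \autoref{lma:p1p2}, is exactly right.
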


This implies, among others, that for $G$ amenable the reduced group algebra
$F^p_\lambda(G)$ admits a characterization in terms of generators and relations.
For $G=\Z$, this description is particularly nice: 

\begin{cor}
$F_\lambda^p(\Z)$ is the universal $L^p$-operator algebra
generated by an invertible isometry and its inverse. 
For $p=2$, this algebra is isometrically
isomorphic to $C(S^1)$, but in general the norm is larger.
\end{cor}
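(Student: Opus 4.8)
The plan is to deduce everything from the amenability of $\Z$ together with the universal description of the full $L^p$-operator algebra of a discrete group. Since $\Z$ is abelian it is amenable, so \autoref{thm:GpAmen} (for $p>1$; for $p=1$ use \autoref{lma:p1p2}) shows that the canonical map $\kappa_p\colon F^p(\Z)\to F^p_\lambda(\Z)$ is an isometric isomorphism. By the remark following \autoref{df:FullGroupLp}, $F^p(\Z)$ is the universal $L^p$-operator algebra generated by invertible isometries $u_n$ ($n\in\Z$) subject to $u_mu_n=u_{m+n}$; since $\Z$ is generated by $1$, writing $u=u_1$ these relations collapse to the single requirement that $u$ be an invertible isometry with $u^{-1}=u_{-1}$ and $u_n=u^n$. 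Transporting this universal property through the isomorphism $\kappa_p$ shows that $F^p_\lambda(\Z)$ is the universal $L^p$-operator algebra generated by one invertible isometry and its inverse, which is the first assertion.

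For $p=2$, \autoref{lma:p1p2} identifies $F^2_\lambda(\Z)$ with $C^*_\lambda(\Z)=C^*(\Z)$ (the last equality again by amenability), and the Gelfand transform together with Pontryagin duality gives an isometric $*$-isomorphism $C^*(\Z)\cong C(\widehat{\Z})=C(S^1)$ carrying $u$ to the identity function $z\mapsto z$. For general $p$ I would build the comparison map to $C(S^1)$ directly: for each $\zeta\in S^1$, multiplication by $\zeta$ on the one-dimensional $L^p$-space $\C$ is an invertible isometry, so the universal property from the first part yields a contractive character $\chi_\zeta\colon F^p_\lambda(\Z)\to\C$ with $\chi_\zeta(u)=\zeta$. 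Assembling these (continuity in $\zeta$ is clear on the dense subalgebra of Laurent polynomials in $u$) produces a contractive homomorphism $\Phi\colon F^p_\lambda(\Z)\to C(S^1)$ with $\Phi(u)=\mathrm{id}_{S^1}$; on $f\in\ell^1(\Z)$ it is exactly $\lambda_p(f)\mapsto\widehat f$. Contractivity says $\|\widehat f\|_\infty\le\|\lambda_p(f)\|$, i.e. the $F^p_\lambda(\Z)$-norm dominates the $C(S^1)$-norm, and for $p=2$ this $\Phi$ is the isometric isomorphism just described.

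It remains to show $\Phi$ is not isometric when $p\neq 2$ — equivalently, that some element has $\ell^p$-convolution norm strictly larger than the supremum of its symbol — and this is the main obstacle. The cheap examples fail: for $a=1+u$ one computes $\|a\|_{F^p_\lambda(\Z)}=\|I+S\|_{\B(\ell^p(\Z))}=2=\|1+z\|_\infty$ (test against the coherent vectors $\xi_N=N^{-1/p}\sum_{k=0}^{N-1}\delta_k$, on which $I+S$ asymptotically doubles the bulk mass), and likewise monomial-phase or ``positive'' Laurent polynomials saturate the sup-norm, so the strict inequality only surfaces for genuinely oscillatory symbols. I would therefore either exhibit an explicit Laurent polynomial whose convolution operator on $\ell^p(\Z)$ has norm exceeding the supremum of its symbol, or invoke the classical theory of $\ell^p$-Fourier multipliers, where the multiplier norm is known to strictly dominate $\|\cdot\|_\infty$ for $p\neq 2$. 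A more structural alternative is to argue by contradiction: were $\Phi$ isometric it would be a surjective isometric isomorphism (its range contains the dense subalgebra of trigonometric polynomials), making $F^p_\lambda(\Z)$ isometrically isomorphic to the commutative $C^*$-algebra $C(S^1)$, in particular a uniform algebra satisfying $\|a^2\|=\|a\|^2$ for every $a$; producing a single element violating this identity then suffices. In every route the crux — and the only genuinely delicate point — is quantifying the failure of the sup-norm to compute the $L^p$-operator norm for $p\neq 2$.
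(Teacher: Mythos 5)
Your proof of the main assertion is correct and is exactly the paper's (implicit) argument: the corollary is stated as an immediate consequence of \autoref{thm:GpAmen} together with the remark that, for discrete $G$, the full algebra $F^p(G)$ is universal for invertible isometries satisfying the group relations, and for $G=\Z$ these data collapse to a single invertible isometry and its inverse. Your observation that the case $p=1$ needs \autoref{lma:p1p2} rather than \autoref{thm:GpAmen} is a point the paper glosses over, and your construction of the contractive symbol map $\Phi\colon F^p_\lambda(\Z)\to C(S^1)$, together with the check that $1+u$ does \emph{not} witness strict inequality, is sound.

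The one genuine gap is the final clause: you correctly isolate the claim that for $p\neq 2$ some Laurent polynomial has $\ell^p$-convolution norm strictly exceeding the sup norm of its symbol, but none of your three proposed routes is carried out, and the third (the uniform-algebra identity $\|a^2\|=\|a\|^2$) does not avoid the difficulty, since one must still produce a violating element. The cleanest way to close this within the resources of the paper is the contrapositive you already set up: if $\Phi$ were isometric on the dense subalgebra $\lambda_p(\ell^1(\Z))$, then $F^p_\lambda(\Z)$ would be isometrically isomorphic to $C(S^1)=F^2_\lambda(\Z)$; this contradicts Theorem~3.5 of \cite{GarThi_representations_2019}, quoted by the paper just before this corollary, which says the algebras $F^p_\lambda(\Z)$ for distinct H\"older exponents in $[1,2]$ are pairwise non-isometrically-isomorphic (and $p\neq 2$ forces $p'\neq 2$, so the conjugate-exponent exception is harmless). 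Alternatively, for $p=1$ a direct witness is $f=\delta_{-1}+\delta_0-\delta_1$, with $\|f\|_1=3$ but $\|\widehat{f}\,\|_\infty=\sqrt{5}$; for general $p\neq 2$ one must appeal to the classical theory of $\ell^p$-Fourier multipliers, which is a real input and not a formality.
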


A natural question that arises from looking at the cases $p=1,2$ is whether
the equality $PM_p(G)=CV_p(G)$ always holds. This is arguably the most important
open problem in the area, dating back to Herz's work in the 70's, 
and is known as the ``convolvers and pseudomeasures'' 
problem.

\begin{qst}\label{qst:ConvPseudoM}
Let $G$ be a locally compact group and let $p\in [1,\I)$. Is it true that  
$PM_p(G)=CV_p(G)$?
\end{qst}

The question above asks whether a specific case of the double-commutant theorem
holds for operators on $L^p$-spaces. It is known that \autoref{qst:ConvPseudoM}
has a positive answer, for all $p\in [1,\I)$, whenever $G$ has the so-called
approximation property; see \cite{DawSpr_convoluters_2019} for a 
particularly nice presentation of this result.
This is in particular the case when $G$ is amenable.

We finish this subsection by discussing the smallest non-trivial group algebra. 

\begin{eg}
$F_\lambda^p(\Z_2)$ is the Banach subalgebra of $\B(\ell^p(\{0,1\}))$ generated
by the rotation matrix $\begin{bmatrix}
    0 & 1 \\
    1 & 0
  \end{bmatrix}$. This algebra can be
identified with $\C^2$, but its norm is not the maximum norm. 
The norm of $(a,b)\in F_\lambda^p(\Z_2)$ is the $L^p$-operator norm of the matrix
$\frac{1}{2}\begin{bmatrix}
    a+b & a-b \\
    a-b & a+b
  \end{bmatrix}$.
\end{eg}


One can verify with elementary methods that the algebras $F_\lambda^p(\Z_2)$,
for different values of $p$, are pairwise not isometrically
isomorphic (unless $\frac{1}{p}+\frac{1}{q}=1$); 
see Proposition~3.2 in~\cite{GarThi_representations_2019}.
Moreover, this can be used to deduce a similar result for $F_\lambda^p(\Z)$;
see Theorem~3.5 in~\cite{GarThi_representations_2019}.

\subsection{Subgroups and quotients}
In this short subsection, we study some elementary functoriality properties
of $L^p$-operator group algebras.

\begin{rem}\label{ex:OpenSbgp}
Let $G$ be a locally compact group, let $H\subseteq G$ be an open subgroup,
and let $p\in [1,\I)$. Regard $L^1(H)$ as a subalgebra of $L^1(G)$
canonically (by extending a function on $H$ as zero on its complement). 
Then $\lambda_p^G|_{L^1(H)}$ is isometrically conjugate to 
the representation
\[\lambda_p^H\otimes \id_{\ell^p(G/H)}\colon L^1(H)\to 
\B\left(L^p(H)\otimes \ell^p(G/H)\right). 
\]
\end{rem}


\begin{prop}\label{prop:Sbgps}
Let $G$ be a locally compact group, let $H\subseteq G$ be an open subgroup,
and let $p\in [1,\I)$.
Denote by $\iota\colon L^1(H)\to L^1(G)$ the canonical isometric inclusion described
in \autoref{ex:OpenSbgp}. 
Then there are 
canonical injective contractive homomorphisms
\[\iota_\lambda^p\colon F^p_\lambda(H)\to F^p_\lambda(G) \ \ \mbox{ and } \ \
\iota^p\colon F^p(H)\to F^p(G).
\]
Moreover, $\iota_\lambda^p$ is isometric.
\end{prop}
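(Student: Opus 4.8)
The plan is to treat the reduced and the full cases separately, in both cases extending the inclusion $\iota\colon L^1(H)\to L^1(G)$ by density; the only real content is the comparison of norms.

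First I would dispose of the reduced case and show that $\iota_\lambda^p$ is isometric. By \autoref{ex:OpenSbgp}, for $f\in L^1(H)$ the operator $\lambda_p^G(\iota(f))$ is isometrically conjugate to $\lambda_p^H(f)\otimes\id_{\ell^p(G/H)}$ acting on $L^p(H)\otimes\ell^p(G/H)$. I would then record the elementary fact that tensoring by an identity does not change the operator norm: identifying $L^p(H)\otimes\ell^p(G/H)$ with the $\ell^p$-sum $\bigoplus_{G/H}^p L^p(H)$, the operator $\lambda_p^H(f)\otimes\id$ acts coordinatewise, so $\|T\otimes\id\|=\|T\|$ for every $T\in\B(L^p(H))$ (the inequality $\leq$ is a coordinatewise estimate, and $\geq$ comes from restricting to a single summand). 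Hence $\|\lambda_p^G(\iota(f))\|=\|\lambda_p^H(f)\|$, so $\iota$ is isometric for the reduced norms and extends to an isometric, in particular injective, homomorphism $\iota_\lambda^p\colon F^p_\lambda(H)\to F^p_\lambda(G)$.

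For the full case, contractivity of $\iota^p$ is the easy half: if $\psi\colon L^1(G)\to\B(E)$ is any contractive representation on an $L^p$-space, then $\psi\circ\iota\colon L^1(H)\to\B(E)$ is again a contractive representation (as $\iota$ is an isometric algebra homomorphism), so $\|\psi(\iota(f))\|\leq\|f\|_{F^p(H)}$; taking the supremum over $\psi$ gives $\|\iota(f)\|_{F^p(G)}\leq\|f\|_{F^p(H)}$, and $\iota$ extends to a contractive homomorphism $\iota^p\colon F^p(H)\to F^p(G)$.

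The injectivity of $\iota^p$ is the heart of the matter, and I expect it to be the main obstacle: the naive approach through the commuting square relating $\iota^p$, $\iota_\lambda^p$ and the canonical maps $\kappa_p^H,\kappa_p^G$ to the reduced algebras only yields $\ker\iota^p\subseteq\ker\kappa_p^H$, which is useless when $H$ is non-amenable. Instead I would produce, for a given representation of $L^1(H)$, an induced representation of $L^1(G)$ whose restriction to $L^1(H)$ dominates the original. Concretely, reducing first to representations arising, via the integrated form, from an isometric representation $u\colon H\to\mathrm{Isom}(E')$, I would let $G$ act by left translation on the induced space $E=\ell^p(G/H,E')$; since $H$ is open, $G/H$ is discrete and $E$ is genuinely an $L^p$-space, so this is a contractive representation of $L^1(G)$. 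The key point is that the trivial coset is fixed by the left $H$-action, so the coordinate subspace at $eH$, which is isometrically $E'$ and is itself an $L^p$-space, is $H$-invariant and carries exactly $u$. Restricting to this invariant subspace gives $\|(\mathrm{Ind}\,u)(\iota(f))\|\geq\|u(f)\|$, and taking the supremum over such $u$ yields $\|\iota(f)\|_{F^p(G)}\geq\|f\|_{F^p(H)}$. Combined with the previous paragraph this makes $\iota^p$ isometric, hence injective; the only care needed is the standard reduction allowing the defining supremum of $\|\cdot\|_{F^p(H)}$ to be computed over representations coming from isometric representations of $H$.
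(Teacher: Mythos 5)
Your treatment of $\iota_\lambda^p$ and of the contractivity of $\iota^p$ coincides with the paper's: the reduced case is exactly \autoref{ex:OpenSbgp} combined with the identity $\|T\otimes\id\|=\|T\|$ for an operator acting coordinatewise on an $\ell^p$-direct sum, and the contractivity of $\iota^p$ is the same composition-with-$\iota$ argument. Where you genuinely diverge is on the injectivity of $\iota^p$: the paper's proof does not address it at all (it only establishes $\|\iota(f)\|_{F^p(G)}\leq\|f\|_{F^p(H)}$), and, as you correctly observe, the obvious commuting square involving the maps $\kappa_p$ only yields $\ker\iota^p\subseteq\ker\kappa_p^H$, which says nothing when $H$ is non-amenable. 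Your induction argument --- realizing a given isometric representation $u$ of $H$ on $E'$ as the restriction of the left-translation action of $G$ on $\ell^p(G/H,E')$ to the $H$-invariant summand over the trivial coset, where the cocycle twist is exactly $u$ --- closes this gap, and in fact proves the reverse inequality $\|\iota(f)\|_{F^p(G)}\geq\|f\|_{F^p(H)}$, i.e.\ that $\iota^p$ is isometric, which is stronger than the injectivity asserted in the statement. The two points you flag as needing care are indeed the only delicate ones: openness of $H$ makes $G/H$ discrete, so $\ell^p(G/H,E')$ is again an $L^p$-space and the induced translation representation is by invertible isometries; and the supremum defining $\|\cdot\|_{F^p(H)}$ may be computed over integrated forms of isometric representations of $H$, since a contractive representation of $L^1(H)$ has the same norm as its restriction to its essential subspace, which is an $L^p$-space by the contractive-approximate-identity argument quoted in Section~3.
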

\begin{proof}
For the map $\iota^p$, one needs to show that for all $f\in L^1(H)$
one has 
\[\|\iota(f)\|_{F^p(G)}\leq \|f\|_{F^p(H)}.\]
Given a contractive representation $\varphi\colon L^1(G)\to \B(E)$ on an 
$L^p$-space $E$, the composition $\psi=\varphi\circ\iota\colon L^1(H)\to \B(E)$
is also contractive, and one clearly has 
\[\|\varphi(\iota(f))\|= \|\psi(f)\|.\]
One readily checks, using the definition of $\|\cdot\|_{F^p(G)}$ as a supremum, 
that the above implies the desired inequality.

The result for $\iota_\lambda^p$ follows immediately from
\autoref{ex:OpenSbgp}, together with the fact that 
$\|\lambda_p^H(f)\otimes \id_{E}\|=\|\lambda^H_p(f)\|$ for all 
$f\in L^1(H)$ and all $L^p$-spaces $E$.
\end{proof}

For the following result, we will use, without proof, 
that if $N$ is a closed normal subgroup in a locally compact group $G$, 
then there exists a quotient map $\pi \colon L^1(G)\to L^1(G/N)$.

\begin{prop}\label{prop:Quotients}
Let $G$ be a locally compact group, let $N\subseteq G$ be a closed normal subgroup,
and let $p\in [1,\I)$. 
\be\item There is 
a canonical contractive homomorphism with dense range
\[\pi^p\colon F^p(G)\to F^p(G/N).\]
\item When $p>1$, there is 
a canonical contractive homomorphism with dense range
\[\pi_\lambda^p\colon F^p_\lambda(G)\to F^p_\lambda(G/N)\]
if and only if $N$ is amenable.
\ee
\end{prop}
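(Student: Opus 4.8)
The plan is to prove both parts by exploiting the functoriality of the construction through the quotient map $\pi\colon L^1(G)\to L^1(G/N)$ at the level of $L^1$, which we are permitted to assume exists and is a contractive surjection. For part~(1), the strategy is essentially dual to the subgroup case in \autoref{prop:Sbgps}: given any contractive representation $\rho\colon L^1(G/N)\to \B(E)$ on an $L^p$-space, the composition $\rho\circ\pi\colon L^1(G)\to \B(E)$ is again a contractive representation, so the defining supremum for $\|\cdot\|_{F^p(G/N)}$ runs over a subfamily of the representations controlling $\|\cdot\|_{F^p(G)}$. This immediately yields $\|\pi(f)\|_{F^p(G/N)}\leq \|f\|_{F^p(G)}$ for all $f\in L^1(G)$, so $\pi$ extends by continuity to a contractive map $\pi^p$ on completions. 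Density of the range follows because $\pi(L^1(G))=L^1(G/N)$ is dense in $F^p(G/N)$ by construction. This part is routine and requires no amenability hypothesis, precisely because the \emph{full} norm is a supremum over \emph{all} representations and hence behaves well under pullback.

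The substance of the result is part~(2), and here I expect the main obstacle to lie. The reduced norm $\|\cdot\|_{F^p_\lambda(G)}$ is pinned to the single representation $\lambda_p^G$ on $L^p(G)$, so the easy pullback argument from part~(1) is unavailable: there is no reason a priori that the left regular representation of $G/N$ pulled back to $G$ should be dominated by $\lambda_p^G$. The natural approach is to relate the two regular representations via the decomposition of $L^p(G)$ as an integral over the cosets of $N$. Concretely, one would like to realize $\lambda_p^{G/N}$ as a subrepresentation or compression of $\lambda_p^G$ twisted by the regular representation of $N$. The quantitative comparison of norms is exactly where amenability of $N$ must enter: the existence of a contractive homomorphism $\pi_\lambda^p$ with dense range is equivalent to the inequality $\|\lambda_p^{G/N}(\pi(f))\|\leq \|\lambda_p^G(f)\|$ for all $f$, and Følner-type averaging over $N$ (using the Følner sets guaranteed by amenability) is the mechanism that produces such a domination. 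The forward direction ``$N$ amenable $\Rightarrow$ map exists'' should follow by constructing, for each finite-energy vector on $G/N$, an approximating family of vectors on $G$ supported on increasingly invariant Følner sets in $N$, along which the norm of $\lambda_p^G(f)$ approaches that of $\lambda_p^{G/N}(\pi(f))$; the $p>1$ hypothesis enters to ensure we have genuine $L^p$-space geometry and can use the isometric machinery of Lamperti's theorem from \autoref{thm:Lamperti}.

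The converse ``map exists $\Rightarrow$ $N$ amenable'' is the more delicate half and is where I would be most careful. Here the idea is contrapositive: if $N$ is non-amenable, then the reduced algebra $F^p_\lambda(N)$ is strictly smaller than $F^p(N)$ by \autoref{thm:GpAmen}, and this defect should obstruct the existence of any contractive homomorphism $F^p_\lambda(G)\to F^p_\lambda(G/N)$ respecting $\pi$. One would restrict a hypothetical $\pi_\lambda^p$ along the inclusion $\iota_\lambda^p\colon F^p_\lambda(N)\to F^p_\lambda(G)$ from \autoref{prop:Sbgps} and observe that $\pi$ kills $L^1(N)$ (since $N$ maps to the identity coset in $G/N$), forcing $F^p_\lambda(N)$ into the kernel; comparing this with the behavior forced by the trivial representation of $N$ on $G/N$ yields a contradiction with the strict inequality $\|\cdot\|_{F^p_\lambda(N)}<\|\cdot\|_{F^p(N)}$ characteristic of non-amenability. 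The crux, and the step I expect to demand the most work, is making this obstruction \emph{quantitative}: one must show that the discrepancy between the reduced and full norms of $N$ genuinely propagates to an incompatibility at the level of $G$ and $G/N$, rather than being absorbed. I would anticipate that the cleanest route invokes a weak containment or Fell-topology characterization of amenability adapted to the $L^p$-setting, which is morally what \autoref{thm:GpAmen} encodes, and then transports it along the short exact sequence $N\to G\to G/N$.
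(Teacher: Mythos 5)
Your argument for part~(1) is correct and is exactly what the paper has in mind: the full norm is a supremum over all contractive representations on $L^p$-spaces, every such representation of $L^1(G/N)$ pulls back along the contractive surjection $\pi$ to one of $L^1(G)$, and density of the range is automatic since $\pi(L^1(G))=L^1(G/N)$. Nothing more is needed there.

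For part~(2) the paper gives no proof either (it only records that the argument ``uses the theory of weak containment of representations''), so what can be assessed is the soundness of your sketch, and it has concrete problems. First, you propose to restrict a hypothetical $\pi_\lambda^p$ along the inclusion $\iota_\lambda^p\colon F^p_\lambda(N)\to F^p_\lambda(G)$ of \autoref{prop:Sbgps}; but that proposition is stated, and proved via the decomposition of \autoref{ex:OpenSbgp}, only for \emph{open} subgroups, whereas $N$ is merely closed and normal, so this step is unavailable in general. Second, the claim that $\pi$ ``kills $L^1(N)$, forcing $F^p_\lambda(N)$ into the kernel'' is wrong even when $N$ is open: $\pi$ restricted to $L^1(N)$ is $f\mapsto\bigl(\int_N f\bigr)\delta_{eN}$, i.e.\ the augmentation (trivial) character, not the zero map. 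This is in fact the heart of the matter: contractivity of $\pi_\lambda^p$ forces $\bigl|\int_N f\bigr|\leq\|\lambda_p^N(f)\|$, that is, weak containment of the trivial representation of $N$ in $\lambda_p^N$, which for $p>1$ is equivalent to amenability of $N$; your formulation via the kernel would instead yield a vacuous statement. Third, the role of the hypothesis $p>1$ has nothing to do with Lamperti's theorem (which requires $p\neq 2$, not $p>1$); the point is that for $p=1$ all the norms involved collapse to $\|\cdot\|_1$, so the trivial representation is always dominated by the regular one and the quotient map always exists, as the paper notes. Finally, your forward direction via F{\o}lner averaging is the right idea but is asserted rather than carried out. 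As written, the proposal establishes part~(1) but does not constitute a proof of either implication in part~(2).
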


Part~(1) is proved easily using the map $\pi\colon L^1(G)\to L^1(G/N)$
described before the proposition, and we leave its verification to the 
reader.
The proof of part~(2) (for $p>1$) uses the theory of weak
containment of representations, and we omit it. 
The map in~(2) exists for $p=1$ regardless of whether $N$ is amenable
or not: indeed, this is just the map $\pi\colon L^1(G)\to L^1(G/N)$. 

\subsection{The effect of changing the exponent $p$}
In this subsection, we look at the question of whether the algebras 
$F^p_\lambda(G)$ (or $F^p(G)$), for different values of $p$, are isometrically
isomorphic or anti-isomorphic. 

\begin{df}
Let $A$ be a Banach algebra. We define its \emph{opposite algebra} as the Banach
algebra $A^{\mathrm{opp}}$ whose underlying Banach space structure agrees with 
that of $A$, and where $a\cdot_{\mathrm{opp}} b=ba$ for all $a,b\in A$.
A representation of $A^{\mathrm{opp}}$ is naturally identified with an
\emph{anti-representation} of $A$ (namely one which is multiplicative with 
respect to the opposite multiplication).
\end{df}

In the following lemma, for a Banach space $E$ we denote by $E'$ its
dual space, and for an operator $a\in \B(E)$ we denote by $a'\in \B(E')$
its adjoint operator. Recall that $\|a'\|=\|a\|$ and $(ab)'=b'a'$
for all $a,b\in \B(E)$.

\begin{lma}
Given $p\in (1,\I)$, we denote by $p'\in (1,\I)$ its conjugate exponent. 
A algebra $A$ is an $L^p$-operator algebra if and only if $A^{\mathrm{opp}}$ is an 
$L^{p'}$-operator algebra.
\end{lma}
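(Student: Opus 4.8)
The plan is to combine two facts that are essentially handed to us: first, the remark recalled just before the statement, that taking adjoints $a\mapsto a'$ is an isometric \emph{anti}-homomorphism $\B(E)\to\B(E')$ (so $\|a'\|=\|a\|$ and $(ab)'=b'a'$); and second, the classical duality identifying the dual of an $L^p$-space with an $L^{p'}$-space when $p\in(1,\I)$. The key observation is that I only need to prove one implication and can then deduce the other by symmetry, since $A^{\mathrm{opp}}$ and the passage $p\mapsto p'$ are both involutions.

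For the forward direction, suppose $A$ is an $L^p$-operator algebra, witnessed by an isometric homomorphism $\varphi\colon A\to\B(E)$ with $E$ an $L^p$-space. I would define $\psi\colon A\to\B(E')$ by $\psi(a)=\varphi(a)'$. The identity $(ab)'=b'a'$ gives $\psi(ab)=\varphi(ab)'=(\varphi(a)\varphi(b))'=\varphi(b)'\varphi(a)'=\psi(b)\psi(a)$, so $\psi$ is an anti-homomorphism, that is, a genuine homomorphism $A^{\mathrm{opp}}\to\B(E')$; and $\|\psi(a)\|=\|\varphi(a)'\|=\|\varphi(a)\|=\|a\|$ shows it is isometric. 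Since $p\in(1,\I)$, the dual $E'$ of the $L^p$-space $E$ is an $L^{p'}$-space, so $\psi$ exhibits $A^{\mathrm{opp}}$ as an $L^{p'}$-operator algebra. The converse costs nothing extra: applying this implication with $p$ replaced by its conjugate $p'$ to the algebra $A^{\mathrm{opp}}$ shows that $(A^{\mathrm{opp}})^{\mathrm{opp}}$ is an $L^{(p')'}$-operator algebra, and since $(p')'=p$ and $(A^{\mathrm{opp}})^{\mathrm{opp}}=A$, this is precisely the assertion that $A$ is an $L^p$-operator algebra.

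The only genuinely non-formal ingredient, and the point where the restriction $p\in(1,\I)$ is essential, is the identification of $E'$ with an $L^{p'}$-space; for $E=L^p(\mu)$ with $(\mathcal{A},\mu)$ localizable this is the standard $L^p$--$L^{p'}$ duality, valid in the measured-Boolean-algebra setting, which I would cite as classical. I expect this to be the main (and indeed only) obstacle worth flagging: everything else is a formal manipulation of adjoints. It is worth remarking that the argument genuinely fails at $p=1$, since $(L^\infty)'$ is not an $L^1$-space, which is exactly why the statement is phrased for $p\in(1,\I)$.
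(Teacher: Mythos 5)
Your proposal is correct and follows essentially the same route as the paper: represent $A$ isometrically on an $L^p$-space $E$, compose with the adjoint map $a\mapsto a'$ to get an isometric anti-multiplicative map into $\B(E')$, and invoke the $L^p$--$L^{p'}$ duality to recognize $E'$ as an $L^{p'}$-space. The only difference is that you spell out the converse via the involutive symmetry of $p\mapsto p'$ and $A\mapsto A^{\mathrm{opp}}$, and explicitly flag why $p=1$ is excluded, both of which the paper leaves implicit.
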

\begin{proof}
Fix an isometric representation $\varphi\colon A\to \B(E)$ on some 
$L^p$-space $E$. Define a linear map $\varphi'\colon A\to \B(E')$
by $\varphi'(a)=\varphi(a)'$ for all $a\in A$. Then $\varphi'$ is
isometric and anti-multiplicative. In other words, $\varphi'$ is an
isometric representation of $A^{\mathrm{opp}}$ on the $L^{p'}$-space
$E'$.
\end{proof}

Let $G$ be a locally compact group, and denote by $\Delta\colon G\to \mathbb{R}$
its modular function.
For $f\in L^1(G)$, let $f^\sharp\colon G\to\C$ be given by $f^\sharp(s) = \Delta(s^{-1}) f(s^{-1})$ for all
$s\in G$. 

\begin{prop}\label{ex:DualRegRep} 
Let $G$ be a locally compact group.
\be
\item Given $f\in L^1(G)$, the function $f^\sharp$ also belongs to $L^1(G)$. \item The resulting map $\sharp\colon L^1(G)\to L^1(G)$ is an anti-multiplicative isometric linear map of order two. 
\item Let $p\in (1,\I)$.
Then $\lambda_p(f)'=\lambda_{p'}(f^\sharp)$ for all $f\in L^1(G)$.
\ee
\end{prop}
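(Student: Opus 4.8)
The plan is to prove the three parts in order, with parts (1) and (2) being computations about the map $\sharp$ on $L^1(G)$ and part (3) being the key identification connecting $\sharp$ to the Banach-space adjoint of the left regular representation.

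First I would handle parts (1) and (2) together, since they are really one computation. For $f \in L^1(G)$, I would compute $\|f^\sharp\|_1 = \int_G |\Delta(s^{-1}) f(s^{-1})| \, d\mu(s)$ and apply the standard modular substitution $d\mu(s^{-1}) = \Delta(s^{-1}) \, d\mu(s)$, which immediately yields $\|f^\sharp\|_1 = \|f\|_1$; this proves (1) and the isometry claim in (2). For order two, I would verify directly that $(f^\sharp)^\sharp(s) = \Delta(s^{-1})\Delta(s) f(s) = f(s)$, using $\Delta(s^{-1}) = \Delta(s)^{-1}$. The anti-multiplicativity $(f * g)^\sharp = g^\sharp * f^\sharp$ is the one genuinely fiddly calculation: I would write out $(f*g)^\sharp(s) = \Delta(s^{-1}) \int_G f(t) g(t^{-1} s^{-1}) \, d\mu(t)$, perform the substitution $t \mapsto s^{-1} t$ (again tracking the modular factor), and manipulate the resulting integral until it matches $(g^\sharp * f^\sharp)(s)$; the bookkeeping of $\Delta$ factors under the two substitutions is where care is needed.

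The heart of the proposition is part (3), the identity $\lambda_p(f)' = \lambda_{p'}(f^\sharp)$. Here I would exploit that both sides are bounded operators on $L^{p'}(G)$, so by density it suffices to check the identity by pairing against elements of a dense set. Concretely, I would fix $\xi \in L^p(G)$ and $\eta \in L^{p'}(G)$ and compute the dual pairing $\langle \lambda_p(f)' \eta, \xi \rangle = \langle \eta, \lambda_p(f)\xi \rangle = \int_G \eta(s) \overline{\vphantom{()}(f * \xi)(s)} \, d\mu(s)$ (with the appropriate convention for the pairing of $L^{p'}$ with $L^p$), expand the convolution $(f*\xi)(s) = \int_G f(t)\xi(t^{-1}s)\, d\mu(t)$, and then apply Fubini and the substitution $t \mapsto t^{-1}$ to reorganize the double integral into the form $\langle \lambda_{p'}(f^\sharp)\eta, \xi\rangle$. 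The modular function enters precisely through this inversion substitution, and it is exactly what converts $f(t)$ into the $f^\sharp$ appearing on the right.

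The main obstacle I anticipate is not conceptual but bookkeeping: keeping the modular-function factors and the substitutions $t \mapsto t^{-1}$, $t \mapsto s^{-1}t$ straight, and being consistent about the convention for the bilinear (versus sesquilinear) pairing $L^{p'}(G) \times L^p(G) \to \C$, since the paper works with complex scalars and an inconsistent convention here would spuriously introduce complex conjugates. I would set the pairing convention explicitly at the start of the part (3) argument and verify at one representative step that the $\Delta$ factors cancel correctly, so that the identity lands on $\lambda_{p'}(f^\sharp)$ rather than on some twisted variant. Once the non-unimodular substitution is executed cleanly, the result drops out, and the general case follows from the unimodular one by inserting the $\Delta$ factors uniformly.
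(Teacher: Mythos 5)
Your proposal is correct and follows essentially the same route as the paper: the modular substitution for parts (1) and (2), and for part (3) a direct computation of the duality pairing using Fubini and the inversion substitution, which is exactly how the paper proceeds (it verifies $\langle \lambda_p(f^\sharp)\xi,\eta\rangle=\langle\xi,\lambda_{p'}(f)\eta\rangle$ and reads off the adjoint identity). The pairing convention you flag is resolved there by using the bilinear, unconjugated pairing $\langle\xi,\eta\rangle=\int_G\xi(s)\eta(s)\,ds$, which is the right one for the Banach-space adjoint and makes the identity come out with no extraneous conjugates.
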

\begin{proof}
(1). For $f\in L^1(G)$, we have
\[\|f^\sharp\|_1= \int_G \Delta(s^{-1})|f(s^{-1})| \ ds=\int_G f(s)\ ds=\|f\|_1, 
\]
as desired.

(2). The map $\sharp$ is clearly linear and isometric. Moreover, given
$f,g\in L^1(G)$ and $s\in G$, we have 
\begin{align*}
(f\ast g)^\sharp (s) &= \Delta(s^{-1}) (f\ast g)(s^{-1})\\
&= \Delta(s^{-1}) \int_G f(t)g(t^{-1}s^{-1}) \ dt\\
&= \Delta(s^{-1})\int_G f(s^{-1}t)g(t^{-1}) \ dt \\
&= \int_G \Delta(t^{-1}) g(t^{-1})\Delta(s^{-1}t)f(s^{-1}t) \ ds\\
&= (g^\sharp \ast f^\sharp)(s). 
\end{align*}
It follows that $\sharp$ is anti-multiplicative.

(3). Let $f\in L^1(G)$, let $\xi\in L^p(G)$, and let 
$\eta\in L^{p'}(G)$. Then
\begin{align*}
\langle \lambda_p(f^\sharp)(\xi),\eta\rangle 
&=\langle f^\sharp\ast \xi, \eta \rangle \\
&= \int_G (f^\sharp\ast \xi)(s) \eta(s) ds \\
&= \int_G \left( \int_G \Delta(t^{-1}) f^\sharp(st^{-1})\xi(t) dt \right) \eta(s) ds \\
&= \int_G \int_G \Delta(t^{-1}) \Delta(ts^{-1}) f(ts^{-1}) \xi(t) \eta(s) dt ds \\
&= \int_G \left( \int_G \Delta(s^{-1}) f(ts^{-1}) \eta(s) ds \right) \xi(t) dt \\
&= \int_G (f\ast \eta)(t) \xi(t) dt \\
&= \langle \xi, f\ast \eta \rangle\\
&= \langle \xi, \lambda_{p'}(f)(\eta)\rangle.
\end{align*}

It follows that $\lambda_p(f^\sharp)'=\lambda_{p'}(f)$,
as desired.
\end{proof}

\begin{prop}\label{prop:duality}
Let $G$ be a locally compact group, and let $p\in (1,\I)$. 
Then $\sharp\colon L^1(G)\to L^1(G)$ extends to isometric
anti-isomorphisms
\[F^p(G)\cong F^{p'}(G) \ \mbox{ and } \ F^p_\lambda(G)\cong F^{p'}_\lambda(G). \]
\end{prop}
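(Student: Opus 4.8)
The plan is to deduce both isometric anti-isomorphisms from a single principle: show that the order-two linear map $\sharp$ is isometric for the two relevant pairs of norms, and then extend by density. Indeed, once I know that $\sharp$ carries $(L^1(G),\|\cdot\|_{F^p(G)})$ isometrically onto $(L^1(G),\|\cdot\|_{F^{p'}(G)})$ (and likewise for the reduced norms), the fact that $\sharp$ is an anti-multiplicative bijection of order two (\autoref{ex:DualRegRep}(2)) guarantees that it extends to a surjective linear isometry between the two completions, and anti-multiplicativity passes to the extension. So the whole problem reduces to an isometry statement at the level of $L^1(G)$.

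For the reduced algebras this should fall out immediately from the computation already in hand. For $f\in L^1(G)$, using that taking adjoints preserves the operator norm together with \autoref{ex:DualRegRep}(3), I would simply write
\[
\|f\|_{F^p_\lambda(G)}=\|\lambda_p(f)\|=\|\lambda_p(f)'\|=\|\lambda_{p'}(f^\sharp)\|=\|f^\sharp\|_{F^{p'}_\lambda(G)},
\]
which says exactly that $\sharp$ is isometric for the reduced norms, and the extension argument above then applies verbatim.

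For the full algebras the idea is to transport representations back and forth by duality. Given a contractive representation $\varphi\colon L^1(G)\to\B(E)$ on an $L^p$-space $E$, I would define $\varphi^\flat\colon L^1(G)\to\B(E')$ by $\varphi^\flat(f)=\varphi(f^\sharp)'$. Because $\sharp$ and the adjoint are each order-reversing, their composition is order-preserving: using $(f\ast g)^\sharp=g^\sharp\ast f^\sharp$ and $(ab)'=b'a'$ one checks that $\varphi^\flat$ is multiplicative, it is contractive since $\|\varphi^\flat(f)\|=\|\varphi(f^\sharp)\|\le\|f^\sharp\|_1=\|f\|_1$, and $E'$ is an $L^{p'}$-space, so $\varphi^\flat$ is one of the representations used to define $\|\cdot\|_{F^{p'}(G)}$. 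Since $(f^\sharp)^\sharp=f$, one has $\|\varphi^\flat(f^\sharp)\|=\|\varphi(f)'\|=\|\varphi(f)\|$, and taking the supremum over all such $\varphi$ yields $\|f\|_{F^p(G)}\le\|f^\sharp\|_{F^{p'}(G)}$. Running the same construction with the roles of $p$ and $p'$ interchanged (and $(p')'=p$) applied to $f^\sharp$ gives the reverse inequality, so $\|f\|_{F^p(G)}=\|f^\sharp\|_{F^{p'}(G)}$, as required.

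The main obstacle is exactly this full-algebra step, where one must verify that $\varphi^\flat$ genuinely lands among the contractive representations on $L^{p'}$-spaces. Two ingredients do the work: that the dual of an $L^p$-space is an $L^{p'}$-space for $p\in(1,\I)$, so the construction stays in the correct class; and the careful bookkeeping that the two order-reversing operations $\sharp$ and $a\mapsto a'$ compose to an order-\emph{preserving} one, which is what keeps $\varphi^\flat$ a homomorphism rather than an anti-homomorphism. Once these are in place, matching the suprema and passing to completions is routine.
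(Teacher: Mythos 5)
Your proposal is correct and follows essentially the same route as the paper: for the full algebras you transport a contractive representation $\varphi$ on an $L^p$-space to the contractive representation $f\mapsto\varphi(f^\sharp)'$ on the $L^{p'}$-space $E'$ (the paper's $\widetilde{\pi}=\pi'\circ\sharp$) and compare suprema, and for the reduced algebras you invoke $\lambda_p(f)'=\lambda_{p'}(f^\sharp)$ exactly as the paper does. Your explicit two-sided inequality via swapping $p$ and $p'$ is just a slightly more spelled-out version of the paper's appeal to universality.
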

\begin{proof}
We prove it for $F^p(G)$. Let $\pi\colon L^1(G)\to \B(E)$ be a contractive representation
on an $L^p$-space $E$. Denote by $\pi'\colon L^1(G)\to \B(E')$ the linear map given by
$\pi'(f)=\pi(f)'$ for all $f\in L^1(G)$. Then $E'$ is an $L^{p'}$-space, and the map $\pi'$ is contractive
(since an operator and its adjoint have the same norm) and anti-multiplicative.
Hence $\widetilde{\pi}=\pi'\circ\sharp\colon L^1(G)\to \B(E')$ is a contractive representation
satisfying $\|\widetilde{\pi}(f^\sharp)\|=\|\pi(f)\|$ for all $f\in L^1(G)$. Since the norm on $F^p(G)$
is universal with respect to contractive representations of $L^1(G)$ on $L^p$-spaces, it follows that $\sharp$
extends to an isometric anti-isomorphism $F^p(G)\cong F^{p'}(G)$. 

The claim for $F^p_\lambda(G)$ follows immediately from 
part~(2) of \autoref{ex:DualRegRep}. \end{proof}

Since $L^1(G)$ is self-opposite (via the map $\sharp$), it is tempting to guess that the universal
completion $F^p(G)$ is self-opposite. This is however not the case in general, as we explain next.
For a Banach algebra $A$, denote
by $\overline{A}^p$ the universal completion of $A$ with respect to all contractive
representations of $A$ on $L^p$-spaces. (For example, $F^p(G)=\overline{L^1(G)}^p$.)
It is tempting to claim that $\overline{A^{\mathrm{opp}}}^p$ is canonically the
opposite algebra of $\overline{A}^p$. (If this were true, what we said before about
$F^p(G)$ being self-opposite would follow.) Without further assumptions, this does
not seem to be true: the norm on the algebra $\overline{A^{\mathrm{opp}}}^p$ is
constructed using all \emph{anti-}representations of $A$ on $L^p$-spaces, while the norm on
$\overline{A}^p$ is defined using genuine representations. Since there is in general
no way to relate these two families of maps (given a representation of $A$ on an $L^p$-space, it is
not clear how to get an anti-representation of $A$ on some potentially different $L^p$-space), 
we do not see any relationship between
$\overline{A^{\mathrm{opp}}}^p$ and $(\overline{A}^p)^{\mathrm{opp}}$.

\begin{rem}
When $G$ is abelian, the anti-isomorphisms in \autoref{prop:duality} 
are trivially isomorphisms.
Since abelian groups are unimodular, the map $\sharp$ is just inversion on $G$, which in the
abelian case is an isomorphism. By composing again with the inversion, it follows that the identity
on $L^1(G)$ extends to
isometric isomorphisms between all the relevant completions for $p$ and $p'$. 
Except for $p=2$,
it is unclear whether there are any nonabelian groups for which the identity on $L^1(G)$ extends
to an isometric isomorphism $F^p_\lambda(G)\to F^{p'}_\lambda(G)$.
In fact, in his PhD thesis, Herz conjectured that this is never
the case. While the conjecture remains open in general, it has been confirmed for several classes
of groups.
\end{rem}

In view of \autoref{prop:duality}, one can restrict the attention to 
group algebras $F^p(G)$ and $F^p_\lambda(G)$ for 
H\"older exponents $p$ in $[1,2]$. The remaining question is whether the algebras
one gets for different values in $[1,2]$ are really different. This is indeed
the case:

\begin{thm}
Let $G$ be a nontrivial locally compact group, and 
let $p,q\in [1.2]$. Then the following
are equivalent:
\be
\item There is an isometric isomorphism $F^p(G)\cong F^q(G)$;
\item There is an isometric isomorphism $F_\lambda^p(G)\cong F_\lambda^q(G)$;
\item $p=q$.
\ee
\end{thm}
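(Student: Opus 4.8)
The plan is to prove the equivalences by establishing the cycle $(3)\Rightarrow(1)\Rightarrow(2)\Rightarrow(3)$, since $(1)\Rightarrow(2)$ is the delicate direction and $(3)\Rightarrow(1)$ is trivial (the identity on $L^1(G)$ extends to an isometric isomorphism when $p=q$). The heart of the matter is the implication $(2)\Rightarrow(3)$: an isometric isomorphism of the reduced algebras forces $p=q$. The guiding idea is that the exponent $p$ is encoded in the \emph{geometry} of the algebra $F^p_\lambda(G)$, and more precisely in the metric behavior of its invertible isometries. Since $G$ is nontrivial, it contains either a copy of $\Z_2$ or an element of infinite order, so it contains $\Z_2$ or $\Z$ as a subgroup; by \autoref{prop:Sbgps} the inclusion induces an \emph{isometric} embedding $F^p_\lambda(H)\hookrightarrow F^p_\lambda(G)$ for $H\in\{\Z_2,\Z\}$. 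This lets me reduce the general problem to detecting $p$ inside the small algebras $F^p_\lambda(\Z_2)$ or $F^p_\lambda(\Z)$.

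First I would make the reduction to subgroups precise. An isometric isomorphism $F^p_\lambda(G)\cong F^q_\lambda(G)$ must carry the canonical group unitaries (the images $\lambda_p(\delta_s)$ of the point masses, which are precisely the distinguished invertible isometries generating the algebra) to invertible isometries, and I would argue that it respects the subalgebra structure generated by a fixed $s\in G$ together with its inverse. Concretely, if $G$ has an element of order two, restricting along the isometric embedding of \autoref{prop:Sbgps} produces an isometric isomorphism of the two-element-group algebras $F^p_\lambda(\Z_2)\cong F^q_\lambda(\Z_2)$; if every nontrivial element has infinite order one instead lands in $F^p_\lambda(\Z)\cong F^q_\lambda(\Z)$. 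At this point I would invoke the elementary computations flagged in the text just before the statement: Proposition~3.2 of \cite{GarThi_representations_2019} shows that the algebras $F^p_\lambda(\Z_2)$ are pairwise non-isometrically-isomorphic unless $\tfrac1p+\tfrac1q=1$, and Theorem~3.5 of the same reference upgrades this to $F^p_\lambda(\Z)$. Because we have already restricted to $p,q\in[1,2]$, the conjugate-exponent relation $\tfrac1p+\tfrac1q=1$ together with $p,q\le 2$ forces $p=q$ (both exponents can only be conjugate and $\le 2$ when they equal $2$). This yields $(2)\Rightarrow(3)$.

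Next I would dispatch $(1)\Rightarrow(2)$. The natural route is through the canonical contractive map $\kappa_p\colon F^p(G)\to F^p_\lambda(G)$ with dense range introduced after \autoref{thm:GpAmen}. The issue is that an isomorphism of the \emph{full} algebras need not descend to the reduced ones unless one can identify $F^p_\lambda(G)$ as a canonical quotient of $F^p(G)$ in a way preserved by isometric isomorphisms. When $G$ is amenable the two algebras coincide by \autoref{thm:GpAmen}, so $(1)$ and $(2)$ are literally the same statement and there is nothing to prove; the substance is in the non-amenable case. Here I would argue that the reduced algebra is the image of $F^p(G)$ under a \emph{canonically defined} contractive map, and that any isometric isomorphism $F^p(G)\cong F^q(G)$ carries the corresponding kernels to one another, hence induces an isometric isomorphism on the quotients $F^p_\lambda(G)\cong F^q_\lambda(G)$. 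The cleanest formulation uses that both implications can in fact be routed through the subgroup reduction directly: the isometric embeddings of \autoref{prop:Sbgps} exist for both the full and reduced completions, so one can run the same $\Z_2$-or-$\Z$ argument on the full algebras, using the full-algebra analogue of the $F^p(\Z_2)$ and $F^p(\Z)$ computations.

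The main obstacle I anticipate is precisely the passage $(1)\Rightarrow(2)$ in the non-amenable regime, where $F^p(G)$ and $F^p_\lambda(G)$ genuinely differ and one cannot simply appeal to \autoref{thm:GpAmen}. Making ``$F^p_\lambda(G)$ is a canonical isometric-isomorphism-invariant quotient of $F^p(G)$'' rigorous requires an intrinsic, representation-theoretic description of $\ker\kappa_p$ (for instance as the annihilator of the weakly-contained-in-$\lambda_p$ representations), and verifying that this description is transported by any isometric isomorphism. I expect that the safest and most uniform strategy is to avoid this subtlety altogether by proving $(1)\Rightarrow(3)$ and $(2)\Rightarrow(3)$ \emph{in parallel}, each via the subgroup restriction to $\Z_2$ or $\Z$ and the cited elementary rigidity results for those two groups, so that the full and reduced cases never need to be linked to one another. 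The remaining routine points are that the distinguished unitaries are intrinsic to the algebra (they are the invertible isometries $u$ with $u^n=1$ of minimal such order, or, for $\Z$, the generators of the canonical copy of the completion of $\C[t,t^{-1}]$), and that the restriction maps in \autoref{prop:Sbgps} are compatible with isometric isomorphisms; both follow from the functoriality already set up in the preceding subsection.
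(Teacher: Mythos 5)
Your overall strategy --- reduce to a small subgroup ($\Z_2$ or $\Z$) and invoke the rigidity results of Proposition~3.2 and Theorem~3.5 of \cite{GarThi_representations_2019} --- is a sensible one, but the reduction step has genuine gaps. First, \autoref{prop:Sbgps} produces an isometric embedding $F^p_\lambda(H)\hookrightarrow F^p_\lambda(G)$ only for \emph{open} subgroups $H$. For discrete $G$ every subgroup is open, but the theorem is stated for arbitrary nontrivial locally compact groups, and a copy of $\Z_2$ or $\Z$ inside, say, $\mathbb{R}$ or $\T$ is never open; so the reduction is simply unavailable in the non-discrete case. Second, and more seriously, even when the embedding exists you never justify why an \emph{abstract} isometric isomorphism $\Phi\colon F^p_\lambda(G)\to F^q_\lambda(G)$ should carry the canonical copy of $F^p_\lambda(H)$ onto (a copy of) $F^q_\lambda(H)$. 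The only tool the paper provides for this is \autoref{thm:UnitaryGpFpG}, which identifies $\mathrm{Isom}(F^p_\lambda(G))$ with $G\times\T$ and so lets one track where the generators $\texttt{Lt}_s$ go; but that theorem requires $G$ discrete \emph{and} the exponent different from $2$. Since your hypotheses allow $q=2$, the possibility $F^p_\lambda(G)\cong C^*_\lambda(G)$ for $p<2$ needs a separate argument (for instance, comparing the topological structure of the isometry groups), which you do not supply.

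The fallback you propose for $(1)\Rightarrow(3)$ --- running the same subgroup argument on the full algebras in parallel --- fails at the very first step: \autoref{prop:Sbgps} only asserts that $\iota^p\colon F^p(H)\to F^p(G)$ is injective and \emph{contractive}, not isometric (the paper is explicit that only $\iota^p_\lambda$ is isometric), so you cannot conclude that $F^p(\Z_2)$ or $F^p(\Z)$ sits isometrically inside $F^p(G)$. Moreover, there is no analogue of \autoref{thm:UnitaryGpFpG} for $F^p(G)$: its invertible isometries are not computed anywhere, since the algebra is not given by a concrete spatial representation, so the ``restriction'' of $\Phi$ to the subgroup algebra cannot be controlled there either. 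Your alternative route through an intrinsic description of $\ker\kappa_p$ is, as you acknowledge, not carried out. In short, the implication $(2)\Rightarrow(3)$ is plausible for discrete $G$ and $p,q\in[1,2)$ modulo a careful restriction argument, but the locally compact case, the case where one exponent equals $2$, and everything involving the full algebras remain unproven.
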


The theorem above is not just saying that the norms $\|\cdot\|_{F^p_\lambda(G)}$
and $\|\cdot\|_{F^q_\lambda(G)}$ (or $\|\cdot\|_{F^p(G)}$
and $\|\cdot\|_{F^q(G)}$) on $L^1(G)$ are different: it states that there are no
abstract isometric isomorphisms between their completions.

\section{Homomorphisms between convolution algebras}

In this section, 
we aim at describing all contractive, unital homomorphisms
between two $L^p$-operator group algebras. In particular, we want 
to describe all isomorphisms between them. For $p=2$, this is very complicated, 
and we illustrate this through some examples.

\begin{eg}
The groups $\Z_4$ and $\Z_2\times \Z_2$ have the same group \ca, namely $\C^4$.
\end{eg}

\begin{eg}
The group von Neumann algebra of $\Z^n$, for $n=1,\ldots,\I$, is
$L^\I([0,1])$, independently of $n$.
\end{eg}

The following is one of the most important open problems in operator algebras, 
and is known as the ``free factor problem'':

\begin{pbm}
Is there an isomorphism $W^*(\mathbb{F}_2)\cong W^*(\mathbb{F}_3)$?
\end{pbm}

A positive answer to the above problem implies that 
$W^*(\mathbb{F}_2)\cong W^*(\mathbb{F}_n)$ for all $n\in\N$ with $n\geq 2$. 

We will see that, for $p\neq 2$, we can obtain a very satisfactory
description of the homomorphisms between group algebras, which in particular
implies that groups with isomorphic $L^p$-group algebras must themselves be
isomorphic; see \autoref{thm:StructureHomsGpAlgs}. In this section, we
will work exclusively with $F^p_\lambda$, although the results also hold for 
$PM_p(G)$ and $CV_p(G)$. The situation for $F^p(G)$ is unknown.

We begin with some preparatory results. 

\begin{rem}\label{prop:LeftRightCommute}
Let $G$ be a locally compact group and let $p\in [1,\I)$. For $s\in G$, let 
$\texttt{Rt}_s \in \mathrm{Isom}(L^p(G))$ be the invertible isometry given by 
\[\texttt{Rt}_s(\xi)(t)=\xi(ts)\]
for all $\xi\in L^p(G)$ and all $t\in G$. Then 
$\lambda_p(f)\circ \texttt{Rt}_s=\texttt{Rt}_s\circ\lambda_p(f)$ for all $f\in L^1(G)$ and 
all $s\in G$. In particular, every element of $F^p_\lambda(G)$, $PM_p(G)$ or 
$CV_p(G)$ commutes with $\texttt{Rt}_s$.
\end{rem}

\begin{lma}\label{lma:FplambdaGenInvIsom}
Let $G$ be a discrete group and let $p\in [1,\I)$. Define 
$\texttt{Lt} \colon G\to \mathrm{Isom}(\ell^p(G))$ by 
$\texttt{Lt}_s(\xi)(t)=\xi(s^{-1}t)$
for all $s,t\in G$ and all $\xi\in \ell^p(G)$. Then $F^p_\lambda(G)$
is the subalgebra of $\B(\ell^p(G))$ generated by $\{\texttt{Lt}_s\colon s\in G\}$.
\end{lma}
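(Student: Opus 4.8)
The plan is to show the two algebras coincide by proving that the generating set $\{\texttt{Lt}_s : s\in G\}$ sits inside $F^p_\lambda(G)$ and, conversely, that the norm-closed subalgebra generated by these isometries contains all of $\lambda_p(\ell^1(G))$, hence its closure. First I would recall from \autoref{eg:LeftReg} that for a discrete group $\lambda_p(\delta_s) = \texttt{Lt}_s$, since $\delta_s \ast \xi$ evaluated at $t$ gives $\xi(s^{-1}t)$. This single identification is really the crux: it says each generator $\texttt{Lt}_s$ is literally the image under $\lambda_p$ of the point mass $\delta_s \in \ell^1(G)$, and therefore $\texttt{Lt}_s \in \lambda_p(\ell^1(G)) \subseteq F^p_\lambda(G)$. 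Consequently the closed subalgebra generated by $\{\texttt{Lt}_s\}$ is contained in $F^p_\lambda(G)$, which is by definition $\overline{\lambda_p(\ell^1(G))}$.

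For the reverse containment, let $B \subseteq \B(\ell^p(G))$ denote the norm-closed subalgebra generated by $\{\texttt{Lt}_s : s\in G\}$. I would argue that $\lambda_p(\ell^1(G)) \subseteq B$ and then take closures. Given $f \in c_c(G)$, written as a finite sum $f = \sum_{s\in\supp(f)} a_s \delta_s$, linearity and continuity of $\lambda_p$ give
\[
\lambda_p(f) = \sum_{s\in\supp(f)} a_s\, \lambda_p(\delta_s) = \sum_{s\in\supp(f)} a_s\, \texttt{Lt}_s,
\]
which is a finite linear combination of generators and hence lies in $B$. Since $c_c(G)$ is dense in $\ell^1(G)$ and $\lambda_p$ is contractive, $\lambda_p(\ell^1(G)) \subseteq \overline{\lambda_p(c_c(G))} \subseteq B$. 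Taking norm-closures yields $F^p_\lambda(G) = \overline{\lambda_p(\ell^1(G))} \subseteq B$.

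Combining the two inclusions gives $B = F^p_\lambda(G)$, as claimed. I do not expect any serious obstacle here: the proof is essentially the observation that the generators $\texttt{Lt}_s$ are precisely the images of the point masses, together with the density of $c_c(G)$ in $\ell^1(G)$. The only point requiring the slightest care is making sure ``the subalgebra generated by'' is understood in the norm-closed sense (which the statement intends, since $F^p_\lambda(G)$ is itself a norm-closed algebra); without taking closures one would only recover the dense subalgebra $\lambda_p(c_c(G))$ rather than all of $F^p_\lambda(G)$. Everything else is the routine interplay of linearity, continuity of $\lambda_p$, and density, all of which has already been set up in \autoref{ex:ReprL1G} and \autoref{eg:LeftReg}.
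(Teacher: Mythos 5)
Your proof is correct and follows the same route as the paper: identify $\texttt{Lt}_s=\lambda_p(\delta_s)$ via \autoref{eg:LeftReg}, then use density of $c_c(G)$ in $\ell^1(G)$ (as in \autoref{ex:ReprL1G}) to conclude that the closed algebra generated by the $\texttt{Lt}_s$ coincides with $\overline{\lambda_p(\ell^1(G))}=F^p_\lambda(G)$. Your write-up is simply a more explicit version of the paper's two-line argument.
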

\begin{proof}
We have seen in \autoref{eg:LeftReg} that the integrated form of $\texttt{Lt}$ 
is $\lambda_p$. By part~(1) of \autoref{ex:ReprL1G}, 
the image of $\lambda_p$
is generated, as a Banach algebra, by the image of $\texttt{Lt}$, which is what
we wanted to show. 
\end{proof}

For a unital Banach algebra $A$, we write 
\[\mathrm{Isom}(A)=\{v\in A\colon v \mbox{ invertible and } \|v\|=\|v^{-1}\|=1\}.\]
Note that if $A$ is a unital subalgebra of another Banach algebra $B$, then 
$\mathrm{Isom}(A)$ is a subgroup of $\mathrm{Isom}(B)$.

\begin{thm}\label{thm:UnitaryGpFpG}
Let $G$ be a discrete group and let $p\in [1,\I)\setminus\{2\}$.
Then there is a natural identification of topological groups
\[\mathrm{Isom}(F^p_\lambda(G))\cong G\times \T,\]
where $\mathrm{Isom}(F^p_\lambda(G))$ is endowed with the norm topology, and $G\times\T$
is endowed with the product topology.
\end{thm}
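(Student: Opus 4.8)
The plan is to identify the isometry group of $F^p_\lambda(G)$ by showing that any invertible isometry $v$ must be a scalar multiple of some $\texttt{Lt}_s$. The map $G\times\T\to \mathrm{Isom}(F^p_\lambda(G))$ sending $(s,\zeta)\mapsto \zeta\,\texttt{Lt}_s$ is clearly a well-defined group homomorphism (each $\texttt{Lt}_s$ is an invertible isometry by \autoref{lma:FplambdaGenInvIsom}, and scalars of modulus one preserve norms), and it is injective because distinct $s$ give operators acting differently on $\delta_e$. The entire content is therefore surjectivity: every $v\in \mathrm{Isom}(F^p_\lambda(G))$ has this form.

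**The key steps** proceed as follows. First I would regard $v$ as an invertible isometry of the Banach space $\ell^p(G)$, so that \autoref{thm:Lamperti} applies (with the underlying measured algebra being $G$ with counting measure, which is localizable and purely atomic). By \autoref{rem:liftTransf}, the atomic case lets us lift the automorphism $\varphi$ to an honest measure-preserving bijection $\sigma\colon G\to G$, so that
\[
v(\xi)(t)=f(t)\,\xi(\sigma^{-1}(t))
\]
for some $f\in \U(\ell^\I(G))$, i.e.\ $v=m_f u_\sigma$ in the notation of \autoref{prop:IsometriesLp}. The crucial extra information, which pins down the form of $\sigma$ and $f$, is that $v\in F^p_\lambda(G)$ and hence, by \autoref{prop:LeftRightCommute}, $v$ commutes with every right-translation operator $\texttt{Rt}_s$. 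Writing out the commutation relation $v\,\texttt{Rt}_s=\texttt{Rt}_s\,v$ on indicator functions forces $\sigma$ to be a left translation, say $\sigma(t)=s_0 t$ for a fixed $s_0\in G$, and forces the multiplier $f$ to be a constant function of modulus one, say $f\equiv\zeta\in\T$. Thus $v=\zeta\,\texttt{Lt}_{s_0}$, giving surjectivity.

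**For the topology**, I would check that the identification is a homeomorphism. The topology on $\mathrm{Isom}(F^p_\lambda(G))$ is the norm (operator) topology; the formula in part~(4) of \autoref{prop:IsometriesLp} is exactly what controls this, since it computes $\|m_f u_\varphi-m_g u_\psi\|$ and shows that two operators $\zeta\,\texttt{Lt}_s$ and $\zeta'\,\texttt{Lt}_{s'}$ are at distance $2$ whenever $s\neq s'$, and at distance $|\zeta-\zeta'|$ when $s=s'$. This means the norm topology makes $G$ discrete and $\T$ carry its usual topology, matching the product topology on $G\times\T$, so the bijection is a topological isomorphism.

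**The main obstacle** I expect is the commutation-relation step: correctly extracting from $v\,\texttt{Rt}_s=\texttt{Rt}_s\,v$ both that the permutation $\sigma$ is a single left translation and that the multiplier $f$ is globally constant. One must be careful that commuting with \emph{all} right translations is a strong rigidity condition — it should be handled by evaluating both sides on the basis vectors $\delta_t$ and comparing supports and coefficients entry-by-entry, using the transitivity of the right-translation action on $G$ to propagate the value of $f$ at one point to all of $G$. A secondary subtlety is that \autoref{thm:Lamperti} requires $p\neq 2$, which is exactly the hypothesis of the theorem and is what fails for the Hilbert space case where the isometry group is genuinely larger.
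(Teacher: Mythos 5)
Your proposal is correct and follows essentially the same route as the paper: apply Lamperti's theorem to $v$ viewed as an invertible isometry of $\ell^p(G)$ (with counting measure, so the automorphism lifts to a genuine bijection of $G$), then exploit commutation with all right translations from \autoref{prop:LeftRightCommute} to force the permutation to be a single left translation and the multiplier to be constant, and finally read off the topology from the norm formula in part~(4) of \autoref{prop:IsometriesLp}. The only cosmetic difference is that you build the map $G\times\T\to\mathrm{Isom}(F^p_\lambda(G))$ while the paper constructs its inverse, and you leave the commutation computation as an outline where the paper evaluates on $\delta_r$ at $s=1$ explicitly; both are immaterial.
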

\begin{proof}
Let $v\in \mathrm{Isom}(F^p_\lambda(G))$. Since $F^p_\lambda(G)$ is a unital subalgebra
of $\B(\ell^p(G))$, and since $p\neq 2$, by Lamperti's theorem 
\autoref{thm:Lamperti} (and \autoref{rem:liftTransf}) 
there exist a bijection $\varphi\colon G\to G$ and a measurable function
$h\colon G\to S^1$ such that
\[v(\xi)(s)=h(s) \xi(\varphi(s))\]
for all $\xi\in \ell^p(G)$ and all $s\in G$.
By \autoref{prop:LeftRightCommute}, we have 
$v\circ\rho_t=\rho_t\circ v$ for all $t\in G$. 
We evaluate on both sides of this identity:
\[v(\rho_t(\xi))(s)=h(s) (\rho_t(\xi)(\varphi(s)))=h(s) \xi(\varphi(s)t)\]
and 
\[(\rho_t\circ v)(\xi)(s)=v(\xi)(st)=h(st) \xi(\varphi(st)).\]
It follows that $h(s) \xi(\varphi(s)t)=h(st) \xi(\varphi(st))$ for 
all $s,t\in G$. When $s=1$, we get 
$h(1)\xi(\varphi(1)t)=h(t)\xi(\varphi(t))$ for all $t\in G$ and all $\xi\in \ell^p(G)$. Setting $\xi=\delta_r$ for some $r\in G$, we deduce that 
$\varphi(t)=\varphi(1)t$ and $h(1)=h(t)$ for all $t\in G$. 
With $g_v=\varphi(t)$ and $\alpha_v=h(1)$, this shows that 
$\varphi$ is left multiplication by $g_v$, and 
$h$ is the constant function $\alpha_v$. In other words,
\[v(\xi)(s)=\alpha_v \xi(g_vs)\]
for all $\xi\in \ell^p(G)$ and all $s\in G$.

Define $\theta\colon \mathrm{Isom}(F^p_\lambda(G))\to G\times \T$ by 
$\theta(v)=(g_v,\alpha_v)$ for all $v\in\mathrm{Isom}(F^p_\lambda(G))$. It is 
easy to check that $\theta$ is a group homomorphism, and it is clearly 
injective. Moreover, it is surjective by \autoref{lma:FplambdaGenInvIsom}.

The claim about the norm follows from the norm computation in part~(4) 
of~\autoref{prop:IsometriesLp}. 
\end{proof}

\begin{cor}\label{cor:RecoverGroup}
Let $G$ be a discrete group and let $p\in [1,\I)\setminus\{2\}$.
Then there exists a natural identification $G\cong \mathrm{Isom}(F^p_\lambda(G))/\sim_{h}$.
\end{cor}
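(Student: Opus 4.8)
The plan is to read off the corollary directly from the structure of $\mathrm{Isom}(F^p_\lambda(G))$ established in \autoref{thm:UnitaryGpFpG}. That theorem provides an isomorphism of topological groups $\theta\colon \mathrm{Isom}(F^p_\lambda(G))\xrightarrow{\sim} G\times\T$ sending an isometry $v$ to the pair $(g_v,\af_v)$, where $v(\xi)(s)=\af_v\,\xi(g_vs)$. The equivalence relation $\sim_h$ should be \emph{homotopy} of invertible isometries (equivalently, being in the same path-component of $\mathrm{Isom}(F^p_\lambda(G))$ in the norm topology), so the content of the corollary is that the quotient by this relation recovers $G$ as a discrete group.

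First I would make the relation $\sim_h$ explicit: declare $v\sim_h w$ if $v$ and $w$ lie in the same connected component of $\mathrm{Isom}(F^p_\lambda(G))$ with respect to the norm topology. Under the homeomorphism $\theta$, connected components of $\mathrm{Isom}(F^p_\lambda(G))$ correspond to connected components of $G\times\T$. Since $G$ is discrete, its only connected components are singletons, while $\T$ is itself connected; hence the connected components of $G\times\T$ are exactly the sets $\{g\}\times\T$ for $g\in G$. Therefore the set of components is in canonical bijection with $G$ via $\{g\}\times\T\mapsto g$, i.e.\ $(G\times\T)/\sim_h\;\cong G$. Pulling this back through $\theta$ gives $\mathrm{Isom}(F^p_\lambda(G))/\sim_h\;\cong G$ as sets.

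Next I would promote this set-bijection to a group isomorphism. The quotient map $G\times\T\to G$ collapsing the $\T$-factor is a continuous surjective group homomorphism whose kernel $\{1\}\times\T$ is precisely the identity component, so the induced map on components is a group isomorphism onto $G$. Composing with $\theta$ shows that $v\mapsto g_v$ descends to a well-defined group isomorphism $\mathrm{Isom}(F^p_\lambda(G))/\sim_h\;\to G$. Naturality is immediate: the whole construction only uses the left-regular representation and \autoref{prop:LeftRightCommute}, so an isomorphism of groups $G\to H$ transports $\texttt{Lt}$-operators to $\texttt{Lt}$-operators and hence intertwines the two identifications.

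The only delicate point is confirming that the relation $\sim_h$ intended by the author really is ``same path-component,'' and that path-components and connected components coincide here; I expect this to be the main (minor) obstacle. It is resolved by observing that $\theta$ is a homeomorphism onto $G\times\T$ with the product topology, that $\T$ is path-connected, and that the explicit path $t\mapsto \theta^{-1}(g_v,e^{it}\af_v)$ joins any $v$ to the canonical representative $\theta^{-1}(g_v,1)=\texttt{Lt}_{g_v^{-1}}$ inside $\mathrm{Isom}(F^p_\lambda(G))$. Thus each class contains a unique $\texttt{Lt}$-operator, giving the clean identification $G\cong\mathrm{Isom}(F^p_\lambda(G))/\sim_h$.
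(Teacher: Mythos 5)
Your proposal is correct and follows exactly the route the paper intends: the corollary is stated as an immediate consequence of \autoref{thm:UnitaryGpFpG}, with $\sim_h$ meaning homotopy, and the identification $\mathrm{Isom}(F^p_\lambda(G))\cong G\times\T$ (with $G$ discrete and $\T$ connected) reduces the quotient by path-components to the $G$-factor, just as you argue. Your extra care in checking that path-components and connected components coincide, and in exhibiting the explicit path to $\texttt{Lt}_{g_v^{-1}}$, only fills in details the paper leaves implicit.
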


The following is the structure theorem for maps between group algebras that we
were aiming at:

\begin{thm}\label{thm:StructureHomsGpAlgs}
Let $G$ and $H$ be discrete groups, let $p\in [1,\I)\setminus\{2\}$, and 
let $\varphi\colon F^p_\lambda(G)\to F^p_\lambda(H)$ be a unital, contractive
homomorphism. Then:
\be\item There exist group homomorphisms $\theta\colon G\to H$ 
and $\gamma\colon G\to S^1$ such that
\[\varphi(\texttt{Lt}^G_g)=\gamma(g)\texttt{Lt}_{\theta(g)}^H\]
for all $g\in G$.
\item If $p>1$, then the kernel of $\theta$ is amenable. 
\item $\theta$ is injective if and only if $\varphi$ is injective if and only if 
$\varphi$ is isometric. 
\ee

In particular, there is an isometric isomorphism 
$F^p_\lambda(G)\to F^p_\lambda(H)$ if and only if $G\cong H$.
\end{thm}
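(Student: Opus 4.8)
The plan is to exploit the group $\mathrm{Isom}(F^p_\lambda(G))$ as a functorial invariant, leveraging \autoref{thm:UnitaryGpFpG} together with \autoref{lma:FplambdaGenInvIsom}. First I would establish part~(1). Since $\varphi$ is unital and contractive, it maps $\mathrm{Isom}(F^p_\lambda(G))$ into $\mathrm{Isom}(F^p_\lambda(H))$: indeed, if $v$ is invertible with $\|v\|=\|v^{-1}\|=1$, then $\varphi(v)$ is invertible (with inverse $\varphi(v^{-1})$) and both $\varphi(v)$ and its inverse are contractive, so $\varphi(v)\in\mathrm{Isom}(F^p_\lambda(H))$. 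In particular, each generator $\texttt{Lt}^G_g\in\mathrm{Isom}(F^p_\lambda(G))$ has image $\varphi(\texttt{Lt}^G_g)\in\mathrm{Isom}(F^p_\lambda(H))$. Applying the identification $\mathrm{Isom}(F^p_\lambda(H))\cong H\times\T$ from \autoref{thm:UnitaryGpFpG}, I obtain for each $g$ a unique pair $(\theta(g),\gamma(g))\in H\times\T$ with $\varphi(\texttt{Lt}^G_g)=\gamma(g)\texttt{Lt}^H_{\theta(g)}$. The maps $\theta\colon G\to H$ and $\gamma\colon G\to S^1$ are group homomorphisms because $g\mapsto\varphi(\texttt{Lt}^G_g)$ is a homomorphism from $G$ into $\mathrm{Isom}(F^p_\lambda(H))$ (using $\texttt{Lt}^G_{gg'}=\texttt{Lt}^G_g\texttt{Lt}^G_{g'}$ and multiplicativity of $\varphi$), and the isomorphism $\mathrm{Isom}(F^p_\lambda(H))\cong H\times\T$ is a homomorphism of groups; projecting onto each factor yields that $\theta$ and $\gamma$ are homomorphisms.

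For the final ``in particular'' statement, the forward direction is immediate: if $G\cong H$ via an isomorphism $\psi$, then $\psi$ induces an isometric isomorphism $F^p_\lambda(G)\to F^p_\lambda(H)$ by functoriality of the reduced group algebra construction (mapping $\texttt{Lt}^G_g\mapsto\texttt{Lt}^H_{\psi(g)}$ and extending; this is isometric since it is spatially implemented by the bijection $\psi$ of the underlying sets, cf.\ \autoref{lma:FplambdaGenInvIsom} and the norm formula in \autoref{prop:IsometriesLp}(4)). The substantive direction is the converse: given an isometric isomorphism $\varphi\colon F^p_\lambda(G)\to F^p_\lambda(H)$, I must produce a group isomorphism $G\cong H$. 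Here I would apply part~(1) to $\varphi$ to get homomorphisms $\theta\colon G\to H$, $\gamma\colon G\to S^1$, and apply it again to $\varphi^{-1}$ (also unital and isometric) to get $\theta'\colon H\to G$. Using part~(3) of the theorem, the fact that $\varphi$ is injective forces $\theta$ to be injective; symmetrically $\theta'$ is injective. To conclude that $\theta$ is a bijection, I would verify that $\theta'$ is a two-sided inverse of $\theta$: composing $\varphi^{-1}\circ\varphi=\id$ and tracking the effect on generators $\texttt{Lt}^G_g$, the phase factors $\gamma$ cancel in the underlying group component, leaving $\theta'\circ\theta=\id_G$ and, symmetrically, $\theta\circ\theta'=\id_H$. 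Hence $\theta$ is a group isomorphism $G\cong H$.

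The main obstacle I anticipate is the rigorous derivation of part~(1) itself, specifically the passage from ``$\varphi$ sends invertible isometries to invertible isometries'' to ``the phase and group-element data assemble into genuine homomorphisms of the correct form.'' The subtlety is that \autoref{thm:UnitaryGpFpG} only describes $\mathrm{Isom}(F^p_\lambda(H))$ as an abstract group $H\times\T$; one must check that the resulting map $g\mapsto(\theta(g),\gamma(g))$ is not merely a set map but respects the group structure, and that composing with $\varphi^{-1}$ gives inverse data at the level of $G$ and $H$ (rather than only up to the $\T$-ambiguity). I expect this is genuinely the technical heart: the two theorems \autoref{thm:UnitaryGpFpG} and \autoref{lma:FplambdaGenInvIsom} do the hard analytic work (identifying the isometry group and showing that the $\texttt{Lt}_s$ generate), so the remaining argument is essentially a diagram-chase verifying that the induced maps on isometry groups are compatible homomorphisms and that the phase cocycle $\gamma$ plays no role in recovering the group isomorphism. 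Parts~(2) and~(3) would then require separate arguments—(2) via the amenability criterion of \autoref{thm:GpAmen} applied to the restriction of $\varphi$ along the kernel of $\theta$, and (3) via \autoref{cor:RecoverGroup}, which recovers $G$ from $\mathrm{Isom}(F^p_\lambda(G))$ modulo phases—but these are subordinate to the structural statement in~(1).
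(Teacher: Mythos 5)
Your part~(1) is correct and is the same argument as the paper's: since $\varphi$ is unital and contractive it carries $\mathrm{Isom}(F^p_\lambda(G))$ into $\mathrm{Isom}(F^p_\lambda(H))$, and \autoref{thm:UnitaryGpFpG} then produces $\theta(g)$ and $\gamma(g)$, with multiplicativity of $\varphi$ forcing both to be homomorphisms. Your derivation of the final assertion (applying (1) to $\varphi$ and $\varphi^{-1}$ and checking that $\theta'\circ\theta=\mathrm{id}$, the phases cancelling) is also sound and fleshes out what the paper compresses into a citation of \autoref{cor:RecoverGroup}.

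The genuine gap is that parts~(2) and~(3) are asserted to be ``subordinate'' but are not proved, and the results you point to do not yield them. For (2), \autoref{thm:GpAmen} concerns the map $\kappa_p\colon F^p(N)\to F^p_\lambda(N)$ and says nothing directly about $\ker\theta$; the tool the paper uses is part~(2) of \autoref{prop:Quotients}: since the range of $\varphi$ lies in the closed span of $\{\texttt{Lt}^H_{\theta(g)}\}$, twisting by $\gamma$ produces a contractive homomorphism with dense range $F^p_\lambda(G)\to F^p_\lambda(G/\ker\theta)$, and for $p>1$ such a map exists only if $\ker\theta$ is amenable. (Your idea of restricting $\varphi$ to the kernel can be made to work --- one obtains a character on $F^p_\lambda(\ker\theta)$ --- but extracting amenability from that is exactly the weak-containment argument behind \autoref{prop:Quotients}, not \autoref{thm:GpAmen}.) For (3), \autoref{cor:RecoverGroup} recovers $G$ from $\mathrm{Isom}(F^p_\lambda(G))$ but does not address the equivalence ``$\theta$ injective $\Leftrightarrow$ $\varphi$ isometric''. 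The nontrivial implication (injectivity of $\theta$ forces $\varphi$ to be isometric) is obtained in the paper by factoring $\varphi$ as the gauge automorphism $\texttt{Lt}^G_g\mapsto\gamma(g)\texttt{Lt}^G_g$ of $F^p_\lambda(G)$, which is an isometric isomorphism, followed by the map induced by the subgroup inclusion $\theta(G)\leq H$, which is isometric by \autoref{prop:Sbgps}. This factorization is the missing ingredient; without it, part~(3) --- and hence the full statement of the theorem --- remains unestablished in your write-up.
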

\begin{proof}
(1). Given $g\in G$, the element $\varphi(\texttt{Lt}^G_g)$ is an
invertible isometry in $F^p_\lambda(H)$. By \autoref{thm:UnitaryGpFpG},
there exist $\gamma(g)\in S^1$ and $\theta(g)\in H$ satisfying the 
condition in the statement. Denote by $\gamma\colon G\to S^1$ and
$\theta\colon G\to H$ the resulting maps. In order to show that they
are group homomorphisms, let $g_1,g_2\in G$. Then
\begin{align*}
\gamma(g_1g_2)\texttt{Lt}^H_{\theta(g_1g_2)}&=\varphi(\texttt{Lt}^G_{g_1g_2})\\
&=\varphi(\texttt{Lt}^G_{g_1})\varphi(\texttt{Lt}^G_{g_2})\\
&=\gamma(g_1) \texttt{Lt}^H_{\theta(g_1)}\gamma(g_2)\texttt{Lt}^H_{\theta(g_2)}\\
&=\gamma(g_1)\gamma(g_2) \texttt{Lt}^H_{\theta(g_1)\theta(g_2)}.
\end{align*}
It follows that $\gamma(g_1g_2)=\gamma(g_1)\gamma(g_2)$, and similarly 
for $\theta$, as desired.

(2). This is a consequence of part~(2) of \autoref{prop:Quotients}.

(3). It is clear that if $\varphi$ is isometric, then it is injective, 
and in this case $\theta$ is injective. Conversely, assume that $\theta$
is injective. Then $\varphi$ can be written as the following composition
\[F^p_\lambda(G)\to F^p_\lambda(G)\to F^p_\lambda(H),\]
where the first map is determined by 
$\texttt{Lt}^G_{g}\mapsto \gamma(g)\texttt{Lt}^G_{g}$ for all $g\in G$,
and where the second map is the canonical one induced by the 
embedding $\theta\colon G\hookrightarrow H$. The first of these is easily
seen to be an isometric isomorphism, while the second one is isometric
by \autoref{prop:Sbgps}. This proves (3). 

The last assertion follows from \autoref{cor:RecoverGroup}.
\end{proof}

\begin{rem}
In the context of \autoref{thm:StructureHomsGpAlgs},
it can happen that $F^p_\lambda(G)$ is isomorphic, but not isometrically,
to $F^p_\lambda(H)$, even though $G$ and $H$ are not isomorphic. The 
smallest such example is obtained by taking $G=\Z_4$ and $H=\Z_2\times \Z_2$. 
Then $G$ and $H$ are clearly not isomorphic, but $F^p_\lambda(G)$ and 
$F^p_\lambda(H)$ are both isomorphic (although not isometrically, unless
$p=2$) to 
$\C^4$.
\end{rem}

\section{Spatial partial isometries and graph algebras}\label{sec:Cuntz}

We begin with a general definition.

\begin{df}\label{df:PartialIso}
Let $A$ be an algebra. We say that an element $s\in A$ is a 
\emph{partial isometry} if there exists $t\in A$ such that $st$ and $ts$
are idempotents. 
\end{df}

The prototypical example of a partial isometry in $\B(\Hi)$ is given by 
a surjective isometry between subspaces; in fact, these are precisely the partial
isometries of norm one. 

For some purposes in $L^p$-operator algebras, one needs to work with 
partial isometries that are in some sense ``spatially implemented'', similarly
to how invertible isometries are spatially implemented by Lamperti's theorem.
This motivates the following definition.

\begin{df}\label{df:SpatialPartialIso}
Let $(\mathcal{A},\mu)$ be a localizable measured algebra. Given $E\in\mathcal{A}$,
we set $\mathcal{A}_E=\{E\cap F\colon F\in\mathcal{A}\}$ and let $\mu_E$
denote the restriction of $\mu$ to $\mathcal{A}_E$. Then $(\mathcal{A}_E,\mu_E)$
is also localizable. Observe that $L^p(\mu)\cong L^p(\mu_E)\oplus L^p(\mu_{E^c})$.
 
Given $E,F\in\mathcal{A}$, given an isomorphism 
$\varphi\colon \mathcal{A}_E \to \mathcal{A}_F$ of Boolean algebras, and given
$f\in \U(L^\I(\mu_F))$, the formula 
\[s(\xi)=f(\xi\circ\varphi) 
\left(\frac{d(\mu_E\circ\varphi^{-1})}{d\mu_F}\right)^{1/p}\]
for all $\xi\in L^p(\mu_E)$, defines an isometric isomorphism 
$L^p(\mu_E)\to L^p(\mu_F)$,
which can be regarded as a contractive map $s\colon L^p(\mu)\to L^p(\mu)$ (vanishing
on $L^p(\mu_{E^c})$). We 
call this map the \emph{spatial partial isometry} 
associated to $(E,F,\varphi, f)$. 
\end{df}

Spatial partial isometries are partial isometries in the sense of
\autoref{df:PartialIso}. In fact, the element $t$ is uniquely determined
and is the spatial partial isometry associated to $(F,E,\varphi^{-1},
\overline{f}\circ\varphi^{-1})$. (For $p=2$, this is just the 
adjoint of $s$.) Moreover, $st$ is the multiplication operator by the 
characteristic function of $F$, and similarly $ts$ is the multiplication
operator by the characteristic function of $E$. The following observation
is immediate, and will be needed in the sequel.

\begin{rem}\label{rem:HermIdemp}
Let $e\in \B(L^p(\mu))$ be a spatial idempotent. Then there exists 
$E\in\mathcal{A}$ such that $e$ is the multiplication operator by the 
characteristic function of $E$.
\end{rem}

Spatiality for partial isometries is defined in terms of the underlying measured
algebra. However, for $p\neq 2$ and as long as the measured algebra is 
localizable, the notion is independent of the underlying algebra.

\begin{prop}
Let $(\mathcal{A},\mu)$ and $(\mathcal{B},\nu)$ be localizable measure algebras, 
let $p\in [1,\I)\setminus\{2\}$. Suppose that there exists an isometric
isomorphism $u\colon L^p(\mu)\to L^p(\nu)$, and 
define an isometric isomorphism 
$\theta\colon \B(L^p(\mu))\to \B(L^p(\nu))$ by $\theta(a)=u\circ a\circ u^{-1}$
for all $a\in \B(L^p(\mu))$. Then an operator $s\in \B(L^p(\mu))$ is a spatial
partial isometry if and only if $\theta(s)$ is a spatial partial isometry. Moreover,
$t\in \B(L^p(\mu))$ is the reverse of $s$ if and only if $\theta(t)$ is
the reverse of $\theta(s)$.
\end{prop}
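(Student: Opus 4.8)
The plan is to produce an \emph{intrinsic} description of spatial partial isometries---one phrased only through the Banach-algebra operations, operator norms, and the isometric action of $u$---so that preservation under $\theta$ becomes transparent. First I would record that $\theta$ is an isometric algebra isomorphism: $\theta(ab)=\theta(a)\theta(b)$, $\theta(\mathrm{id})=\mathrm{id}$, and $\|\theta(a)\|=\|uau^{-1}\|=\|a\|$ since $u$ is an isometric isomorphism. In particular $\theta$ carries idempotents to idempotents and preserves all norm relations; the same holds for $\theta^{-1}$, which is conjugation by $u^{-1}$, so it suffices to prove each ``only if'' and then apply the statement to $u^{-1}$.

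The crucial preliminary step is to characterize spatial idempotents intrinsically. By \autoref{rem:HermIdemp} a spatial idempotent is multiplication by $\mathbbm{1}_E$, and hence satisfies $\|e\|\leq 1$ and $\|\mathrm{id}-e\|\leq 1$ (it is \emph{bicontractive}). I would prove the converse for $p\neq 2$: a bicontractive idempotent is spatial. This is where the hypothesis $p\neq 2$ enters essentially, and I expect it to be the main obstacle. The idea is to show that $e$ splits $L^p(\mu)$ into two disjointly supported bands: for $\zeta\in eL^p(\mu)$ and $\omega\in(\mathrm{id}-e)L^p(\mu)$, bicontractivity forces $e(\zeta\pm\omega)=\zeta$ and $(\mathrm{id}-e)(\zeta\pm\omega)=\pm\omega$ to have norm at most $\|\zeta\pm\omega\|$, and feeding this into the equality case of Clarkson's inequality (\autoref{lma:Clarkson}) yields $\zeta\omega=0$. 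Thus $e$ is the band projection onto a measurable set, proving spatiality. Since bicontractivity is manifestly preserved by the isometric isomorphism $\theta$, it follows that $\theta$ maps spatial idempotents to spatial idempotents.

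With this in hand I would characterize spatial partial isometries. Writing $e=ts$ and $f=st$, a spatial partial isometry with reverse $t$ satisfies: (i) $e$ and $f$ are spatial (equivalently bicontractive) idempotents; (ii) the support relations $se=s=fs$ and $tf=t=et$; and (iii) $\|s\xi\|=\|e\xi\|$ for all $\xi\in L^p(\mu)$, together with the symmetric condition for $t$. Conditions (i) and (ii) are algebraic-plus-norm and hence $\theta$-invariant; condition (iii) transports because $u$ is isometric, as $\|\theta(s)\eta\|=\|s(u^{-1}\eta)\|=\|e(u^{-1}\eta)\|=\|\theta(e)\eta\|$ for all $\eta\in L^p(\nu)$. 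Conversely, (i)--(iii) force $s$ to vanish off $eL^p(\mu)$, to map into $fL^p(\mu)$, and---using $st=f$ for surjectivity---to restrict to an isometric isomorphism $eL^p(\mu)\to fL^p(\mu)$ of $L^p$-spaces of localizable measure algebras; by the between-spaces form of Lamperti's theorem (\autoref{thm:Lamperti}) this restriction is spatial, so $s$ is a spatial partial isometry. Since $\theta$ preserves (i)--(iii) and satisfies $\theta(s)\theta(t)=\theta(st)$ and $\theta(t)\theta(s)=\theta(ts)$, the element $\theta(s)$ meets the same characterization with reverse $\theta(t)$, which gives the first assertion. For the ``reverse'' claim, the uniqueness of the reverse noted after \autoref{df:SpatialPartialIso}, together with the $\theta$-invariance of the defining relations (that $st$ and $ts$ are spatial idempotents) and of conditions (ii)--(iii), shows that $t$ is the reverse of $s$ if and only if $\theta(t)$ is the reverse of $\theta(s)$.
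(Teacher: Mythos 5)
There is a genuine gap: the lemma your whole argument rests on --- that for $p\neq 2$ every bicontractive idempotent on an $L^p$-space is spatial --- is false. The idempotent $e=\frac{1}{2}\begin{bmatrix}1&1\\1&1\end{bmatrix}$ on $\ell^p_2$ (which the paper itself exhibits as a contractive, non-spatial idempotent) is bicontractive for every $p\in[1,\I)$: both $e$ and $1-e$ send $(x,y)$ to a vector of $p$-norm $2^{1/p}\tfrac{|x\pm y|}{2}$, which is at most $(|x|^p+|y|^p)^{1/p}$ by the power-mean inequality. Your Clarkson argument cannot rule this out, because bicontractivity only yields $\|\zeta\|,\|\omega\|\leq\|\zeta\pm\omega\|$, and these inequalities do not force the equality case of \autoref{lma:Clarkson}: for $\zeta=(1,1)\in e\ell^1_2$ and $\omega=(1,-1)\in(1-e)\ell^1_2$ one has $\|\zeta\|_1=\|\omega\|_1=\|\zeta+\omega\|_1=\|\zeta-\omega\|_1=2$, so all of your inequalities hold, Clarkson's inequality is strict, and yet $\zeta\omega\neq 0$. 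The gap propagates to your characterization of spatial partial isometries: taking $s=t=e$ as above satisfies your conditions (i)--(iii) verbatim with ``bicontractive'' in place of ``spatial'', yet $s$ is not a spatial partial isometry; moreover, your appeal to the two-space Lamperti theorem requires $eL^p(\mu)$ and $fL^p(\mu)$ to be bands $L^p(\mu_E)$ and $L^p(\mu_F)$, which is exactly the spatiality of $e$ and $f$ that has not been established.

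The strategy of finding a $\theta$-invariant characterization is reasonable and can be repaired --- the correct intrinsic description of a spatial idempotent for $p\neq 2$ is that it is a \emph{hermitian} idempotent, i.e.\ one satisfying $\|e^{ite}\|=1$ for all $t\in\mathbb{R}$, a condition visibly preserved by isometric isomorphisms (note that $\|e^{i\pi e/2}\|=\sqrt{2}$ for the example above when $p=1$) --- but proving that characterization is itself a nontrivial piece of work. The paper's own proof is much shorter and avoids any intrinsic characterization: it applies the two-space version of \autoref{thm:Lamperti} to $u$ itself, writing $u(\xi)=h(\varphi\circ\xi)\bigl(\tfrac{d(\mu\circ\varphi^{-1})}{d\nu}\bigr)^{1/p}$ for a Boolean isomorphism $\varphi\colon\mathcal{A}\to\mathcal{B}$ and $h\in\U(L^\I(\nu))$, and then checks directly that conjugation by $u$ carries the spatial partial isometry with quadruple $(E,F,\sigma,f)$ to the one with quadruple $(\varphi(E),\varphi(F),\varphi\circ\sigma\circ\varphi^{-1},f(h\circ\varphi^{-1}))$.
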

\begin{proof}
We will use that \autoref{thm:Lamperti} 
is valid for 
isometric isomorphisms between \emph{different} $L^p$-spaces.
Applied to $u$, this gives the existence of $h\in \mathcal{U}(L^\I(\nu))$
and a Boolean algebra isomorphism $\varphi\colon \mathcal{A}\to \mathcal{B}$ such that
\[u(\xi)=h(\varphi\circ\xi) \left(\frac{d(\mu\circ\varphi^{-1})}{d\nu}\right)^{1/p} 
\]
for all $\xi\in L^p(\mu)$. 
If $s$ is the spatial partial isometry associated to the quadruple
$(E,F,\sigma, f)$, it is easy to check that $\theta(s)$ is the 
spatial partial isometry associated to
\[\left(\varphi(E),\varphi(F),\varphi\circ\sigma\circ\varphi^{-1},f (h\circ\varphi^{-1})\right).\]

The converse, as well as the assertions regarding the reverse of $s$, are
proved analogously.
\end{proof}

In view of the previous proposition, for $p\neq 2$, 
it makes sense to say that an operator $s$ on an 
$L^p$-space $E$ is spatial without fixing a presentation of $E$ as $L^p(\mu)$ for
some localizable measure $\mu$.

An idempotent is always a partial isometry (take $s=t$). An idempotent which
is additionally a spatial partial isometry is called a \emph{spatial idempotent}.

\begin{prop}
Let $p\in [1,\I)\setminus\{2\}$, let $E$ be an $L^p$-space, 
and let $s\in B(L^p(\mu))$ be a partial isometry. Then $s$ is a spatial partial isometry
if and only if it is contractive and there exists $t\in \B(E)$ contractive 
such that $ts$ and $st$ are spatial idempotents.
\end{prop}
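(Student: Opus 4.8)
The two directions are of quite different character, and I would treat them separately.

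The forward implication is essentially a restatement of the facts recorded just after \autoref{df:SpatialPartialIso}. If $s$ is the spatial partial isometry associated to a quadruple $(E,F,\varphi,f)$, then it is contractive by construction; its reverse $t$, namely the spatial partial isometry attached to $(F,E,\varphi^{-1},\overline{f}\circ\varphi^{-1})$, is contractive for the same reason; and the products $ts$ and $st$ are the multiplication operators by the characteristic functions of $E$ and $F$ respectively, hence spatial idempotents. So I would dispose of this direction in a sentence.

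For the converse, suppose $s$ is contractive and admits a contractive reverse $t$ with $ts$ and $st$ spatial idempotents. By \autoref{rem:HermIdemp} I may write $ts=m_{\mathbbm{1}_E}$ and $st=m_{\mathbbm{1}_F}$ for suitable $E,F\in\mathcal{A}$, so that these are the projections onto $L^p(\mu_E)$ and $L^p(\mu_F)$. The plan is to show that $s$ carries $L^p(\mu_E)$ isometrically onto $L^p(\mu_F)$, vanishes on $L^p(\mu_{E^c})$, and then to read off its explicit form from Lamperti's theorem. First, for $\xi\in L^p(\mu_E)$ one has $\xi=m_{\mathbbm{1}_E}\xi=ts\xi$, whence $\|\xi\|=\|ts\xi\|\le\|s\xi\|\le\|\xi\|$ using $\|t\|,\|s\|\le 1$; thus $s$ is isometric on $L^p(\mu_E)$. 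Moreover $m_{\mathbbm{1}_F}s\xi=st\,s\xi=s\,ts\xi=s\,m_{\mathbbm{1}_E}\xi=s\xi$, so $s$ maps $L^p(\mu_E)$ into $L^p(\mu_F)$; and for $\eta\in L^p(\mu_F)$ the vector $t\eta$ lies in $L^p(\mu_E)$ (because $m_{\mathbbm{1}_E}t\eta=ts\,t\eta=t\,st\,\eta=t\eta$) and satisfies $s(t\eta)=st\,\eta=\eta$, giving surjectivity. Hence $s_0:=s|_{L^p(\mu_E)}\colon L^p(\mu_E)\to L^p(\mu_F)$ is a surjective isometry between $L^p$-spaces, and I would invoke \autoref{thm:Lamperti} in the form valid for isometric isomorphisms between \emph{different} $L^p$-spaces (as in the preceding proposition) to produce a Boolean isomorphism $\varphi\colon\mathcal{A}_E\to\mathcal{A}_F$ and $f\in\U(L^\I(\mu_F))$ for which $s_0$ is exactly the formula defining the spatial partial isometry attached to $(E,F,\varphi,f)$.

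It remains to see that $s$ agrees with this spatial partial isometry, that is, that $s$ vanishes on $L^p(\mu_{E^c})$; this I expect to be the main obstacle, and it is the reason the hypotheses cannot be weakened to ``$ts,st$ idempotent'' alone. Here I use that $t$ is a genuine reverse of $s$, so that $sts=s$: rewriting $sts=s(ts)=s\,m_{\mathbbm{1}_E}$, the relation $s=s\,m_{\mathbbm{1}_E}$ forces $s\xi'=s\,m_{\mathbbm{1}_E}\xi'=0$ for every $\xi'\in L^p(\mu_{E^c})$. Without the reverse relation $sts=s$ the operator $s$ could act as an arbitrary contraction on $L^p(\mu_{E^c})$ while leaving $ts$ and $st$ unchanged (for instance if $t$ already vanishes there), and such an $s$ need not be spatial at all for $p\neq 2$; so the crux is to extract, from the generalized-inverse relation together with the spatiality of the two range projections $m_{\mathbbm{1}_E}$ and $m_{\mathbbm{1}_F}$, both the isometry of $s$ on its initial space and the vanishing of $s$ off that space. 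Once these are secured, \autoref{thm:Lamperti} converts the abstract surjective isometry $s_0$ into the concrete datum $(E,F,\varphi,f)$, completing the argument.
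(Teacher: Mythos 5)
Your proof is correct and follows the same route as the paper's: restrict $s$ to $L^p(\mu_E)$, show it is an invertible isometry onto $L^p(\mu_F)$, and invoke Lamperti's theorem. In fact you are more careful than the paper at the one delicate point: the published proof simply asserts that ``$s$ restricts to the zero map on $L^p(\mu_{E^c})$'', whereas you correctly observe that this requires the reverse relation $sts=s$ (which does not follow from $ts$ and $st$ merely being the spatial idempotents $m_{\mathbbm{1}_E}$ and $m_{\mathbbm{1}_F}$ --- one can perturb $s$ on $L^p(\mu_{E^c})$ by a non-spatial contraction landing in $\ker t$ without changing either product), and you supply the one-line argument $s=s(ts)=s\,m_{\mathbbm{1}_E}$ that the paper omits.
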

\begin{proof}
If $s$ is a spatial partial isometry, then it is contractive and 
its reverse $t$ satisfies the statement.

Conversely, suppose that $s$ is contractive and that there exist 
$t\in \B(E)$ and $E,F\in\mathcal{A}$ such that $ts=m_{\mathbbm{1}_E}$ and $st=m_{\mathbbm{1}_F}$
are spatial idempotents (see \autoref{rem:HermIdemp}).
Note that 
\[L^p(\mu)\cong L^p(\mu_E)\oplus L^p(\mu_{E^c})\cong L^p(\mu_F)\oplus L^p(\mu_{F^c}).\]
Moreover, $s$ restricts to the zero map on $L^p(\mu_{E^c})$ and 
the restriction of $s$ to $L^p(\mu_E)$ is a contractive map
$L^p(\mu_E)\to L^p(\mu_F)$. Applying a similar reasoning to $t$, we 
deduce that $ts$ is the identity on $L^p(\mu_E)$ and $ts$ is the 
identity on $L^p(\mu_F)$. It follows that $s|_{L^p(\mu_E)}$ is an invertible isometry
$L^p(\mu_E)\to L^p(\mu_F)$. Applying Lamperti's theorem, we 
obtain a spatial realization of $s|_{L^p(\mu_E)}$ (as an invertible
isometry) which then
gives a spatial realization of $s$ as a spatial partial isometry.
\end{proof}

Not all contractive partial isometries on an $L^p$-space, for $p\neq 2$, 
are spatial 
(unlike the case of invertible isometries, by Lamperti's theorem).
For example, the contractive idempotent $\frac{1}{2}\begin{bmatrix}
    1 & 1 \\
    1 & 1
  \end{bmatrix}\in M_2^p$ is not spatial.
  
\subsection{Spatial representations of matrix algebras}
For $n\in\N$, we denote by $c_n$ the counting measure on $\{1,\ldots,n\}$.
Note that $L^p(c_n)=\ell^p_n$ for all $p\in [1,\I)$.
Spatial partial isometries can be used to characterize the canonical matrix norms:

\begin{prop}\label{prop:CharactMnSpatial}
Let $n\in\N$, let $p\in [1,\I)\setminus\{2\}$, 
let $E$ be an $L^p$-space, and let $\varphi\colon M^p_n\to\B(E)$ be a (not necessarily
contractive) unital representation. Then the following are equivalent:
\be
\item $\varphi$ is isometric;
\item $\|\varphi(e_{j,k})\|=1$ and $\varphi(e_{j,j})$ is a spatial idempotent;
\item $\varphi(e_{j,k})$ is a spatial partial isometry, for all $j,k=1,\ldots,n$;
\item $\varphi(e_{j,k})$ is a spatial partial isometry with reverse 
$\varphi(e_{k,j})$, for all $j,k=1,\ldots,n$;
\item There exist another $L^p$-space $F$ and an isometric isomorphism
\[u\colon F\otimes \ell^p_n\to E\]
such that $\varphi(a)(u(\xi\otimes \eta))=u(\xi\otimes a\eta)$ for all
$\xi\in F$ and all $\eta\in \ell^p_n$. In other words, $\varphi$
is (conjugate to) a dilation of the canonical representation of $M_n^p$.
\ee
\end{prop}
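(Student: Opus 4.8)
The plan is to run the cycle $(1)\Rightarrow(2)\Rightarrow(4)\Rightarrow(3)\Rightarrow(2)$ together with $(2)\Rightarrow(5)\Rightarrow(1)$, so that all five conditions become equivalent. Two structural facts are free from the outset. First, $M_n$ is simple, so the unital (hence nonzero) homomorphism $\varphi$ is automatically injective; in particular $\varphi(e_{j,k})\neq 0$ for all $j,k$. Second, since $\varphi$ is an algebra homomorphism, the idempotents $q_j:=\varphi(e_{j,j})$ are mutually orthogonal and sum to $\id_E$, and each $\varphi(e_{j,k})$ is a partial isometry in the sense of \autoref{df:PartialIso} with reverse $\varphi(e_{k,j})$; thus throughout, the only real content of conditions (2)--(4) is \emph{spatiality}.

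The heart of the argument is $(1)\Rightarrow(2)$, and this is the step I expect to be the main obstacle: isometricity of $\varphi$ must be upgraded to spatiality of the diagonal idempotents, and this genuinely requires the full matricial structure. Indeed, bicontractivity alone does not suffice --- the idempotent $\tfrac12\left(\begin{smallmatrix}1&1\\1&1\end{smallmatrix}\right)\in M_2^p$ is bicontractive but not spatial --- so one cannot argue coordinate by coordinate. Here is the route I would take. For $\lambda\in\T^n$ let $a_\lambda=\diag(\lambda_1,\dots,\lambda_n)\in M_n^p$; this is an invertible isometry of $\ell^p_n$ with $\|a_\lambda\|=\|a_\lambda^{-1}\|=1$, so $\varphi(a_\lambda)$ is an invertible isometry of $E$, and by \autoref{thm:Lamperti} it is spatial: $\varphi(a_\lambda)=m_{g_\lambda}u_{\rho_\lambda}$ for unique $g_\lambda\in\U(L^\I(\mu))$ and $\rho_\lambda\in\Aut(\mathcal{A})$. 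The map $\lambda\mapsto\varphi(a_\lambda)$ is norm-continuous (it is the composition of the continuous linear map $\lambda\mapsto a_\lambda$ with the bounded map $\varphi$), while part~(4) of \autoref{prop:IsometriesLp} shows that $\rho_\lambda\neq\rho_{\lambda'}$ forces $\|\varphi(a_\lambda)-\varphi(a_{\lambda'})\|=2$. Hence $\lambda\mapsto\rho_\lambda$ has open level sets, so it is constant on the connected group $\T^n$, and since $\rho_{(1,\dots,1)}=\id$ we conclude $\rho_\lambda=\id$ for all $\lambda$. Thus every $\varphi(a_\lambda)$ is a multiplication operator. Taking $\lambda=r_j$ with a single $-1$ in slot $j$, we get $\varphi(r_j)=m_g$ with $g^2=1$, whence $g=\mathbbm{1}_{A}-\mathbbm{1}_{A^c}$ and $q_j=\tfrac12(\id-\varphi(r_j))=m_{\mathbbm{1}_{A^c}}$ is multiplication by a characteristic function, i.e.\ a spatial idempotent; together with $\|\varphi(e_{j,k})\|=\|e_{j,k}\|=1$ this yields (2).

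Next I would close the loop among (2), (3), (4). By \autoref{rem:HermIdemp}, condition (2) gives a measurable partition $(E_j)_j$ with $q_j=m_{\mathbbm{1}_{E_j}}$, so that $L^p(\mu)=\bigoplus_j L^p(\mu_{E_j})$ as an $\ell^p$-direct sum. For $(2)\Rightarrow(4)$, note $\varphi(e_{j,k})$ vanishes off $L^p(\mu_{E_k})$ and restricts there to a map into $L^p(\mu_{E_j})$ which is invertible with inverse $\varphi(e_{k,j})|_{L^p(\mu_{E_j})}$; both are contractive (norm one), hence it is an invertible isometry between these two $L^p$-spaces, and \autoref{thm:Lamperti} makes it spatial, with reverse $\varphi(e_{k,j})$. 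The implication $(4)\Rightarrow(3)$ is immediate. For $(3)\Rightarrow(2)$, a nonzero spatial partial isometry has norm one, so $\|\varphi(e_{j,k})\|=1$, and an idempotent spatial partial isometry is exactly a spatial idempotent; thus $q_j$ is spatial.

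Finally, $(2)\Rightarrow(5)$ and $(5)\Rightarrow(1)$. Put $F=L^p(\mu_{E_1})$ and define $u\colon F\otimes\ell^p_n\to E$ on elementary tensors by $u(\xi\otimes\delta_k)=\varphi(e_{k,1})\xi$. Since each $\varphi(e_{k,1})\colon L^p(\mu_{E_1})\to L^p(\mu_{E_k})$ is an isometric isomorphism and $E=\bigoplus_k L^p(\mu_{E_k})$ is an $\ell^p$-sum, the map $u$ is an isometric isomorphism, and the computation $\varphi(e_{j,k})u(\xi\otimes\delta_k)=\varphi(e_{j,1})\xi=u(\xi\otimes\delta_j)=u(\xi\otimes e_{j,k}\delta_k)$ shows that $u$ intertwines $\varphi$ with $\id_F\otimes(\,\cdot\,)$, giving (5). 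For $(5)\Rightarrow(1)$, write $F=L^p(\nu)$, so $F\otimes\ell^p_n\cong L^p(\nu;\ell^p_n)$ and, under $u$, the operator $\varphi(a)$ becomes $\id_F\otimes a$ acting pointwise in the $\nu$-variable. A pointwise estimate gives $\|\varphi(a)\|\leq\|a\|_{M_n^p}$, while testing on $\xi\otimes\eta$ for a unit vector $\xi\in F$ gives the reverse inequality; hence $\varphi$ is isometric.
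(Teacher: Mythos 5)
Your proposal is correct and takes essentially the same route as the paper: the crucial step $(1)\Rightarrow(2)$ applies Lamperti's theorem to diagonal invertible isometries and uses connectedness (via the norm formula in part~(4) of \autoref{prop:IsometriesLp}) to force the automorphism parts to be trivial, which is exactly the paper's argument with the elements $v_{j,z}=1-e_{j,j}+ze_{j,j}$, and your $(2)\Rightarrow(5)$ construction of $u$ out of the operators $\varphi(e_{k,1})$ is the paper's verbatim. The only differences are cosmetic --- you use the whole torus of diagonal unitaries at once rather than one circle per index, and you spell out several implications the paper dismisses as clear.
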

\begin{proof}
It is clear that (5) $\Leftrightarrow$ (4) $\Leftrightarrow$ (3) 
$\Leftrightarrow$ (2) and that (5) $\Leftrightarrow$ (1).
We show that (1) implies (2) and that (2) implies (5).

Assume (1). Then $\|\varphi(e_{j,k})\|=1$
for all $j,k=1,\ldots,n$
because $\|e_{j,k}\|=1$ in $M_n^p$. Given $z\in S^1$ and $j=1,\ldots,n$,
set $v_{j,z}=1-e_{j,j}+ze_{j,j}$. One easily checks that $\|v_{j,z}\|\leq 1$,
that $v_{j,z}$ is invertible and that its inverse is $v_{j,\overline{z}}$.
It follows that $v_{j,z}$ belongs to $\mathrm{Isom}(M_n^p)$. 
Since $\varphi$ is unital, $\varphi(v_{j,z})$ is an invertible isometry 
on $E$. Fix a localizable measure algebra $(\mathcal{A},\mu)$ 
such that $E\cong L^p(\mu)$.
By Lamperti's theorem, there exist $h_{j,z}\in \U(L^\I(\mu))$
and $\psi_{t,j}\in \Aut(\mathcal{A})$ such that 
\[\varphi(v_{j,z})=m_{h_{j,z}}u_{\psi_{j,z}}.\]

On the other hand, $v_{j,z}$ is homotopic to $v_{j,1}=1$, so 
all the automorphisms $\psi_{j,z}$ must be the identity. 
Set $f_j=h_{j,-1}$ for $j=1,\ldots,n$. Since $v_{j,-1}$ has order two,
we deduce that the range of $f_j$ is contained in $\{1,-1\}\subseteq S^1$. 
It follows that 
\[1-2\varphi(e_{j,j})=\varphi(v_{j,-1})=m_{f_{j}}\]
and hence $\varphi(e_{j,j})=\frac{1-m_{f_j}}{2}$ is the multiplication 
operator by the characteristic function of the set where $f_j$ equals $-1$.
This, by definition, is a spatial idempotent.

Assume (2). For $j=1,\ldots,n$, choose $E_j\in\mathcal{A}$ such that
$\varphi(e_{j,j})$ is the multiplication operator by the characteristic 
function of $E_j$. Since $\varphi$ is unital, 
$\bigsqcup\limits_{j=1}^n E_j$ must be the total space in $\mathcal{A}$.
It is easy to see that $\varphi(e_{j,k})$ restricts to an isometric 
isomorphism from $L^p(\mu_{E_k})$ to $L^p(\mu_{E_j})$. 
Set $F=L^p(\mu_{E_1})$. Identify $F\otimes \ell^p_n$ with the 
space of $n$-tuples $(\xi_1,\ldots,\xi_n)$ with $\xi_1,\ldots,\xi_n\in F$
with the norm given by 
\[\|(\xi_1,\ldots,\xi_n)\|_p^p=\|\xi_1\|_p^p+\ldots+ \|\xi_n\|_p^p.\]
Define $u\colon F\otimes\ell^p_n\to E$ by setting
\[u(\xi_1,\ldots,\xi_n)=\xi_1+\varphi(e_{2,1})(\xi_2)+\cdots+\varphi(e_{n,1})(\xi_n)\]
for all $(\xi_1,\ldots,\xi_n)\in F\otimes\ell^p_n$. (Observe that
$\xi_1=\varphi(e_{1,1})(\xi_1)$.)
Then $u$ is isometric because the summands $\varphi(e_{j,1})\xi_j$ are supported
on disjoint subsets. One easily checks that $u$ is bijective and that 
$\varphi(e_{j,k})=u(e_{j,k}\otimes 1)u^{-1}$ for $j,k=1,\ldots,n$. 
We omit the details.
\end{proof}

A representation satisfying the equivalent conditions above is 
called \emph{spatial}.
In particular, we obtain another way of defining the spatial norm on
$M_n$: one defines an (algebraic, unital) representation $\rho$ of 
$M_n$ on an $L^p$-space to be \emph{spatial} if $\rho(e_{j,k})$ is
a spatial partial isometry for all $j,k=1,\ldots,n$. The previous proposition
shows that the $L^p$-operator norm $\|\cdot\|$ on $M_n$ defined by 
$\|x\|=\|\rho(x)\|$, for a spatial representation $\rho$, is independent
of $\rho$, and is in fact the same norm from \autoref{eg:Mn}. We will see
that this idea can be used in other contexts too, specifically to define 
$L^p$-analogs of the
Cuntz algebras and, more generally, of graph algebras.

\subsection{Analogs of Cuntz algebras on $L^p$-spaces}
In this subsection, which is based on \cite{Phi_analogs_2012} (generalizing
the case $p=2$ from \cite{Cun_simple_1977}), we discuss the $L^p$-Cuntz algebras.

We begin by defining their algebraic skeleton: the Leavitt algebras.

\begin{df}\label{df:Leavitt}
Let $n\in\N$ with $n\geq 2$. We define the \emph{Leavitt algebra} $L_n$
to be the universal complex algebra generated by elements $s_1,\ldots,s_n,
t_1,\ldots,t_n$, satisfying
\[t_js_k=\delta_{j,k} \ \ \mbox{ and } \ \ \sum_{j=1}^n s_jt_j=1\]
for all $j,k=1,\ldots,n$.
\end{df}

Leavitt algebras are interesting for many reasons. 
They were discovered by Leavitt in his attempts to show that there is no
way of defining a notion of dimension for free modules over general rings
(they case of $\Z$ being well-known to work).
Indeed, he showed that $L_n$, as a free $L_n$-module, satisfies
\[L_n\ncong \oplus_{j=1}^m L_n \mbox{ for } 1<m<n\ \ \mbox{ and } \ \
L_n\cong \oplus_{j=1}^n L_n.
\]
In particular, $L_n$ does not have the so-called 
``Invariant Basis Number Property''.
  
Next, we define a distinguished class of representations of Leavitt algebras
on $L^p$-spaces.

\begin{df}
Let $n\in\N$ with $n\geq 2$ and let $p\in [1,\I)$. An algebraic
unital representation $\rho\colon L_n\to \B(E)$ on an $L^p$-space $E$
is said to be \emph{spatial} if $\rho(s_j)$ is a spatial partial isometry
with reverse $\rho(t_j)$ for all $j=1,\ldots,n$.
\end{df}

It is easy to see that spatial representations exist, and 
the following is 
probably the easiest example. For notational convenience, we describe the 
representation for $L_2$.

\begin{eg}\label{eg:SpatialRep}
Let $p\in [1,\I)$.
Define operators $s_1,s_2,t_1,t_2$ on $\ell^p(\Z)$ by
setting 
\[s_j(e_n)=e_{2n+j} \ \ \mbox{ and } \ \ 
t_j(e_n)=\begin{cases*}
      e_{\frac{n-j}{2}} & if $n-j$ is even \\
      0 & if $n-j$ is odd 
    \end{cases*} 
\]
for $j=1,2$ and $n\in\N$. Then 
$s_1$ and $s_2$ are spatial isometries with reverses
$t_1$ and $t_2$. Moreover, the universal property of $L_2$ implies that
there is a well-defined algebra homomorphism $\rho\colon L_2\to \B(\ell^p(\Z))$ satisfying $\rho(s_j)=s_j$ and $\rho(t_j)=t_j$ for $j=1,2$. 
\end{eg}

We now define the $L^p$-Cuntz algebras $\mathcal{O}_n^p$. 
For $p=2$, these are \ca s which
are called simply Cuntz algebras and denoted $\mathcal{O}_n$. They were 
introduced by Cuntz in the late 70's \cite{Cun_simple_1977} and play a 
fundamental role in the theory of simple \ca s. Their $L^p$-analogs are 
much more recent, and were introduced by Phillips in 2012 
\cite{Phi_analogs_2012}.

\begin{df}\label{df:LpCuntzAlg}
Let $n\in\N$ with $n\geq 2$ and let $p\in [1,\I)$. We define a spatial
$L^p$-operator norm on $L_n$ by setting
\[\|x\|=\sup\{\|\rho(x)\|: \rho\colon L_n\to \B(L^p(\mu)) \ \mbox{spatial representation}\}.\]
We define the \emph{$L^p$-operator Cuntz algebra} $\mathcal{O}_n^p$ to be 
the completion of $L_n$ in the above norm.
\end{df}

Besides the group algebras discussed in \autoref{df:GroupLpOpAlgs}, which
were introduced in the 70's, the $L^p$-operator Cuntz algebras were the first
class of examples of $L^p$-operator algebras that was considered. 
The motivation was the following: Corti\~nas and Phillips showed that for 
a class of \ca s that contains $\On$, topological $K$-theory 
$K_\ast=K_\ast^{\mathrm{top}}$ and algebraic $K$-theory $K_\ast^{\mathrm{alg}}$
agree naturally. They suspected that their methods were applicable to other
Banach algebras, and the search for such examples led Phillips to define the 
algebras $\mathcal{O}_n^p$. It seems to be still unknown whether the 
algebraic and topological $K$-theories of $\mathcal{O}_n^p$ agree:

\begin{qst}
Let $n\in\N$ with $n\geq 2$ and let $p\in [1,\I)$. Is there a natural
isomorphism $K_\ast(\mathcal{O}_n^p)\cong 
K_\ast^{\mathrm{alg}}(\mathcal{O}_n^p)$?
\end{qst}

As is the case for matrix algebras (\autoref{prop:CharactMnSpatial}),
any two spatial representations induce the same norm:

\begin{thm}
Let $n\in\N$ with $n\geq 2$ and let $p\in [1,\I)$. 
Then any two spatial $L^p$-representations of $L_n$ induce the same norm. 
\end{thm}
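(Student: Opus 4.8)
The plan is to show that any spatial representation of $L_n$ produces the same norm by reducing the problem to the uniqueness already established for matrix algebras in \autoref{prop:CharactMnSpatial}. The key observation is that the Leavitt algebra $L_n$ contains a natural increasing sequence of matrix-algebra-like subalgebras: for each $k\geq 1$, the elements $s_\mu t_\nu$, where $\mu=(\mu_1,\ldots,\mu_k)$ and $\nu=(\nu_1,\ldots,\nu_k)$ range over multi-indices in $\{1,\ldots,n\}^k$ and $s_\mu=s_{\mu_1}\cdots s_{\mu_k}$, $t_\nu=t_{\nu_k}\cdots t_{\nu_1}$, behave exactly like the matrix units $e_{\mu,\nu}$ of $M_{n^k}$. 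Indeed, the Leavitt relations $t_js_k=\delta_{j,k}$ and $\sum_j s_jt_j=1$ force $(s_\mu t_\nu)(s_{\mu'}t_{\nu'})=\delta_{\nu,\mu'}\,s_\mu t_{\nu'}$ together with $\sum_{|\mu|=k} s_\mu t_\mu=1$, which are precisely the matrix-unit relations for $M_{n^k}$. Thus there is a unital embedding $M_{n^k}\hookrightarrow L_n$ of algebras.

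\textbf{Step one} is to verify that if $\rho\colon L_n\to\B(E)$ is a spatial representation, then its restriction to each matrix subalgebra $M_{n^k}\subseteq L_n$ is a spatial representation of $M_{n^k}$ in the sense of \autoref{prop:CharactMnSpatial}. This requires checking that $\rho(s_\mu t_\nu)$ is a spatial partial isometry with reverse $\rho(s_\nu t_\mu)$. Since $\rho(s_j)$ is spatial with reverse $\rho(t_j)$, and products of spatial partial isometries (along compatible supports) are again spatial, this should follow from the composition behavior of spatial partial isometries described after \autoref{df:SpatialPartialIso}; the reverse of a product reverses the order of the factors, giving exactly $\rho(s_\nu t_\mu)$. \textbf{Step two} then invokes \autoref{prop:CharactMnSpatial}: since $\rho|_{M_{n^k}}$ is spatial, it is isometric, so for any $x$ lying in the image of $M_{n^k}$ the value $\|\rho(x)\|$ equals the intrinsic spatial matrix norm of $x$, independent of $\rho$.

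\textbf{Step three} is to promote this from the dense subalgebra to all of $L_n$. The union $\bigcup_k M_{n^k}$, taken with respect to the natural nested inclusions $M_{n^k}\hookrightarrow M_{n^{k+1}}$ sending $e_{\mu,\nu}\mapsto \sum_{i=1}^n e_{\mu i,\nu i}$ (realized inside $L_n$ by multiplying $s_\mu t_\nu=\sum_i s_{\mu i}t_{\nu i}$), is precisely the linear span of all words $s_\mu t_\nu$, and this span is dense in $L_n$ by the relations. Therefore the norm of any element of $L_n$ is determined by its values on these finite-dimensional subalgebras, on which we have already shown $\|\rho(x)\|$ is representation-independent. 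By continuity and density, any two spatial representations agree in norm on all of $L_n$.

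\textbf{The main obstacle I expect} is \textbf{Step one}, the spatiality of the products $\rho(s_\mu t_\nu)$. One must confirm not merely that these are partial isometries (which is automatic) but that they are genuinely \emph{spatial} partial isometries, with the correct reverse, so that \autoref{prop:CharactMnSpatial} applies. This hinges on the fact that a composition of spatial partial isometries whose domain and range supports are compatibly nested is again spatial — intuitively clear from the underlying measure-algebra picture, since each $\rho(s_j)$ moves one disjoint piece of the space isometrically onto another, but it requires tracking the quadruples $(E,F,\varphi,f)$ through the composition and checking that the resulting change-of-variables factor and Boolean isomorphism compose correctly. Once this bookkeeping is in place, the rest is a standard density argument.
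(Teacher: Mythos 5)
There is a genuine gap, and it sits exactly where the real difficulty of this theorem lies. Your Step three asserts that $\bigcup_k M_{n^k}$ ``is precisely the linear span of all words $s_\mu t_\nu$'' and hence dense in $L_n$. This is false: the subalgebra of $L_n$ spanned by the matrix units $s_\mu t_\nu$ with $|\mu|=|\nu|=k$ (and their union over $k$) consists only of the \emph{balanced} words. The Leavitt algebra carries a natural $\Z$-grading with $\deg s_j=1$ and $\deg t_j=-1$, and $\bigcup_k M_{n^k}$ is exactly the degree-zero component (the UHF core); the generator $s_1$ itself, having degree $1$, lies in no $M_{n^k}$. Moreover this core is \emph{closed} in $\mathcal{O}_n^p$ (it is the range of the contractive gauge expectation $\Phi(a)=\int_{S^1}\rho_z(a)\,dz$, where $\rho_z(s_j)=zs_j$), so no density argument can rescue the step. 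Your Steps one and two are fine and do show that any spatial representation is isometric on the core, but the passage from the core to all of $L_n$ is precisely the hard part of the theorem: in Phillips' proof (Theorem 8.7 of \cite{Phi_analogs_2012}, which is where the paper outsources this result --- no proof is given in the text) one uses the gauge action to extract the homogeneous components $a_m$ of an element $a$ with the contractive estimate $\|a_m\|\leq\|a\|$, and then a separate, nontrivial argument (multiplying the degree-$m$ parts by suitable isometries to push them into the core, and controlling the resulting norms) to recover $\|a\|$ from data that your Steps one and two determine. Without that second half the norm on $L_n$ is not pinned down.

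A minor additional point: \autoref{prop:CharactMnSpatial} is stated only for $p\in[1,\I)\setminus\{2\}$, whereas the theorem you are proving includes $p=2$; for $p=2$ the matrix-algebra step needs the (classical, easy) $C^*$-argument instead. This is cosmetic compared with the density failure above, but you should flag it if you keep the reduction to matrix algebras.
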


In particular, it follows from the previous theorem 
and \autoref{eg:SpatialRep} that $\mathcal{O}_2^p$ (and in fact 
any $\mathcal{O}_n^p$) can be isometrically represented on $\ell^p(\Z)$.
  

The $L^p$-Cuntz algebras are remarkable algebras that satisfy a number of
very relevant properties:

\begin{thm}
Let $n\in\N$ with $n\geq 2$ and let $p\in [1,\I)$. Then
\be\item $\mathcal{O}_n^p$ is simple.
\item $\mathcal{O}_n^p$ is purely infinite: for all $x\in \mathcal{O}_n^p$ with 
$x\neq 0$, there exist $a,b\in \mathcal{O}_n^p$ with $axb=1$.
\item $K_0(\mathcal{O}_n^p)\cong \Z_{n-1}$ and $K_1(\mathcal{O}_n^p)\cong \{0\}$.
\ee
\end{thm}

None of these results are particularly easy to prove, and 
the first proofs for $p=2$ (which appeared much earlier) are very different 
from the case $p\neq 2$. 
Indeed, the method originally used by
Cuntz to compute $K_\ast(\mathcal{O}_n)$ 
breaks down for $p\neq 2$, since it used the fact that the group of 
unitary matrices in $M_n$ is connected 
(while the group of invertible isometries
in $M_n^p$ is not, by Lamperti's theorem). 

The argument which does carry
over to the case $p\in [1,\I)$ consists in expressing $\mathcal{O}_n^p$ as the 
crossed product of the spatial $L^p$-operator UHF-algebra of type $n^\I$
(this is essentially the infinite tensor product of copies of $M_n^p$)
by the shift automorphism. Once this is accomplished, there exists a 
6-term exact sequence (the ``Pimsner-Voiculescu exact sequence'') 
in $K$-theory relating the $K$-groups of an 
$L^p$-operator algebra $A$ and the $K$-groups of its crossed product by $\Z$.
We omit the details.

Another result about Cuntz \ca s which proved to be absolutely fundamental in
the theory of simple \ca s is the following theorem of Elliott; see
\cite{Ror_short_1994} for a published proof. 

\begin{thm}
There is an (isometric) isomorphism $\Ot\otimes \Ot\cong \Ot$. 
\end{thm}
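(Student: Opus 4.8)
The plan is to obtain the isomorphism from Elliott's approximate intertwining argument, so the substance splits into two parts: producing unital embeddings in both directions, and a uniqueness theorem controlling their composites. One embedding is the first-factor map $\iota\colon \Ot\to\Ot\otimes\Ot$, $a\mapsto a\otimes 1$. For the reverse direction I would produce a unital embedding $\lambda\colon \Ot\otimes\Ot\to\Ot$ by exhibiting two commuting unital copies of the Cuntz relations inside $\Ot$, that is, commuting Cuntz systems $(v_1,v_2)$ and $(w_1,w_2)$ with $v_1v_1^*+v_2v_2^*=w_1w_1^*+w_2w_2^*=1$; by the universal property these determine a unital $*$-homomorphism out of $\Ot\otimes\Ot$, and simplicity of $\Ot\otimes\Ot$ forces it to be injective, hence isometric.

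The heart of the matter is the following uniqueness statement: any two unital $*$-homomorphisms from $\Ot$ into a unital, simple, purely infinite $C^*$-algebra $B$ are approximately unitarily equivalent, and likewise for the domain $\Ot\otimes\Ot$, which is again unital, simple, purely infinite (and has trivial $K$-theory by the K\"unneth formula, since $K_0(\Ot)=K_1(\Ot)=0$). I would prove this by encoding a unital homomorphism through the Cuntz system it sends the canonical generators to, and then showing that any two such systems in $B$ are approximately unitarily equivalent. Pure infiniteness gives that any two nonzero projections with the same $K_0$-class are Murray--von Neumann equivalent; the vanishing of $K_0$ and $K_1$ (established earlier for $\Ot$) then removes every obstruction, so the range projections $\phi(s_i)\phi(s_i)^*$ and $\psi(s_i)\psi(s_i)^*$ can be matched by partial isometries, and these can be assembled into a single unitary $u$ with $u\phi(s_i)u^*$ close to $\psi(s_i)$ on a prescribed finite set and tolerance.

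Granting uniqueness, the composite $\lambda\circ\iota$ is a unital endomorphism of $\Ot$ and $\iota\circ\lambda$ is a unital endomorphism of $\Ot\otimes\Ot$, so both are approximately unitarily equivalent to the respective identity maps. Feeding this two-sided approximate-unitary data into Elliott's intertwining machine yields sequences of unitaries correcting $\iota$ and $\lambda$ so that the corrected maps converge to mutually inverse $*$-isomorphisms, producing an isometric isomorphism $\Ot\otimes\Ot\cong\Ot$. The main obstacle is the uniqueness theorem: making the approximate unitary equivalence of Cuntz systems quantitative and uniform over finite subsets, and handling the case of commuting Cuntz systems needed for the domain $\Ot\otimes\Ot$, is exactly where pure infiniteness and the vanishing of $K_0$ and $K_1$ must be deployed with care; once it is in hand, the construction of $\lambda$ and the intertwining are comparatively routine.
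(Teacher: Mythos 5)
The paper does not actually prove this theorem: it is stated as a classical result of Elliott and the reader is referred to R{\o}rdam's published proof \cite{Ror_short_1994}. Your overall strategy --- two unital embeddings $\iota\colon a\mapsto a\otimes 1$ and $\lambda\colon \Ot\otimes\Ot\to\Ot$, a uniqueness theorem giving approximate unitary equivalence of unital homomorphisms, and Elliott's approximate intertwining --- is precisely the strategy of that reference, so the architecture is right. The problem is that your sketch of the uniqueness step, which you correctly identify as the heart of the matter, does not engage with the actual difficulty. Given two unital homomorphisms $\phi,\psi\colon\Ot\to B$ with Cuntz systems $e_i=\phi(s_i)$ and $f_i=\psi(s_i)$, the element $v=f_1e_1^*+f_2e_2^*$ is \emph{automatically} a unitary satisfying $ve_i=f_i$; no pure infiniteness, no comparison of projections, and no $K$-theory is needed to match the range projections, and a unitary doing so exists exactly. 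The point is that this buys essentially nothing: $\mathrm{Ad}(v)$ carries $e_i$ to $f_iv^*$, not to $f_i$, so after conjugation one is left with a residual unitary twist, i.e.\ one must show that the ``quasi-free'' endomorphism $s_i\mapsto us_i$ of $\Ot$ is approximately inner for every unitary $u$. That is the genuine content of R{\o}rdam's lemma; it uses the connectedness of the unitary group of $\Ot$ (here trivial $K_1$ enters) together with a non-obvious absorption argument exploiting proper infiniteness, and it is not a consequence of ``projections with the same $K_0$-class are equivalent.'' As written, your plan of matching range projections and assembling partial isometries into a unitary proves something that is both easier than and insufficient for the claim.

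Two smaller gaps: the two commuting unital Cuntz systems inside $\Ot$ needed to define $\lambda$ are asserted rather than exhibited, and their existence is itself a lemma requiring a construction (one also needs nuclearity of $\Ot$ to know that a pair of commuting Cuntz systems induces a map out of the minimal tensor product, and simplicity of $\Ot\otimes\Ot$ to get injectivity --- you invoke the latter but it too needs justification for a tensor product). Likewise, the uniqueness statement must be proved not only for homomorphisms out of $\Ot$ but also out of $\Ot\otimes\Ot$, i.e.\ for commuting pairs of Cuntz systems, which you flag but do not address; this is a second, harder instance of the same missing argument. None of this makes the approach wrong --- it is the standard one --- but the proposal as it stands omits the one step that makes the theorem true.
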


This result motivated the search for analogs of this theorem in other contexts.
For Leavitt algebras, Ara and Corti\~nas showed that there is no such isomorphism;
see \cite{AraCor_tensor_2013}:

\begin{thm}
There is no isomorphism $L_2\otimes L_2\cong L_2$. 
\end{thm}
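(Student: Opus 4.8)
=== PROOF PROPOSAL ===

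The plan is to prove that there is no isomorphism $L_2 \otimes L_2 \cong L_2$ by using $K$-theoretic invariants that survive passage to the algebraic tensor product. The key invariant is $K_0$, which for the Leavitt algebra $L_n$ is known to be $\Z/(n-1)\Z$; in particular $K_0(L_2) \cong \Z/1\cdot\Z = 0$. Wait—if $K_0(L_2)=0$ then $K_0$ alone cannot distinguish $L_2$ from $L_2\otimes L_2$, so $K_0$ is the wrong invariant here and I must look elsewhere. This is already a signal that the obstruction is subtler than in the $C^*$-setting, where Kirchberg--Phillips classification does the work.

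The approach I would actually take is to exploit $K_1$ together with the multiplicative structure on $K$-theory induced by the tensor product. The crucial point is that for a unital ring $R$, the algebraic $K$-theory groups $K_0(R)$ and $K_1(R)$ carry a ring/module structure under $\otimes$, and there is a K\"unneth-type relationship. First I would record the full $K$-theory of $L_2$: one has $K_0(L_2)=0$ but the relevant finer invariant is the class $[1]$ of the unit in $K_0$, or more precisely the behaviour of the canonical map from $\Z$ (generated by $[1]$) into $K_0$. For $L_n$ the unit satisfies $(n-1)[1]=0$, so for $L_2$ the unit class is actually zero, while the \emph{structure} of how $1$ decomposes (via $1 = \sum_{j} s_j t_j$ with $n=2$ summands, each $s_jt_j \sim [1]$) encodes the relation $[1] = 2[1]$. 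I would then compute the corresponding relation in $L_2\otimes L_2$: the unit there is $1\otimes 1$, and decomposing it using the generators of both tensor factors yields $[1\otimes 1] = 4[1\otimes 1]$, i.e.\ $3[1\otimes 1]=0$.

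The heart of the argument is therefore that the order of the unit in $K_0$—equivalently the least $m\geq 1$ with $m[1]=0$—is a \emph{ring isomorphism invariant}, and it equals $n-1=1$ for $L_2$ but equals $3$ for $L_2\otimes L_2$. Since $1 \neq 3$ as the defining integer, no isomorphism can exist. Concretely I would argue that an isomorphism $\Phi\colon L_2\otimes L_2 \to L_2$ would be unital (or could be arranged to carry $1$ to $1$), hence induce an isomorphism on $K_0$ sending $[1\otimes 1]$ to $[1]$; this forces the annihilator of the unit class to be preserved, contradicting $3\mathbb{Z} \neq 1\mathbb{Z}=\mathbb{Z}$. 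The main obstacle I anticipate is making the multiplicativity of the unit-order invariant under $\otimes$ completely rigorous at the algebraic level: one must verify that the K\"unneth/product formula gives $[1\otimes 1]=[1]\cdot[1]$ and that the Leavitt relation $\sum_{j=1}^n s_jt_j=1$ translates into $n[1]=[1]$ in $K_0$, so that the orders multiply correctly. This is precisely the computation carried out by Ara and Corti\~nas in \cite{AraCor_tensor_2013}, and I would cite their $K$-theoretic machinery for the precise form of the K\"unneth theorem for Leavitt path algebras rather than reprove it.
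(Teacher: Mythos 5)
Your proposed invariant does not distinguish the two algebras, so the central step fails. The relation $3[1\otimes 1]=0$ that you derive from $1\otimes 1=\sum_{j,k}(s_j\otimes s_k)(t_j\otimes t_k)$ is correct, but it only shows that the order of the unit class \emph{divides} $3$; it does not compute that order. The unital inclusion $L_2\otimes 1\subseteq L_2\otimes L_2$ already carries the Leavitt relation to $\sum_{j=1}^2 (s_j\otimes 1)(t_j\otimes 1)=1\otimes 1$, where the two summands are orthogonal idempotents each Murray--von Neumann equivalent to $1\otimes 1$ (because $(t_j\otimes 1)(s_j\otimes 1)=1\otimes 1$). Hence $[1\otimes 1]=2[1\otimes 1]$, so $[1\otimes 1]=0$ in $K_0(L_2\otimes L_2)$, exactly as $[1]=0$ in $K_0(L_2)$. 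The order of the unit is therefore $1$ in both algebras, and there is no conflict with $3[1\otimes 1]=0$ since the zero class is annihilated by every integer. Your anticipated ``contradiction $3\mathbb{Z}\neq\mathbb{Z}$'' never materializes: the annihilator of the unit class is all of $\mathbb{Z}$ on both sides.

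The paper gives no proof of this theorem; it cites Ara and Corti\~nas, whose argument runs through a different invariant altogether. They compute the Hochschild homology of Leavitt path algebras, observe that $HH_*(L_2)$ is concentrated in low degrees, and then use the K\"unneth formula for Hochschild homology to exhibit a nonzero class in $HH_*(L_2\otimes L_2)$ in a degree where $HH_*(L_2)$ vanishes; since Hochschild homology is a Morita invariant, this rules out any isomorphism (indeed any Morita equivalence). This choice of invariant is forced precisely because the $K$-theoretic invariants you are reaching for are all trivial for $L_2$. To repair your write-up you would need to replace the ``order of $[1]$ in $K_0$'' by an invariant that actually differs between the two algebras, which is exactly what the Hochschild computation supplies.
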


It remained open whether there is an isomorphism for the $L^p$-versions of 
$\Ot$. In joint work with Choi and Thiel (see~\cite{ChoGarThi_rigidity_2019}), 
we have shown that this is also not the case;
a proof will be outlined in the last section.
This shows that the $C^*$-case is really quite special and that there are many
more isomorphisms between \ca s than between $L^p$-operator algebras.

\subsection{$L^p$-operator algebras of finite directed graphs}
This subsection is based on \cite{CorRod_operator_2017}.
Let $Q$ be a finite directed graph, which we write as
$Q=Q^{(0)}\cup Q^{(1)}$, where the elements of $Q^{(0)}$ are the 
\emph{vertices} and the elements of $Q^{(1)}$ are the \emph{edges}. 
We denote by $d, r \colon Q^{(1)}\to Q^{(0)}$ the \emph{domain}
and \emph{range} maps. 

We begin by defining the Leavitt path algebra associated to a graph.
To avoid technicalities, we assume that for every $v\in Q^{(0)}$
there exists an edge $a\in Q^{(1)}$ such that $r(a)=v$. (In the 
standard terminology, this means that every vertex in $Q$ is regular.)

\begin{df}
Let $Q$ be a finite oriented graph. We define its associated \emph{Leavitt path 
algebra} $L_Q$ to be the universal unital complex algebra
generated by 
elements $e_v, s_a, t_b$, for $v\in Q^{(0)}$ and $a,b\in Q^{(1)}$, subject
to the following relations:
\be\item $e_ve_w=\delta_{v,w}e_v$ for $v,w\in Q^{(0)}$;
\item $e_{r(a)}s_a=s_ae_{d(a)}=s_a$ for all $a\in Q^{(1)}$;
\item $t_ae_{r(a)}=e_{d(a)}t_a=t_a$ for all $a\in Q^{(1)}$;
\item $t_as_b=e_{d(b)}\delta_{s,b}$ for all $a,b\in Q^{(1)}$;
\item $e_v=\sum\limits_{\{a\in Q^{(1)}\colon r(a)=v\}} s_at_a$ for all $v\in Q^{(0)}$.
\ee
\end{df}

$L^p$-operator graph algebras are defined similarly to how $L^p$-operator
Cuntz algebras were defined in \autoref{df:LpCuntzAlg}: one considers the 
completion of the Leavitt path algebra with respect to spatial representations.

\begin{df}
Let $Q$ be a finite oriented graph and let $p\in [1,\I)$. 
Given an $L^p$-space $E$ and a 
representation $\varphi\colon L_Q\to \B(E)$, we say that $\varphi$ is 
\emph{spatial} if 
\bi
\item $\varphi(e_v)$ is a spatial idempotent for all $v\in Q^{(1)}$; 
\item $\varphi(s_a)$ and $\varphi(t_a)$ are spatial partial isometries 
for all $a\in Q^{(1)}$.\ei
We define the associated \emph{$L^p$-operator graph algebra} 
$\mathcal{O}^p(Q)$ to be the completion of $L_Q$ in the norm
\[\|x\|=\sup\{\|\varphi(x)\|\colon \varphi \mbox{ spatial representation on an } 
L^p\mbox{-space}\}.\]
\end{df}

Unlike in the case for Cuntz algebras, it is not in general true that \emph{any}
two spatial representations of $L_Q$ induce the same norm. In other words, the 
supremum in the previous definition is actually necessary. 
This can already be seen in the following case.

\begin{eg}
Let $C$ denote the graph with one vertex and one loop around it. 
Then $\mathcal{O}^p(C)\cong F^p(\Z)$ for all $p\in [1,\I)$.
\end{eg}

\begin{eg}
Let $n\in\N$ with $n\geq 2$. Denote by $C_n$ the graph with 
one vertex and $n$ loops around it. Then 
$\mathcal{O}^p(C_n)\cong \mathcal{O}_n^p$ for all $p\in [1,\I)$.
\end{eg}

\begin{eg}
For $n\in\N$, let $Q_n$ be the following graph 
\begin{eqnarray*} \xymatrix{
\bullet_1 \ar[r] & \bullet_2 \ar[r] &\cdots \ar[r] & \bullet_n}\end{eqnarray*}
For $p\in [1,\I)$, there is an isometric isomorphism $\mathcal{O}^p(Q_n)\cong M_n^p$.
\end{eg}

There is so far not so much known about $L^p$-operator graph algebras, 
although graph $C^*$-algebras are a very well-studied class with a number of
very nice properties. A thorough and systematic study of $L^p$-operator graph
algebras is at this point within reach and certainly very interesting.
  
\section{Crossed products and their isomorphisms}
Crossed products are a generalization of group algebras, where one considers
an action of a group on a topological space or, more generally, an $L^p$-operator
algebra, and constructs an ``enveloping algebra'' that encodes dynamical information
of the original action.

\subsection{Construction of crossed products}
Let $G$ be a locally compact group, endowed with its Haar measure $\mu$, 
let $A$ be an $L^p$-operator algebra, and let $\alpha\colon G\to\Aut(A)$ be
an action (by isometric isomorphisms). For example, one could take $A=C_0(X)$
for $X$ locally compact and Hausdorff, take an action $G\curvearrowright X$ by
homeomorphisms, and for $s\in G$ let $\alpha_s\colon C_0(X)\to C_0(X)$ be given by
$\alpha_s(f)(x)=f(s^{-1}\cdot x)$ for $f\in C_0(X)$ and $x\in X$.

Our next goal is to define the (reduced) crossed product $F_\lambda^p(G,A,\alpha)$, also
denoted $A\rtimes_{\alpha,\lambda}G$ whenever $p$ is clear from the context. 
Intuitively speaking, and by analogy with the semidirect product of groups,
the crossed product $F^p_\lambda(G,A,\alpha)$ 
is the ``smallest'' algebra that contains $A$ and 
$G$, and where the action of $G$ on $A$ is implemented by conjugation by 
invertible isometries.

The reduced crossed product will be constructed as a certain completion of the Banach algebra 
$L^1(G,A,\alpha)$, which as a Banach space agrees with $L^1(G,A)$ and 
whose product is given by \emph{twisted convolution}:
\[(f\ast g)(s)=\int_G f(t)\alpha_t(g(s^{-1}t))\ d\mu(t)\]
for all $f,g\in L^1(G,A,\alpha)$. 

\begin{df}
A \emph{covariant representation} of $(G,A,\alpha)$ on an $L^p$-space $E$
is a pair $(\varphi,u)$ where $\varphi\colon A\to \B(E)$ is a representation
and $u\colon G\to \mathrm{Isom}(E)$ is a group homomorphism, which satisfy
\[u_s\varphi(a)u_s^{-1}=\varphi(\alpha_s(a))\]
for all $s\in G$ and all $a\in A$.

Given a covariant pair $(\varphi, u)$, we define the associated \emph{integrated
representation} $\varphi\rtimes u\colon L^1(G,A,\alpha)\to \B(E)$ by
\[(\varphi\rtimes u)(f)(\xi)=\int_G \varphi(f(s))(u_s(\xi)) \ d\mu(s)\]
for all $f\in L^1(G,A,\alpha)$ and all $\xi\in E$.
\end{df}

We will consider a distinguished class of covariant pairs, called the 
\emph{regular} covariant pairs.

\begin{df}
Let $\varphi_0\colon A\to\B(E_0)$ be any representation. 
Set $E=L^p(G,E_0)$, and define the associated \emph{regular covariant pair} 
$(\varphi,u)$ by
\[\varphi(a)(\xi)(s)=\varphi_0(\alpha_{s^{-1}}(a))(\xi(s)) \ \ \mbox{ and } \ \
 u_s(\xi)(t)=\xi(s^{-1}t)
\]
for all $a\in A$, all $s,t\in G$ and all $\xi\in E$. 
\end{df}

\begin{df}
The \emph{reduced crossed product} $F^p_\lambda(G,A,\alpha)$ 
is the completion of $L^1(G,A,\alpha)$ in the norm
\[\|f\|=\sup\{\|(\varphi\rtimes u)(f)\|\colon (\varphi,u) \mbox{ is a regular covariant pair}\}.\]
\end{df}

\begin{eg}
Let $G$ be a locally compact group and let $p\in [1,\I)$.
Then there is a canonical identification 
$F_\lambda^p(G,\C)\cong F^p_\lambda(G)$. This follows easily from the
fact that the integrated form of any regular covariant pair for 
$(G,\C, \mathrm{trivial})$ is an amplification of the left regular
representation of $G$, and hence induces the same norm. 
\end{eg}

When $A$ has the form $C(X)$, so that the action $\alpha$ comes from an action
of $G$ on $X$ via homeomorphisms, we usually write $F^p_\lambda(G,X)$
instead of $F^p_\lambda(G,C(X))$.

There is also a \emph{full crossed product} $F^p(G,A,\alpha)$ which is defined
using all covariant pairs and not just those that are regular. The 
full crossed product, being universal for all covariant pairs, admits a very
nice description in terms of generator and relations. Moreover, when $G$
is amenable, then $F^p(G,A,\alpha)$ and $F^p_\lambda(G,A,\alpha)$ agree canonically;
the case $A=\C$ is \autoref{thm:GpAmen}. As a consequence, whenever $G$ is amenable,
the reduced crossed product $F^p_\lambda(G,A,\alpha)$ can be described in a very
concrete way. A particularly nice case if that of integer actions:

\begin{thm}
Take $G=\Z$ and $X$ compact and Hausdorff. Then an action of $\Z$ on $C(X)$
is generated by one homeomorphism $h\colon X\to X$, and 
$F^p_\lambda(\Z,X)$ is the universal
$L^p$-operator algebra generated by a copy of $C(X)$, an invertible isometry $u$
and its inverse, subject to the relation 
\[ufu^{-1}=f\circ h^{-1}.\]
\end{thm}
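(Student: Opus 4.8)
The plan is to identify the universal algebra $U$ described in the statement with the reduced crossed product $F^p_\lambda(\Z, X)$ by exhibiting mutually inverse contractive homomorphisms. Since $\Z$ is amenable, the theorem of \autoref{thm:GpAmen} extends (in the crossed-product setting, as indicated in the preceding paragraph) to give that the full and reduced crossed products agree canonically. Therefore it suffices to prove the cleaner statement that $F^p(\Z,X)$, the \emph{full} crossed product, is isometrically isomorphic to the universal $L^p$-operator algebra $U$ generated by a copy of $C(X)$ together with an invertible isometry $u$ and its inverse $u^{-1}$, subject to the covariance relation $ufu^{-1}=f\circ h^{-1}$ for all $f\in C(X)$.

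\textbf{Step 1: Covariant pairs are exactly the data in the universal relation.} I would first observe that the relation $ufu^{-1}=f\circ h^{-1}$ is precisely the covariance condition for the $\Z$-action generated by $h$. Indeed, the action $\alpha\colon \Z\to\Aut(C(X))$ is determined by $\alpha_1(f)=f\circ h^{-1}$, and a covariant pair $(\varphi,U)$ on an $L^p$-space $E$ consists of a representation $\varphi\colon C(X)\to\B(E)$ and a group homomorphism $U\colon\Z\to\mathrm{Isom}(E)$ with $U_n\varphi(f)U_n^{-1}=\varphi(\alpha_n(f))$. Setting $u=U_1$, the homomorphism $U$ is entirely determined by the single invertible isometry $u$, and the covariance relation reduces to $u\varphi(f)u^{-1}=\varphi(f\circ h^{-1})$. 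Thus a covariant pair for $(\Z,C(X),\alpha)$ on $E$ is exactly a representation $C(X)\to\B(E)$ together with an invertible isometry $u$ satisfying the stated relation — which is precisely a representation of the generators-and-relations algebra $U$.

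\textbf{Step 2: Matching the universal norms.} The full crossed product $F^p(\Z,X)$ is by definition the completion of $L^1(\Z,C(X),\alpha)$ in the norm given by the supremum over integrated forms $\varphi\rtimes U$ of \emph{all} covariant pairs. By Step~1, these integrated forms are exactly the representations of the dense subalgebra of $U$ spanned by the elements $f u^n$ (for $f\in C(X)$, $n\in\Z$), and the universal $L^p$-operator norm on $U$ is the supremum of $\|\rho(\cdot)\|$ over all such representations $\rho$ on $L^p$-spaces. Since $L^1(\Z,C(X),\alpha)$ is naturally identified with this dense span (the twisted convolution product matching the multiplication $fu^m\cdot gu^n=f\,\alpha_m(g)\,u^{m+n}$), the two universal norms coincide on a common dense subalgebra, and hence their completions are canonically isometrically isomorphic.

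\textbf{The main obstacle}, which I would treat with some care, is verifying that the algebraic generators-and-relations object $U$ genuinely admits \emph{enough} $L^p$-operator representations for its universal norm to be well-defined (finite) and faithful on the algebraic level, and that the identification in Step~2 respects the multiplications on the nose. The finiteness of the norm follows from the existence of at least one faithful covariant pair — for instance any regular covariant pair built from a faithful representation $\varphi_0$ of $C(X)$, whose integrated form realizes the dense subalgebra isometrically onto a subalgebra of $\B(L^p(\Z,E_0))$. The algebraic faithfulness (that distinct formal elements $\sum f_n u^n$ are separated by some spatial covariant representation) is what ultimately guarantees that $U$ is a genuine $L^p$-operator algebra rather than a degenerate completion; this is where one invokes the concrete regular covariant pairs to produce separating representations. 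Once these routine but essential checks are in place, the isometric isomorphism $F^p_\lambda(\Z,X)\cong U$ follows by combining the amenability identification with Steps~1 and~2.
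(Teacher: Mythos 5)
Your proposal is correct and follows exactly the route the paper intends: the paper states this theorem without proof, but the paragraph immediately preceding it sketches precisely your argument (the full crossed product is universal for covariant pairs, a covariant pair for a $\Z$-action is determined by a single invertible isometry satisfying the covariance relation, and amenability of $\Z$ identifies the full and reduced crossed products via \autoref{thm:GpAmen} and its crossed-product analogue). Your additional care about finiteness and faithfulness of the universal norm via regular covariant pairs is a sensible supplement to what the paper leaves implicit.
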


\subsection{Isomorphisms of crossed products}
The study of crossed products, particularly of those of the form
$F^p_\lambda(G,X)$, is a very active area of research within 
operator algebras. In this setting,
one tries to understand what properties of the dynamics $G\curvearrowright X$
are reflected in the algebraic structure of the crossed product. 
In this section, which is based on \cite{ChoGarThi_rigidity_2019}, try to answer this question. 

\begin{eg}
Let $G$ be a finite group, acting on the compact Hausdorff space $X=G$
via left translation. Then $F^p_\lambda(G,G)\cong \B(\ell^p(G))$. 
In particular, the crossed product of $G\curvearrowright G$ \emph{only} 
remembers the cardinality of $G$. 
\end{eg}

Although it does not remember the group $G$, we will see that 
$F^p_\lambda(G,X)$ remembers $C(X)$ (and hence $X$), and more 
generally that it remembers the ``continuous orbit equivalence'' of the
action, at least when the action is essentially free\footnote{Recall that an action 
$G\curvearrowright X$ is
said to be \emph{essentially free} if for all $g\in G\setminus\{1\}$, the set
$\{x\in X\colon g\cdot x=x\}$ has empty interior.}.

\begin{df}\label{df:coe}
Let $G$ and $H$ be countable discrete groups, let $X$ and $Y$ be compact
Hausdorff spaces, and let $G\curvearrowright^\sigma X$ and $H\curvearrowright^\rho Y$ 
be actions. We say that $\sigma$ and $\rho$ are \emph{continuously orbit equivalent}, 
written $G\curvearrowright^\sigma X \sim_{\mathrm{coe}} H\curvearrowright^\rho Y$,
if there exist a homeomorphism $\theta\colon X\to Y$ and continuous maps
$c_H\colon G\times X\to H$ and $c_G\colon H\times Y\to G$ satisfying
\[\theta(\sigma_g(x))=\rho_{c_H(g,x)}(\theta(x)) \ \ \mbox{ and } \ \ 
 \theta^{-1}(\rho_h(y))=\sigma_{c_G(h,y)}(\theta^{-1}(y))
\]
for all $x\in X$, $y\in Y$, $g\in G$ and $h\in H$.
\end{df}

When two essentially free actions as above are continuously orbit equivalent, 
the maps $c_G$ and $c_H$ from the definition are
uniquely determined and satisfy certain cocycle conditions. These cocycle
conditions allow one to show that if two essentially free actions are continuously
orbit equivalent, then their reduced crossed products are naturally 
isometrically isomorphic, for all $p\in [1,\I)$. The following theorem asserts
that the converse is true for $p\neq 2$.

\begin{thm}\label{thm:RigidityDynSysts}
Let $p\in [1,\infty)\setminus\{2\}$, let $G$ and $H$ be countable discrete groups, 
let $X$ and $Y$ be compact
Hausdorff spaces, and let $G\curvearrowright X$ and $H\curvearrowright Y$ 
be topologically free actions. Then the following are equivalent:
\be
\item There is an isometric
isomorphism $F^p_\lambda(G,X)\cong F^p_\lambda(H,Y)$;
\item $G\curvearrowright X$ and $H\curvearrowright Y$ are continuously orbit equivalent.
\ee
\end{thm}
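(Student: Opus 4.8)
The implication (2) $\Rightarrow$ (1) is the one anticipated in the discussion preceding the statement: from a continuous orbit equivalence $(\theta, c_G, c_H)$ one builds an isometric isomorphism by sending a typical generator $a\, u_g^G$ (with $a \in C(X)$) to $(a \circ \theta^{-1})\, U^H_{c_H(g,\cdot)}$, where $U^H_{c_H(g,\cdot)}$ is the normalizer of $C(Y)$ determined by the continuous map $c_H(g,\cdot)\colon X\to H$; the cocycle identities guarantee multiplicativity, and comparing regular representations guarantees isometry. So the real content is (1) $\Rightarrow$ (2). The plan is to recover the dynamics from an isometric isomorphism $\Phi \colon F^p_\lambda(G,X) \to F^p_\lambda(H,Y)$ in three stages: first, show $\Phi$ carries the canonical diagonal $C(X)$ onto $C(Y)$, producing a homeomorphism $\theta$; second, use Lamperti's theorem to pin down the spatial form of the images of the group isometries $u_g^G$; third, read off the continuous cocycles $c_H$ and $c_G$, invoking topological freeness to make them well defined.

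First I would isolate $C(X)$ intrinsically inside $F^p_\lambda(G,X)$. The crucial point is that, because $p \neq 2$, spatiality of idempotents and partial isometries is intrinsic — independent of the chosen $L^p$-representation — as established for partial isometries shortly after \autoref{df:SpatialPartialIso} and for matrix units in \autoref{prop:CharactMnSpatial}. I would therefore characterize $C(X)$ as a maximal abelian subalgebra that is \emph{regular}, in the sense that the spatial partial isometries normalizing it generate the whole crossed product, and that is spatially saturated, i.e.\ contains every spatial idempotent commuting with it. Such a Cartan-type diagonal is the $L^p$-analog of the Cartan subalgebra from the theory of \cas, and topological freeness is exactly what makes it unique. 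Granting this, $\Phi(C(X))$ is a subalgebra of $F^p_\lambda(H,Y)$ with the same abstract properties, hence equals $C(Y)$; the restriction $\Phi|_{C(X)}$ is then an isometric algebra isomorphism $C(X)\to C(Y)$, and Gelfand duality yields a homeomorphism $\theta \colon X \to Y$.

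Next I would extract the cocycle. Realize $F^p_\lambda(H,Y)$ on $L^p(H \times Y)$ via a regular covariant pair. For each $g \in G$, the element $\Phi(u_g^G)$ is an invertible isometry of this $L^p$-space, so \autoref{thm:Lamperti} writes it as $m_f u_\psi$ for a unimodular $f$ and a measure-algebra automorphism $\psi$. Moreover $u_g^G$, like every element of the crossed product, commutes with the right-translation isometries (the crossed-product version of \autoref{prop:LeftRightCommute}); mirroring the computation in \autoref{thm:UnitaryGpFpG}, this commutation forces the $H$-component of $\psi$ to be left multiplication by a group element that varies with the base point, while $\Phi(C(X)) = C(Y)$ forces $\psi$ to cover the base homeomorphism $\theta$. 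Decomposing $\Phi(u_g^G)$ over the partition of $X$ on which the partial transformation it implements agrees with a single $\rho_h$ produces a map $c_H \colon G \times X \to H$ with $\theta(\sigma_g(x)) = \rho_{c_H(g,x)}(\theta(x))$. Topological freeness guarantees that $c_H(g, x)$ is uniquely determined off a meager set, and hence extends to a genuine continuous map; running the same argument for $\Phi^{-1}$ produces $c_G \colon H \times Y \to G$. Together $(\theta, c_G, c_H)$ is precisely a continuous orbit equivalence in the sense of \autoref{df:coe}.

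The main obstacle is the first stage: the intrinsic, isomorphism-invariant characterization of the diagonal $C(X)$. In the $p = 2$ theory this is Renault's Cartan-subalgebra machinery, which leans on adjoints and conditional expectations — tools unavailable for $p \neq 2$. The whole strategy therefore rests on replacing self-adjointness by Lamperti rigidity: one must verify that ``spatial normalizer'' is a faithful substitute for ``normalizer'', and that topological freeness still forces uniqueness of the Cartan diagonal in the $L^p$ setting. A secondary subtlety, in the last stage, is upgrading the a priori only measurable cocycles to genuinely \emph{continuous} ones; this again uses topological freeness together with the fact that the normalizers of $C(Y)$ act by actual (partial) homeomorphisms rather than mere non-singular transformations.
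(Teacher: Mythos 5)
Your overall architecture (recover the diagonal, then use Lamperti plus commutation with right translations to put $\Phi(u_g^G)$ in spatial form, then extract continuous cocycles via topological freeness) is the right shape, and the second and third stages match the strategy of \cite{ChoGarThi_rigidity_2019}. But there is a genuine gap at exactly the point you flag as the ``main obstacle'': the isomorphism-invariant identification of $C(X)$. You propose to characterize $C(X)$ as a maximal abelian, regular, spatially saturated subalgebra and to get uniqueness of such a ``Cartan-type diagonal'' from topological freeness. That uniqueness claim is not established by your argument and is not what topological freeness gives you: topological freeness shows that $C(X)$ \emph{is} such a subalgebra (e.g.\ that it is maximal abelian), not that it is the \emph{only} one. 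Even in the $C^*$-setting, where the full Renault machinery is available, a given algebra typically contains many non-conjugate Cartan subalgebras; this is precisely why the theorem fails for $p=2$, where an abstract isomorphism need not carry $C(X)$ to $C(Y)$. So as written, the first stage does not go through.

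The paper resolves this differently, via the $C^*$-core (\autoref{thm:C*core}): every unital $L^p$-operator algebra $A$ has a \emph{largest} unital $C^*$-subalgebra $\mathrm{core}(A)$, which is abelian when $p\neq 2$, and one shows that $\mathrm{core}(F^p_\lambda(G,X))=C(X)$. Because the core is defined by a maximality property intrinsic to $A$, \emph{any} isometric isomorphism automatically maps core to core --- no uniqueness-of-Cartan argument is needed, and this is where the hypothesis $p\neq 2$ enters decisively (for $p=2$ the core is the whole algebra and carries no information). If you replace your first stage by the core (or by one of its equivalent descriptions, e.g.\ the closed span of the invertible isometries that act as multiplication operators in some, hence every, isometric representation), the rest of your outline can proceed essentially as you describe. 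One further caution for the last stage: you should not expect the cocycle to be merely ``uniquely determined off a meager set'' and then extended; the correct mechanism is that topological freeness forces the partial homeomorphisms implemented by the normalizers of $C(Y)$ to agree with a single $\rho_h$ on each member of a clopen partition, which yields continuity of $c_H$ directly.
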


We will not prove \autoref{thm:RigidityDynSysts}, and we will only explain
how to prove that an isometric isomorphism 
$F^p_\lambda(G,X)\cong F^p_\lambda(H,Y)$ must map the canonical copy of 
$C(X)$ inside $F^p_\lambda(G,X)$ to the canonical copy of $C(Y)$ inside 
$F^p_\lambda(H,Y)$. This is attained using the notion of the \emph{$C^*$-core}
of an $L^p$-operator algebra.

\begin{thm}\label{thm:C*core}
Let $p\in [1,\infty)$, and let $A$ be a unital $L^p$-operator algebra.
Then there is a largest 
unital $C^*$-subalgebra $\mathrm{core}(A)$ of $A$,
called the \emph{$C^*$-core} of $A$. If $p\neq 2$, then $\mathrm{core}(A)$ is abelian, hence of the form $C(X_A)$ for a (uniquely determined) compact Hausdorff
space $X_A$.
\end{thm}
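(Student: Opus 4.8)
\section*{Proof plan}

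The plan is to first pin down what a $C^*$-subalgebra of a bare Banach algebra is, and then to exploit the rigidity of isometries supplied by \autoref{thm:Lamperti}. Recall that in any unital Banach algebra an element $a$ is \emph{Hermitian} if $\|\exp(ita)\|=1$ for all $t\in\mathbb{R}$, and that the Hermitian elements form a closed real-linear subspace $\mathrm{Herm}(A)$. By the Vidav--Palmer circle of ideas, if a closed unital subalgebra $B\subseteq A$ carries an involution making it a $C^*$-algebra in the inherited norm, then its self-adjoint elements are \emph{exactly} its Hermitian elements; hence the involution is uniquely determined by the Banach-algebra structure. Moreover, since unital states on $B$ extend to unital states on $A$ by Hahn--Banach (and restrict back), the numerical range of $b\in B$ is the same computed in $B$ or in $A$, so $\mathrm{Herm}(B)=B\cap\mathrm{Herm}(A)$. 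Thus ``$C^*$-subalgebra'' is an intrinsic property, the canonical involutions of any two of them agree on their overlap, and the self-adjoint part of every $C^*$-subalgebra lies inside $\mathrm{Herm}(A)$.

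The heart of the matter, and the step I expect to be hardest, is the description of Hermitian operators for $p\neq 2$: \emph{if $E=L^p(\mu)$ with $p\neq 2$, then every Hermitian $T\in\B(E)$ is a real multiplication operator $m_g$ with $g\in L^\I(\mu)$ real-valued.} To prove it, fix Hermitian $T$ and consider the one-parameter group $t\mapsto\exp(itT)$, each member of which is invertible with contractive inverse, hence an invertible isometry. By \autoref{thm:Lamperti} we may write $\exp(itT)=m_{f_t}u_{\varphi_t}$. Part~(4) of \autoref{prop:IsometriesLp} gives $\|m_fu_\varphi-m_gu_\psi\|=2$ whenever $\varphi\neq\psi$; since $\exp(itT)\to 1$ in norm as $t\to 0$ and the identity has automorphism part $\id$, we get $\varphi_t=\id$ for small $t$, and then for all $t$ by the group law. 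Hence $\exp(itT)=m_{f_t}$ for every $t$, the $f_t$ form a one-parameter group in $\U(L^\I(\mu))$, and differentiating at $t=0$ yields $T=m_g$, with $g$ real-valued because $|f_t|\equiv 1$.

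Granting this lemma the construction is immediate. For $p\neq 2$ set $\mathrm{core}(A)=\mathrm{Herm}(A)+i\,\mathrm{Herm}(A)$. Every element is a multiplication operator, so, since multiplication operators commute and multiply within $L^\I(\mu)$, this is a commutative self-adjoint closed subalgebra of the $C^*$-algebra $L^\I(\mu)\subseteq\B(E)$, hence a commutative unital $C^*$-subalgebra of $A$; by Gelfand duality it is isometrically $C(X_A)$ for a compact Hausdorff space $X_A$ determined up to homeomorphism. It is the \emph{largest} such, since any $C^*$-subalgebra $B$ satisfies $B=\mathrm{Herm}(B)+i\,\mathrm{Herm}(B)=(B\cap\mathrm{Herm}(A))+i(B\cap\mathrm{Herm}(A))\subseteq\mathrm{core}(A)$ by the first paragraph. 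For $p=2$ the same bookkeeping identifies $\mathrm{Herm}(A)$ with the self-adjoint operators lying in $A$, shows that the canonical involution on any $C^*$-subalgebra is the restriction of the Hilbert-space adjoint, and hence that $\mathrm{core}(A)=A\cap A^*$ is the largest $C^*$-subalgebra---no longer abelian in general. The only genuinely $L^p$-specific input is the Hermitian-operator lemma of the second paragraph; the remaining verifications (closedness of $\mathrm{Herm}(A)+i\,\mathrm{Herm}(A)$, from $\|m_g\|_\I\le\|m_{g+ih}\|_\I$, and the $C^*$-identity, inherited from $L^\I(\mu)$) are routine.
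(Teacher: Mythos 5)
Your proof is correct and follows exactly the route the paper indicates (the paper omits the argument but lists the descriptions via $\mathrm{Herm}(A)$, via elements represented in $L^\I(\mu)_{\mathbb{R}}$, and via multiplication-operator isometries, whose equivalence is precisely your key lemma). The identification of Hermitian operators on $L^p(\mu)$, $p\neq 2$, as real multiplication operators via the one-parameter group $e^{itT}$, Lamperti's theorem, and part~(4) of \autoref{prop:IsometriesLp} is the intended technical core, and your bookkeeping around it (intrinsic characterization of $C^*$-subalgebras, maximality, the $p=2$ case) is sound.
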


The proof that such an algebra exists is a bit technical for $p\neq 2$, but we give three
other ways of identifying it:
\bi\item Given any unital 
isometric representation $\varphi\colon A\to\B(L^p(\mu))$ of 
$A$ on an $L^p$-space $L^p(\mu)$, the set
\[A_{\mathrm{h}}=\{a\in A\colon \varphi(a)\in L^\I(\mu)_{\mathbb{\mathbb{R}}}\}\subseteq A\]
can be shown to be independent of $\varphi$, and is closed under
multiplication and pointwise complex conjugation (as a subset of $L^\I(\mu)$). 
In particular, the image of $A_{\mathrm{h}}+iA_{\mathrm{h}}$ 
under $\varphi$ is a norm-closed self-adjoint
subalgebra of $L^\I(\mu)$, and is therefore a commutative \ca. This is the $C^*$-core of $A$.
\item Given any unital 
isometric representation $\varphi\colon A\to\B(L^p(\mu))$ of 
$A$ on an $L^p$-space $L^p(\mu)$, the subgroup
\[\mathcal{V}(A)=\{u\in \mathrm{Isom}(A)\colon \varphi(u)\in L^\I(\mu)\}\subseteq
 \mathrm{Isom}(A)
\]
can be shown to be independent of $\varphi$. It is clearly commutative because
$L^\I(\mu)$ is commutative. Then $C(X_A)$ is the closed linear span of $\mathcal{V}(A)$.
\item Set 
\[\mathrm{Herm}(A)=\{a\in A\colon \|e^{ita}\|=1 \mbox{ for all } t\in\mathbb{R}\}.\]
Then $\mathrm{Herm}(A)+i\mathrm{Herm}(A)=C(X_A)$. (In fact, $\mathrm{Herm}(A)$ agrees
with the set $A_{\mathrm{h}}$ from the first bullet point above.)
\ei

\begin{eg}
Let $\mu$ be a localizable measure and let $p\neq 2$. Then
the core of $\B(L^p(\mu))$ is $L^\I(\mu)$. In particular, $X_{M_n^p}=\{1,\ldots,n\}$.
\end{eg}

The algebra $C(X_A)$ plays the role that \emph{maximal abelian subalgebras}
play in the context of \ca s, with two differences: it is unique (an advantage),
and it may be very small (a disadvantage). It may not even
be maximal abelian. For example:

\begin{eg}
Let $G$ be a discrete group and let $p\neq 2$. Then $X_{F^p_\lambda(G)}=\{\ast\}$. This follows from the second description of $C(X_A)$ given above: indeed, \autoref{thm:UnitaryGpFpG} shows that the only invertible isometries
in $F^p_\lambda(G)$ which are purely multiplication operators are the 
multiples of the unit.
\end{eg}

The following result clarifies how $C(X)$ is abstractly identified inside
$F^p_\lambda(G,X)$.

\begin{thm}
Let $G$ be a discrete group, let $X$ be a compact Hausdorff space, let 
$G\curvearrowright X$ be an action, and let $p\in [1,\I)\setminus\{2\}$.
Then the $C^*$-core of $F^p_\lambda(G,X)$ is $C(X)$.
\end{thm}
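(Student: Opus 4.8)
The plan is to compute the $C^*$-core using the first of the three characterisations of $\mathrm{core}(A)$ listed after \autoref{thm:C*core}, applied to a concrete isometric regular representation. Fix a faithful multiplication representation $\pi_0\colon C(X)\to\B(E_0)$ on an $L^p$-space $E_0=L^p(Y,\nu)$, so that $\|\pi_0(c)\|=\|c\|_\I$, chosen large enough (a sufficiently big direct sum of regular covariant pairs is again of regular form) that the associated regular representation $\Phi\colon F^p_\lambda(G,X)\to\B(E)$ on $E=L^p(G,E_0)\cong L^p(\mu)$ is isometric; here $\mu$ is the product of counting measure on $G$ with $\nu$, and $\Phi$ comes from the regular covariant pair $(\pi,u)$ with $\pi(c)\xi(s)=\pi_0(\alpha_{s^{-1}}(c))\xi(s)$. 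For $f\in C(X)$ the operator $\Phi(f)=\pi(f)$ acts on the fiber over $s\in G$ as $\pi_0(\alpha_{s^{-1}}(f))$, hence is a multiplication operator, i.e. lies in $L^\I(\mu)$; taking $f$ real-valued shows $C_{\mathbb R}(X)\subseteq A_{\mathrm h}$ and therefore $C(X)=C_{\mathbb R}(X)+iC_{\mathbb R}(X)\subseteq\mathrm{core}(F^p_\lambda(G,X))$. It remains to prove the reverse inclusion.

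So let $a\in A_{\mathrm h}$, meaning $\Phi(a)\in L^\I(\mu)_{\mathbb R}$ is multiplication by a real function $m$ on $G\times Y$; equivalently $\Phi(a)$ is \emph{diagonal} across the $G$-fibers. The key tool is the canonical conditional expectation $E\colon F^p_\lambda(G,X)\to C(X)$. On the dense subalgebra $C_c(G,C(X))\subseteq L^1(G,C(X))$ I define $E(b)=b(1)\in C(X)$. Writing $\iota_t\colon E_0\to E$ for the isometric inclusion of the fiber over $t$ and $q_t\colon E\to E_0$ for the contractive evaluation $q_t\xi=\xi(t)$, a direct computation with the regular covariant pair gives $q_t\,\Phi(b)\,\iota_t=\pi_0(\alpha_{t^{-1}}(b(1)))$ for every $t\in G$. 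Since $\pi_0$ is isometric and $\alpha_{t^{-1}}$ preserves the supremum norm, this yields $\|E(b)\|_\I=\|\pi_0(b(1))\|=\|q_1\Phi(b)\iota_1\|\le\|\Phi(b)\|=\|b\|$, so $E$ is contractive and extends to a contractive projection onto $C(X)$ restricting to the identity on $C(X)$.

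Now approximate $a$ in norm by $a_i\in C_c(G,C(X))$, say $a_i=\sum_s f^{(i)}_s u_s$ with $f^{(i)}_s\in C(X)$. The same fiber computation gives, for each fixed $t\in G$,
\[q_t\,\Phi(a_i)\,\iota_t=\pi_0\!\left(\alpha_{t^{-1}}(f^{(i)}_1)\right)\in\B(E_0),\]
while $q_t\,\Phi(a)\,\iota_t$ is multiplication by $m(t,\cdot)\in L^\I(Y)$. Because $E$ is contractive, $f^{(i)}_1=E(a_i)\to E(a)=:f\in C(X)$ uniformly; on the other hand norm-convergence $\Phi(a_i)\to\Phi(a)$ forces $q_t\Phi(a_i)\iota_t\to q_t\Phi(a)\iota_t$, and comparing the two limits gives $m(t,\cdot)=\alpha_{t^{-1}}(f)$ as elements of $L^\I(Y)$, for every $t$. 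Thus $\Phi(a)$ and $\Phi(f)$ act identically on every fiber, so $\Phi(a)=\Phi(f)$; as $\Phi$ is isometric, $a=f\in C(X)$. Hence $A_{\mathrm h}\subseteq C_{\mathbb R}(X)$, and combining the two inclusions gives $\mathrm{core}(F^p_\lambda(G,X))=C(X)$, as claimed.

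I expect the main obstacle to be the regularity upgrade in the third paragraph: the hypothesis only places $\Phi(a)$ in $L^\I(\mu)$, i.e. provides a bounded \emph{measurable} symbol $m$, and the real content is to promote this to a genuinely \emph{continuous} function on $X$. This is exactly what contractivity of the conditional expectation $E$ supplies, and that contractivity rests on the identity $\|h\|_\I=\|\pi_0(h)\|$ for multiplication operators on $L^p(Y)$, together with the fact that $\pi_0$ can be chosen isometric. A secondary technical point, which I would settle at the outset, is arranging a \emph{single} regular representation $\Phi$ that is isometric for the reduced norm while preserving the multiplication-operator structure used throughout.
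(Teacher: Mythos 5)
The paper does not actually prove this theorem (it is imported from \cite{ChoGarThi_rigidity_2019}), so there is no in-text argument to compare against. On its own terms, your strategy --- a contractive conditional expectation onto $C(X)$ built from the diagonal compressions $q_t\,\Phi(\cdot)\,\iota_t$ of a regular representation, combined with the observation that an element of $A_{\mathrm h}$ acts block-diagonally across the $G$-fibers and is therefore determined by those compressions --- is the right one, and the fiber identity $q_t\,\Phi(b)\,\iota_t=\pi_0(\alpha_{t^{-1}}(b(1)))$ is correct.

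There is, however, one genuine gap, and it sits exactly where you flagged a ``secondary technical point'': arranging a regular representation $\Phi$ that is simultaneously isometric for the reduced norm and induced from a \emph{multiplication} representation $\pi_0$ of $C(X)$. Your parenthetical fix --- take a sufficiently large direct sum of regular covariant pairs --- does produce an isometric regular representation, but its fiber representation is then a direct sum of \emph{arbitrary} contractive representations of $C(X)$ on $L^p$-spaces, and these need not act by multiplication operators. Without the multiplication structure both halves of your argument break: $\Phi(f)$ for $f\in C_{\mathbb R}(X)$ need not land in $L^\I(\mu)_{\mathbb R}$, and the identity $\|\pi_0(b(1))\|=\|b(1)\|_\I$ underlying the contractivity of $E$ is lost. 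What you actually need is that the reduced crossed product norm is already computed by regular pairs induced from (a separating family of) multiplication representations of $C(X)$ --- equivalently, independence of the reduced norm from the choice of isometric nondegenerate inducing representation. This is true, but it is a nontrivial input that must be cited or proved; it does not follow from enlarging the direct sum. Two smaller remarks: the forward inclusion $C(X)\subseteq\mathrm{core}(F^p_\lambda(G,X))$ can be obtained without any of this via the third description after \autoref{thm:C*core}, since $\|e^{itf}\|\leq\|e^{itf}\|_\I=1$ for real $f$ because the embedding $C(X)\to F^p_\lambda(G,X)$ is contractive; and at the end you should say a word about why $f=E(a)$ is real-valued (it follows because $\pi_0(f)=q_1\Phi(a)\iota_1$ is multiplication by a real function and $\pi_0$ is faithful in the appropriate separating sense).
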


We close this section with some comments on what information about $G$ and 
$H$ one can deduce from knowing that they admit two continuously orbit 
equivalent actions. One of course does not expect to get an isomorphism of 
the groups, and in fact the type of equivalence one gets is really very weak
(although strong enough to give some interesting applications; see the following
section).

\begin{df}\label{df:qiGroups}
Let $G$ and $H$ be finitely generated groups, endowed with their word metrics $d_G$ and $d_H$. 
We say that $G$ and $H$ are
\emph{quasi-isometric} if there exist a function $\varphi\colon G\to H$ and a constant $K>0$ such that
\[K^{-1}d_G(g,g')-K \leq d_H(\varphi(g),\varphi(g'))\leq Kd_G(g,g')+K\]
for all $g,g'\in G$.
\end{df}

\begin{rem}\label{rem:COEqi}
By~Theorem~3.2 in~\cite{MedSauTho_cantor_2017},
if $G\curvearrowright X$ and $H\curvearrowright Y$ are continuously orbit equivalent, then $G$ is quasi-isometric to $H$. 
\end{rem}

\section{Tensor products of Cuntz algebras}
In this final section, also based on \cite{ChoGarThi_rigidity_2019}, 
we answer a question of Phillips regarding the existence of an isometric
isomorphism between $\mathcal{O}_2^p$ and 
$\mathcal{O}_2^p\otimes \mathcal{O}_2^p$
for $p\in [1,\I)\setminus\{2\}$. 

\begin{thm}\label{thm:IsomCtzAlgs}
Let $p\in [1,\infty)\setminus\{2\}$, let $n,m\in\N$.
Then there is an isometric isomorphism
\[\underbrace{\Otp\otimes_p\cdots\otimes_p\Otp}_\text{$n$} \cong \underbrace{\Otp\otimes_p\cdots\otimes_p\Otp}_\text{$m$} \]
if and only if $n=m$.
\end{thm}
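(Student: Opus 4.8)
The plan is to prove the nontrivial direction: an isometric isomorphism forces $n=m$. The converse is immediate, since for $n=m$ the two algebras are literally equal. Write $A_n=\Otp\otimes_p\cdots\otimes_p\Otp$ ($n$ factors) and suppose $\Phi\colon A_n\to A_m$ is an isometric isomorphism. The first thing I would observe is that $K$-theory is useless here: both $K_0(\Otp)$ and $K_1(\Otp)$ vanish, so no $K$-theoretic (or homological) invariant can detect the number of tensor factors. Indeed this is exactly why the $C^*$-statement degenerates, as $\Ot\otimes\Ot\cong\Ot$. The whole point is to replace $K$-theory by a \emph{spatial} invariant that is available only because $p\neq 2$.

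First I would realize each $A_n$ as the spatial $L^p$-operator algebra of an explicit étale groupoid. Since $\Otp$ is by construction the spatial $L^p$-algebra of the Cuntz groupoid $\mathcal{G}$ (the Deaconu--Renault groupoid of the full one-sided $2$-shift on the Cantor set $\{1,2\}^{\N}$), its $n$-fold spatial tensor power is the $L^p$-algebra of the product groupoid $\mathcal{G}^{\times n}$. This groupoid is topologically principal, has a Cantor unit space, and carries a canonical degree cocycle $\mathcal{G}^{\times n}\to\Z^n$ recording the $n$ independent "shift directions", so that its coarse model is $\Z^n$. This is the groupoid-theoretic counterpart of writing a crossed product as in \autoref{thm:RigidityDynSysts}.

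The heart of the proof, and the step I expect to be the main obstacle, is upgrading the abstract isomorphism $\Phi$ to a spatial, hence geometric, one. Using \autoref{thm:C*core}, $\Phi$ restricts to an isomorphism of the abelian $C^*$-cores $\mathrm{core}(A_n)\cong\mathrm{core}(A_m)$, hence to a homeomorphism of their spectra, identifying the two unit spaces. With the unit spaces matched, I would run a Lamperti-type argument exactly as in the proofs of \autoref{thm:UnitaryGpFpG} and \autoref{thm:StructureHomsGpAlgs}: because $p\neq 2$, every invertible isometry of $A_m$ is rigidly described by \autoref{thm:Lamperti}, forcing the normalizers of the core to be spatially implemented. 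Tracking how $\Phi$ transports these normalizers then yields a genuine \emph{topological groupoid isomorphism} $\mathcal{G}^{\times n}\cong\mathcal{G}^{\times m}$, together with the induced continuous orbit equivalence of the underlying dynamics. This $L^p$-reconstruction has no analog at $p=2$ (where Elliott's isomorphism shows the groupoid is \emph{not} remembered), and it is precisely here that one must exploit the asymmetry of the $L^p$-ball encoded in Lamperti's theorem.

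Finally I would extract the numerical invariant. A groupoid isomorphism $\mathcal{G}^{\times n}\cong\mathcal{G}^{\times m}$ carries orbits to orbits and intertwines the degree cocycles up to a coboundary, so the resulting continuous orbit equivalence produces, by the mechanism of \autoref{rem:COEqi}, a \emph{quasi-isometry} between the coarse models $\Z^n$ and $\Z^m$ in the sense of \autoref{df:qiGroups}. Since $\Z^n$ and $\Z^m$ are quasi-isometric if and only if $n=m$ (their growth is polynomial of degrees $n$ and $m$, equivalently their asymptotic dimensions are $n$ and $m$), we conclude that $n=m$, completing the proof.
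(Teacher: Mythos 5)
Your overall architecture (realize the algebras dynamically, use the $C^*$-core plus Lamperti to upgrade an isometric isomorphism to an orbit equivalence, then finish with a quasi-isometry invariant) is the same as the paper's, and your remarks about the converse and the uselessness of $K$-theory are correct. But there are two concrete problems with the route you chose. First, you model $\Otp$ on the Deaconu--Renault groupoid of the one-sided $2$-shift. That groupoid is \emph{not} the transformation groupoid of a group action, so neither \autoref{thm:RigidityDynSysts} nor \autoref{rem:COEqi} applies to it as stated: both are formulated for topologically free actions of countable discrete groups on compact spaces, and the Medynets--Sauer--Thom mechanism in \autoref{rem:COEqi} produces a quasi-isometry \emph{of the acting groups}. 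The step the paper supplies, and which your argument is missing, is \autoref{thm:CtzAlgsCrossedProd}: an isometric identification $\Otp\cong F^p_\lambda(\Z_2\ast\Z_3,X)$ for an explicit essentially free action on the Cantor set. It is exactly this realization that puts the problem inside the scope of the group-action rigidity theorem; the $n$-fold tensor power then becomes a crossed product by $(\Z_2\ast\Z_3)^n$, and the final invariant is the asymptotic dimension of $(\Z_2\ast\Z_3)^n$, which equals $n$.

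Second, even if one had a groupoid-level version of the rigidity theorem, your extraction of the numerical invariant is flawed. You assert that a groupoid isomorphism $\mathcal{G}^{\times n}\cong\mathcal{G}^{\times m}$ ``intertwines the degree cocycles up to a coboundary''; there is no reason for an abstract isomorphism to respect the gauge cocycle, and the whole point of reconstruction theorems is that one recovers only the groupoid, not any preferred cocycle on it. Worse, the coarse model of $\mathcal{G}$ is not $\Z$: the kernel of the degree cocycle is the tail-equivalence (AF) subgroupoid, whose orbits are unbounded in the groupoid word metric, so the cocycle is not a quasi-isometry on orbits. Consistently with the paper's picture, the orbits of $\mathcal{G}$ are quasi-isometric to $\Z_2\ast\Z_3$, i.e.\ to a tree, not to a line. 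Your conclusion survives numerically because products of $n$ trees and $m$ trees are quasi-isometric only when $n=m$ (again by asymptotic dimension), but the argument as written does not establish this, and the reduction to ``$\Z^n$ versus $\Z^m$'' is incorrect.
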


The first step in proving the previous theorem is identifying 
$\Otp$ as a crossed product by an essentially free topological action.
This is done in the following proposition.

\begin{prop}\label{thm:CtzAlgsCrossedProd}
Let $p\in [1,\I)$. Then there exist an essentially free action
of $\Z_2\ast\Z_{3}$ on the Cantor set $X$ and an isometric isomorphism
\[F^p_\lambda(\Z_2\ast\Z_{3}, X)\cong \Otp.\]
\end{prop}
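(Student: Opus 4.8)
The plan is to realize both $\Otp$ and $F^p_\lambda(\Z_2\ast\Z_3,X)$ as \emph{reduced $L^p$-operator algebras of étale groupoids}, and then to exhibit an isomorphism of the underlying groupoids. By functoriality of the reduced $L^p$-groupoid algebra construction (see \cite{GarLup_representations_2017}), an isomorphism of topological groupoids induces an isometric isomorphism of the associated $L^p$-operator algebras, so this reduces the analytic statement to a purely combinatorial identification of two groupoids.

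First I would fix the dynamical model. Let $G=\Z_2\ast\Z_3=\langle a\rangle\ast\langle b\rangle$ with $a^2=b^3=1$, and let $X=\partial G$ be its Gromov boundary, realized concretely as the set of infinite reduced words $g_1g_2g_3\cdots$ with $g_i\in\{a\}\cup\{b,b^2\}$ and consecutive letters lying in different free factors. Since the Bass--Serre tree of $G$ has vertices of degree $2$ (the $\Z_2$-cosets) and degree $3$ (the $\Z_3$-cosets), each passage through a $\Z_3$-vertex contributes a binary branching and each passage through a $\Z_2$-vertex none; hence $X$ is a Cantor set. The group $G$ acts on $X$ by left multiplication followed by reduction, and this action is essentially free: a nontrivial torsion element rotates the subtrees about a fixed vertex and fixes no end, while a hyperbolic element fixes only its two endpoints, so in either case the fixed-point set is finite and thus has empty interior. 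The transformation groupoid $G\ltimes X$ is étale with unit space $X$, and its reduced $L^p$-operator algebra coincides with $F^p_\lambda(G,X)$ --- as one checks by comparing the regular covariant pairs of the defining construction with the regular representation of the groupoid --- with the canonical copy of $C(X)$ sitting inside as the $C^*$-core.

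Next I would identify $\Otp$ with the reduced $L^p$-operator algebra of the Cuntz (Deaconu--Renault) groupoid $\mathcal G_2$ of the full one-sided $2$-shift,
\[
\mathcal G_2=\bigl\{(x,k-l,y): x,y\in\{1,2\}^{\N},\ k,l\ge 0,\ \sigma^k x=\sigma^l y\bigr\},
\]
using the spatial representation of $L_2$ from \autoref{eg:SpatialRep} to match the generators $s_j,t_j$ with the standard generators of the groupoid algebra. The heart of the proof is then the groupoid isomorphism $G\ltimes X\cong\mathcal G_2$. For this I would build a homeomorphism $\phi\colon X\to\{1,2\}^{\N}$ recording the successive binary choices made at the $\Z_3$-vertices, and observe that two reduced words lie in the same $G$-orbit exactly when they are cofinal (left multiplication alters only a prefix), which under $\phi$ corresponds to tail equivalence of sequences. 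Matching the $\Z$-grading of $\mathcal G_2$ (net number of shifts) with a continuous $\Z$-valued cocycle on $G\ltimes X$ measuring net displacement along the tree then upgrades this orbit equivalence to a full groupoid isomorphism.

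The main obstacle is exactly this last combinatorial matching. The two vertex types have different degrees ($2$ versus $3$), so a single binary shift on $\{1,2\}^{\N}$ corresponds to advancing two letters in the reduced word, and one must verify that the resulting bijection is a homeomorphism respecting the $\Z$-coordinate and the composition of the groupoids --- in particular that the displacement cocycle is well defined, continuous, and compatible with the shift cocycle. Essential freeness of the action, established in the fixed-point analysis above, is what both drives this identification and supplies the hypothesis needed to apply \autoref{thm:RigidityDynSysts} later. Once the groupoid isomorphism is in hand, the isometric isomorphism $F^p_\lambda(\Z_2\ast\Z_3,X)\cong\Otp$ follows at once from functoriality.
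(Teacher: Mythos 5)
Your overall strategy---realize both algebras as reduced $L^p$-operator algebras of \'etale groupoids via \cite{GarLup_representations_2017} and then identify the transformation groupoid of the boundary action with the Cuntz groupoid $\mathcal G_2$---is exactly the route taken in \cite{ChoGarThi_rigidity_2019}, which is where the paper defers the omitted construction. Your model of $X$ (infinite reduced words, i.e.\ the ends of the Bass--Serre tree, with $G=\Z_2\ast\Z_3$ acting by left multiplication and reduction) agrees with the action the paper writes down, and your essential-freeness argument (elliptic elements permute the subtrees at their fixed vertex and fix no end; hyperbolic elements fix exactly two ends) is correct. Up to this point you have reproduced, in cleaner language, everything the paper's own proof actually contains.

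The gap lies in the one step the paper omits and you attempt: the groupoid isomorphism $G\ltimes X\cong\mathcal G_2$. The map $\phi$ ``recording the successive binary choices at the $\Z_3$-vertices'' is not a homeomorphism as described: an infinite reduced word is determined by that binary sequence \emph{together with} the free factor of its first letter, so $X$ decomposes into two clopen copies of $\{1,2\}^{\N}$ (words beginning with $a$ and words beginning with $b^{\pm1}$) and $\phi$ is two-to-one. One must choose an identification that merges these two copies compatibly with the groupoid structure, not merely with the topology. Moreover, matching the orbit relations (cofinality of reduced words versus tail equivalence) together with a $\Z$-valued displacement cocycle does not yet yield a groupoid isomorphism, precisely because the action is only essentially free: at the fixed ends of hyperbolic elements the transformation groupoid has isotropy isomorphic to $\Z$ (generated by the primitive hyperbolic element along the axis), just as $\mathcal G_2$ has isotropy $d\Z$ at eventually periodic points of minimal period $d$, and your bijection must match these groups---with the period counted in $\Z_3$-letters, because of the two-letters-per-shift discrepancy you yourself note. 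These points are all fixable and are carried out in \cite{ChoGarThi_rigidity_2019}, but as written the central identification is asserted rather than proved.
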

\begin{proof}
We only describe the action, and omit the construction of the isomorphism.
We identify the Cantor set $X$ as
\[X=\left\{x\colon \N\to \Z_2\ast\Z_{3} \ \middle\vert\ \ \begin{array}{@{}l@{}}
  \text{for $k\in \N$ there is $j_k\in \{2,3\}$ }\\
  \text{such that $x(k)\in \Z_{j_k}\subseteq \Z_2\ast\Z_{3}$}\\
  \text{and $j_k\neq j_{k+1}$ for all $k\in\N$}\\
  \end{array} \right\}.\]

We denote by $a\in \Z_2$ the nontrivial element, and by $b\in\Z_{3}$ the canonical generator of order $3$.
Define an action $\Z_2\ast\Z_{3}\curvearrowright X$ by
\begin{equation*}
(ax)(k)=\left\{\begin{array}{lr}
        x(k+1) & \text{if } x(0)=a;\\
        x(k) & \text{if } j_0=2, x(0)\neq a, \text{ and } k>0;\\
	ax(0) & \text{if } j_0=2, x(0)\neq a, \text{ and } k=0;\\
	x(k-1) & \text{if } j_0\neq 2, \text{ and } k>0;\\
	a & \text{if } j_0\neq 2, \text{ and } k=0,
        \end{array}\right.
\end{equation*}
and
\begin{equation*}
(bx)(k)=\left\{\begin{array}{lr}
        x(k+1) & \text{if } x(0)=b;\\
        x(k) & \text{if } j_0=2, x(0)\neq b^2, \text{ and } k>0;\\
	bx(0) & \text{if } j_0=2, x(0)\neq b^2, \text{ and } k=0;\\
	x(k-1) & \text{if } j_0\neq 2, \text{ and } k>0;\\
	b & \text{if } j_0\neq 2, \text{ and } k=0.
        \end{array}\right.
\end{equation*}
One checks that $a$ acts via a homeomorphism of order 2, and that $b$ acts via a
homeomorphism of order $3$, so that the previous equations really do define
an action of $\Z_2\ast\Z_3$ on $X$. We omit the details.
\end{proof}

It is not difficult to deduce from the previous proposition that
$\underbrace{\Otp\otimes_p\cdots\otimes_p\Otp}_\text{$n$}$ is isometrically
isomorphic to the crossed product of an essentially free 
action of $(\Z_2\ast\Z_3)^n$ on the 
Cantor space. We are now ready to finish the proof of \autoref{thm:IsomCtzAlgs}.
\newline

\emph{Proof of \autoref{thm:IsomCtzAlgs}.}
Suppose that there exists an isometric isomorphism
\[\underbrace{\Otp\otimes_p\cdots\otimes_p\Otp}_\text{$n$} \cong \underbrace{\Otp\otimes_p\cdots\otimes_p\Otp}_\text{$m$}.\]
By the comments above, there are actions of $(\Z_2\ast\Z_3)^n$
and $(\Z_2\ast\Z_3)^m$ on the Cantor space $X$ such that 
$F^p_\lambda((\Z_2\ast\Z_3)^n,X)\cong F^p_\lambda((\Z_2\ast\Z_3)^m,X)$.
By \autoref{thm:RigidityDynSysts}, this implies that the underlying
dynamical systems are continuously orbit equivalent. 
By \autoref{rem:COEqi}, this implies that 
$(\Z_2\ast\Z_3)^n$ is quasi-isometric to $(\Z_2\ast\Z_3)^m$. Finally, it can be shown, using asymptotic dimension, for groups that such a quasi-isometry exists if and only if $n=m$. This finishes
the proof.


\begin{thebibliography}{10}

\bibitem{AraCor_tensor_2013}
{\sc P.~Ara and G.~Corti\~{n}as}, {\em Tensor products of {L}eavitt path
  algebras}, Proc. Amer. Math. Soc., 141 (2013), pp.~2629--2639.

\bibitem{BlePhi_operator_2018}
{\sc D.~Blecher and N.~C. Phillips}, {\em {${L^p}$ operator algebras with
  approximate identities I}},  (2018).
\newblock Preprint, arXiv:1802.04424.

\bibitem{ChoGarThi_rigidity_2019}
{\sc Y.~Choi, E.~Gardella, and H.~Thiel}, {\em {Rigidity results for
  {$L^p$}-operator algebras and applications}}, (2019).
\newblock Preprint, arXiv:1909.03612.

\bibitem{CorRod_operator_2017}
{\sc G.~Corti{\~{n}}as and M.~E. Rodr\'iguez}, {\em {$L^p$-operator algebras
  associated with oriented graphs}}, J. Operator Theory 81 (2019), pp.~225--254.

\bibitem{Cow_predual_1998}
{\sc M.~Cowling}, {\em The predual of the space of convolutors on a locally
  compact group}, Bull. Austral. Math. Soc., 57 (1998), pp.~409--414.

\bibitem{Cun_simple_1977}
{\sc J.~Cuntz}, {\em Simple {$C\sp*$}-algebras generated by isometries}, Comm.
  Math. Phys., 57 (1977), pp.~173--185.

\bibitem{DawSpr_convoluters_2019}
{\sc M.~Daws and N.~Spronk}, {\em On convoluters on {$L^p$}-spaces}, Studia
  Math., 245 (2019), pp.~15--31.

\bibitem{Der_property_2009}
{\sc A.~Derighetti}, {\em A property of {$B_p(G)$}. {A}pplications to
  convolution operators}, J. Funct. Anal., 256 (2009), pp.~928--939.

\bibitem{Der_book_2011}
\leavevmode\vrule height 2pt depth -1.6pt width 23pt, {\em Convolution
  operators on groups}, vol.~11 of Lecture Notes of the Unione Matematica
  Italiana, Springer, Heidelberg; UMI, Bologna, 2011.

\bibitem{DerFilMon_ideal_2004}
{\sc A.~Derighetti, M.~Filali, and M.~S. Monfared}, {\em On the ideal structure
  of some {B}anach algebras related to convolution operators on {$L^p(G)$}}, J.
  Funct. Anal., 215 (2004), pp.~341--365.

\bibitem{Fre_measure_2004}
{\sc D.~H. Fremlin}, {\em Measure theory. {V}ol. 3}, Torres Fremlin,
  Colchester, 2004.
\newblock Measure algebras, Corrected second printing of the 2002 original.

\bibitem{Gar_thesis_2015}
{\sc E.~Gardella}, {\em {Compact group actions on {C}*-algebras:
  classification, non-classifiability, and crossed products and rigidity
  results for $L^p$-operator algebras}}, ProQuest LLC, Ann Arbor, MI, 2015.
\newblock Thesis (Ph.D.)--University of Oregon.

\bibitem{GarLup_nonclassifiability_2016}
{\sc E.~Gardella and M.~Lupini}, {\em {Nonclassifiability of {UHF}
  {$L^p$}-operator algebras}}, Proc. Amer. Math. Soc., 144 (2016),
  pp.~2081--2091.

\bibitem{GarLup_representations_2017}
\leavevmode\vrule height 2pt depth -1.6pt width 23pt, {\em Representations of
  {\'e}tale groupoids on {$L^p$}-spaces}, Adv. Math., 318 (2017), pp.~233--278.

\bibitem{GarThi_banach_2015}
{\sc E.~Gardella and H.~Thiel}, {\em {Banach algebras generated by an
  invertible isometry of an {$L^p$}-space}}, J. Funct. Anal., 269 (2015),
  pp.~1796--1839.

\bibitem{GarThi_group_2015}
\leavevmode\vrule height 2pt depth -1.6pt width 23pt, {\em {Group algebras
  acting on {$L^p$}-spaces}}, J. Fourier Anal. Appl., 21 (2015),
  pp.~1310--1343.

\bibitem{GarThi_quotients_2016}
\leavevmode\vrule height 2pt depth -1.6pt width 23pt, {\em {Quotients of
  {B}anach algebras acting on {$L^p$}-spaces}}, Adv. Math., 296 (2016),
  pp.~85--92.

\bibitem{GarThi_extending_2017}
\leavevmode\vrule height 2pt depth -1.6pt width 23pt, {\em {Extending
  representations of Banach algebras to their biduals}}, Math. Z., to apper (2019).

\bibitem{GarThi_isomorphisms_2018}
\leavevmode\vrule height 2pt depth -1.6pt width 23pt, {\em {Isomorphisms of
  Algebras of Convolution Operators}},  (2018).
\newblock Preprint, arXiv:1809.01585.

\bibitem{GarThi_representations_2019}
\leavevmode\vrule height 2pt depth -1.6pt width 23pt, {\em Representations of
  {$p$}-convolution algebras on {$L^q$}-spaces}, Trans. Amer. Math. Soc., 371
  (2019), pp.~2207--2236.

\bibitem{Her_theory_1971}
{\sc C.~Herz}, {\em The theory of {$p$}-spaces with an application to
  convolution operators}, Trans. Amer. Math. Soc., 154 (1971), pp.~69--82.

\bibitem{Lam_isometries_1958}
{\sc J.~Lamperti}, {\em On the isometries of certain function-spaces}, Pacific
  J. Math., 8 (1958), pp.~459--466.

\bibitem{MedSauTho_cantor_2017}
{\sc K.~Medynets, R.~Sauer, and A.~Thom}, {\em Cantor systems and
  quasi-isometry of groups}, Bull. Lond. Math. Soc., 49 (2017), pp.~709--724.

\bibitem{NeuRun_column_2009}
{\sc M.~Neufang and V.~Runde}, {\em Column and row operator spaces over {${\rm
  QSL}_p$}-spaces and their use in abstract harmonic analysis}, J. Math. Anal.
  Appl., 349 (2009), pp.~21--29.

\bibitem{Phi_analogs_2012}
{\sc N.~C. Phillips}, {\em {Analogs of Cuntz algebras on $L^p$ spaces}},
  (2012).
\newblock Preprint, arXiv:1201.4196.

\bibitem{Phi_crossed_2013}
\leavevmode\vrule height 2pt depth -1.6pt width 23pt, {\em {Crossed products of
  $L^p$ operator algebras and the K-theory of Cuntz algebras on $L^p$ spaces}},
   (2013).
\newblock Preprint, arXiv:1309.6406.

\bibitem{PhiVio_classification_2017}
{\sc N.~C. Phillips and M.~G. Viola}, {\em {Classification of $L^p$ AF
  algebras}},  (2017).
\newblock Preprint, arXiv:1707.09257.

\bibitem{Ror_short_1994}
{\sc M.~R{\o}rdam}, {\em A short proof of {E}lliott's theorem:
  {${\mathcal{O}}_2\otimes{\mathcal{O}}_2\cong{\mathcal{O}}_2$}}, C. R. Math.
  Rep. Acad. Sci. Canada, 16 (1994), pp.~31--36.

\bibitem{Tza_remarks_1969}
{\sc L.~Tzafriri}, {\em Remarks on contractive projections in
  {$L_{p}$}-spaces}, Israel J. Math., 7 (1969), pp.~9--15.

\end{thebibliography}

\end{document}